\documentclass[twoside,10pt]{article}
\usepackage{amsmath,amsthm,amssymb,mathrsfs}
\usepackage{paralist,graphicx,psfrag}
\usepackage[hidelinks]{hyperref}
\allowdisplaybreaks[2]
\numberwithin{equation}{section}

\def\t{\theta}
\def\T{\Theta}
\def\k{\kappa}

\def\g{\gamma}
\def\d{\delta}

\def\z{\zeta}

\def\di{\displaystyle}

\newtheorem{theorem}{Theorem}[section]
\newtheorem{lemma}[theorem]{Lemma}
\newtheorem{proposition}{Proposition}[section]

\newtheorem{remark}{Remark}
\title{Global stability    of  superposition of viscous contact wave and rarefaction waves for compressible Navier-Stokes system  with temperature-dependent transport coefficients and large data}
\author{Hongyu Wang$^a$, Rong Zhang$^b$ \thanks{
		Email addresses: {hyuwang\_a@163.com} (H. Y. Wang),   rzhang0921@gmail.com (R.
		Zhang).} \\[3mm]  a. School of Mathematics and Computer Science,\\ Nanchang University,  Nanchang 330031, P. R. China; \\
	b. School of Mathematics and Computer Science, \\Nanchang University,  Nanchang 330031, P. R. China.}
\date{}
\begin{document}
\maketitle
\begin{abstract}
We study the Cauchy problem for the one-dimensional full compressible
Navier--Stokes equations with viscosity
$\mu(\theta)=\tilde\mu\theta^\alpha$ and heat conductivity
$\kappa(\theta)=\tilde\kappa\theta^\beta$. For each fixed $\beta\ge0$,  we prove   the global stability of  the combination of a viscous contact wave with rarefaction waves, provided that the
viscosity exponent $\alpha$ and the total wave strength are sufficiently
small. The initial perturbation may be large in $H^1$, and
the specific volume and temperature are assumed to have positive
initial lower bounds. The solution  remains uniformly bounded in $H^1$ relative to
the wave, and converges uniformly to that wave as time tends to
infinity. We show that both the specific
volume and the temperature admit  time-uniform lower and upper bound.
  Estimates for the logarithmic volume derivative
and the temperature gradient then close the $H^1$ estimates without
requiring a second derivative of the initial specific volume.
\end{abstract}
\noindent\textit{Key words and phrases:} compressible Navier--Stokes
equations; viscous contact wave; rarefaction wave; temperature-dependent
viscosity; large initial perturbation.

\section{Introduction}
\renewcommand{\theequation}{\arabic{section}.\arabic{equation}}
\setcounter{equation}{0}

The one-dimensional compressible Navier--Stokes system for a viscous heat-conducting perfect polytropic gas takes the following form in Lagrangian coordinates; see \cite{Batchelor,Serrin}:
\begin{equation}\label{ns}
\left\{
\begin{array}{ll}
\di v_t-u_x=0,\\
\di u_t+p_x=\left(\mu\frac{u_x}{v}\right)_x,\\
\di \left(e+\frac{u^2}{2}\right)_t+\left(p u\right)_x=\left(\kappa \frac{\t_x}{v}+\mu\frac{uu_x}{v}\right)_x
\end{array}
\right.
\end{equation}
for $x\in\mathbb{R}=(-\infty,+\infty)$, $t>0$, where $v(x,t)>0$, $u(x,t)$, $\t(x,t)>0$, $e(x,t)>0$ and $p(x,t)$
are the specific volume, fluid velocity, absolute temperature, internal energy
and pressure, respectively, while the positive functions $\mu$ and $\k$ denote
the viscosity and heat conduction coefficients, respectively. Here we study the ideal
polytropic fluids so that $p$ and $e$ are given by the state equations
\begin{equation}
	\label{p-e}
\di p=\frac{R\t}{v}=Av^{-\gamma}\exp\left(\frac{\gamma-1}{R}s\right),\quad e=c_{\nu}\t+\mathrm{const},
\end{equation}
where $s$ is the entropy, $\gamma>1$ is the adiabatic exponent,  $c_{\nu}=\frac{R}{\gamma-1}$ is
the specific heat, and $A$ and $R$ are positive constants. We consider
viscosity and heat conductivity proportional to possibly different
powers of the temperature:
\begin{equation}
	\di\mu =\mu(\theta)= \tilde{\mu}\theta^\alpha,\quad \kappa = \kappa(\theta)=\tilde {\kappa}\theta^\beta,
	\label{mu-k}
\end{equation}
where $\tilde{\mu},\tilde{\kappa}>0$ and $\alpha, \beta \geq 0$ are constants.

We supplement \eqref{ns} with the initial and far-field conditions
\begin{equation}\label{initial}
\left\{
\begin{array}{ll}
\di (v,u,\t)(x,0)=(v_0,u_0,\t_0)(x), &\di x\in \mathbb{R},\\
\di (v,u,\t)(\pm\infty,t)=(v_{\pm},u_{\pm},\t_{\pm}), &\di t>0,
\end{array}
\right.
\end{equation}
where $v_{\pm}(>0)$, $u_{\pm}$ and $\t_{\pm}(>0)$ are given constants, and we assume
$\inf_{\mathbb{R}}v_0>0$, $\inf_{\mathbb{R}}\t_0>0$, and $(v_0,u_0,\t_0)(\pm\infty)=(v_{\pm},u_{\pm},\t_{\pm})$
as compatibility conditions. 

For constant transport coefficients and equal far-field states, global
existence and large-time behavior have been studied extensively; see
\cite{kazhi-she,kazhikhov,jiang,jiang-2,L-L} and the references therein.
In particular, Jiang \cite{jiang,jiang-2} derived time-uniform bounds
for the specific volume in unbounded domains, and Li and Liang
\cite{L-L} obtained uniform temperature estimates and the corresponding
large-time behavior for large data.

For different end states, the large-time behavior is governed by the
Riemann solution of the associated Euler system, obtained from
\eqref{ns} by setting $\mu=\kappa=0$. Its elementary waves are shock
waves, rarefaction waves and contact discontinuities, together with
their superpositions; see \cite{Smoller}. For constant transport
coefficients, the stability of viscous shock and rarefaction waves is
well understood; see \cite{Huang-matsumura,KM,MN-85} and
\cite{LX-88,MN-86,MN-92,nishi-yang-zhao}, respectively. Viscous
contact waves were studied in
\cite{huang-ma-shi,Huang-Matsumura-Xin,Huang-Xin-Yang,Huang-Yang,huang-zhao}.
Huang, Li and Matsumura \cite{Huang-Li-matsumura} proved the asymptotic
stability of a contact--rarefaction composite wave, and Huang and Wang
\cite{H-W} later allowed large initial perturbations without imposing a
smallness condition on $\gamma-1$. More recently, Peng, Shi and Wu
\cite{Peng-Shi-Wu} sharpened the dependence of the wave-strength
threshold on the initial perturbation for this \emph{constant-coefficient}
problem.

The temperature-dependent case is substantially more delicate. The
Chapman--Enskog expansion motivates transport coefficients depending on
the temperature \cite{Cercignani,Chapman}; when $\alpha$ or $\beta$ is
positive, the diffusion may degenerate at low temperature and the
equations contain additional nonlinear terms. For equal far-field
states with constant viscosity and temperature-dependent heat
conductivity, Jenssen and Karper \cite{Jenssen} obtained weak solutions
on bounded domains, Pan and Zhang \cite{Pan-Zhang} proved global strong
solvability, and Li, Shu and Xu \cite{Li-Shu-Xu} treated the Cauchy
problem in unbounded domains. Further large-time results are given in
\cite{Huang-Shi,Li-Xu}. Results for density--temperature-dependent
transport coefficients and boundary-value problems, which are different
from \eqref{mu-k}, can be found in \cite{Duan-Guo-Zhu,Huang-Shi-Sun}.

There are also several directly relevant stability results for waves with
nonconstant transport coefficients. Huang and Liao \cite{Huang-Liao} studied a
contact--rarefaction wave under density--temperature-dependent
coefficients. For general positive functions $\mu(\theta)$ and
$\kappa(\theta)$, Dong and Guo \cite{Dong-Guo-composite} proved the
stability of an $R_1$--contact--$R_3$ composite wave with large data
through a Nishida--Smoller-type assumption that $\gamma-1$ is
sufficiently small; their theorem uses $H^2$-level initial regularity
and imposes wave-size conditions tied to $\gamma-1$. Their later work
\cite{Dong-Guo} gives the corresponding single-contact result under the
same small-$\gamma-1$ mechanism. In a complementary equal-far-field
setting, Dong and Guo \cite{Dong-Guo-Cauchy} considered the power laws
in \eqref{mu-k} for arbitrary fixed $\gamma>1$, assuming a small
viscosity exponent $\alpha$.

The present paper treats the nonconstant equilibrium  generated by different
far-field states for the power-law model \eqref{mu-k}. For every fixed
$\beta\geq0$, $\gamma>1$, and every prescribed finite $H^1$ size of the
initial perturbation, we prove stability when $\alpha$ and the total
wave strength are sufficiently small, with thresholds allowed to depend
on the prescribed data. Thus ``large data'' means that this $H^1$ size
may be chosen arbitrarily large in advance; it does not mean that the
smallness thresholds are independent of the data. The first theorem
gives global stability of a single viscous contact wave. The second
gives global stability of the $R_1$--contact--$R_3$ composite wave under
smallness of the \emph{total} strength only, without a comparability
assumption on the three constituent strengths. In particular, our
result replaces the small-$\gamma-1$ mechanism of
\cite{Dong-Guo-composite,Dong-Guo} by a small-$\alpha$ condition for
the power-law coefficients, and extends the equal-far-field result of
\cite{Dong-Guo-Cauchy} to the wave patterns considered here. The present
result does not transfer the more flexible constant-coefficient data condition in
\cite{Peng-Shi-Wu}. A key estimate controls the logarithmic volume
derivative together with the temperature gradient, so that the $H^1$
estimates close without requiring a second derivative of the initial
specific volume.

\subsection{Main results}

We first recall the viscous contact wave $(V,U,\Theta)$ for
\eqref{ns}; see \cite{Huang-Xin-Yang}. For the Euler system
\begin{equation}\label{euler}
	\left\{
	\begin{array}{ll}
		\di v_t-u_x=0,\\
		\di u_t+p_x=0,\\
		\di \left(e+\frac{u^2}{2}\right)_t+\left(p u\right)_x=0,
	\end{array}
	\right.
\end{equation}
with Riemann initial data
\begin{equation}\label{Riemann}
	(v,u,\t)(x,0)=\left\{
	\begin{array}{ll}
		\di (v_-,u_-,\t_-),&\di x<0,\\
		\di (v_+,u_+,\t_+),&\di x>0,
	\end{array}
	\right.
\end{equation}
the contact discontinuity takes the form
\begin{equation}
(\tilde V,\tilde U,\tilde\Theta)(x,t)=\left\{
\begin{array}{ll}
\di (v_-,u_-,\t_-),&\di x<0,t>0,\\
\di (v_+,u_+,\t_+),&\di x>0,t>0,
\end{array}
\right.
\end{equation}
provided that
\begin{equation}\label{RH}
\di u_-=u_+,\quad p_-\triangleq\frac{R\theta_-}{v_-}=p_+\triangleq\frac{R\t_+}{v_+}.
\end{equation}
We assume that $u_-=u_+=0$ without loss of generality.
The corresponding viscous contact wave $(V,U,\Theta)$ is a smooth
diffusion wave. To construct it, we set
$$
\frac{R\Theta}{V}=p_+,
$$
and take the leading part of the energy equation $\eqref{ns}_3$ as
\begin{equation}\label{Theta t}
\di c_{\nu}\Theta_t+p_+U_x=\left(\k_1\frac{\Theta_x}{V}\right)_x,
\end{equation}
where $\k_1=\tilde\kappa\Theta^\beta$. Equations \eqref{Theta t}
and $V_t=U_x$ give the nonlinear diffusion equation
\begin{equation}\label{1.8}
\di \Theta_t=\frac{\tilde\kappa(\gamma-1)p_+}{\gamma R^2}\left(\Theta^{\beta-1}\Theta_x\right)_x,\quad
\Theta(\pm\infty,t)=\theta_{\pm}.
\end{equation}
It has a unique self-similar solution $\Theta(x,t)=\Theta(\xi)$,
$\xi=x/\sqrt{1+t}$; see \cite{xiao-liu}. The function $\Theta(\xi)$
is increasing if $\theta_+>\theta_-$ and decreasing if
$\theta_+<\theta_-$. With $\delta=|\theta_+-\theta_-|$, it satisfies
\begin{equation}\label{1.9}
\di (1+t)|\Theta_{xx}|+(1+t)^{\frac{1}{2}}|\Theta_x|+|\Theta-\theta_{\pm}|
=O(1)\delta e^{-\frac{c_1x^2}{1+t}} \quad \mathrm{as}~ |x|\rightarrow\infty,
\end{equation}
where $c_1>0$ depends on the fixed coefficients and end temperatures.
We define the contact wave by
\begin{equation}\label{contact}
\di V=\frac{R\T}{p_+},\quad U=\frac{\k_1(\gamma-1)\Theta_x}{\gamma R\Theta},
\quad \Theta=\Theta.
\end{equation}
Then $(V,U,\Theta)$ satisfies
\begin{equation}\label{contact equation}
\left\{
\begin{array}{ll}
\di V_t-U_x=0,\\
\di U_t+\left(\frac{R\Theta}{V}\right)_x=\left(\mu_1\frac{U_x}{V}\right)_x+\tilde{R}_1,\\
\di c_{\nu}\Theta_t+p(V,\Theta)U_x=\left(\kappa_1 \frac{\Theta_x}{V}\right)_x+\mu_1\frac{U^2_x}{V}+\tilde{R}_2,
\end{array}
\right.
\end{equation}
where
\begin{equation}
\di  
\mu_1 = \tilde{\mu}\Theta^\alpha,\quad 
 \widetilde R_1=U_t-\left(\mu_1\frac{U_x}{V}\right)_x,\quad \widetilde R_2=-\mu_1\frac{U_x^2}{V}.
\label{r1r2}
\end{equation}

For the single viscous contact wave, define the perturbation by
\begin{equation}\label{perturbation-def}
(\phi,\psi,\zeta)(x,t)=(v-V,u-U,\t-\Theta)(x,t).
\end{equation}

Here $\Phi(z)=z-1-\log z$ for $z>0$.  We now state the first result.
\begin{theorem}[Viscous contact wave]\label{theorem1}
Fix $\beta\geq0$, $\gamma>1$, positive physical coefficients,
$N_0>0$, $\underline v_0>0$, $\underline\theta_0>0$, and a compact
set $\mathcal K\Subset(0,\infty)\times\mathbb R\times(0,\infty)$.
Let $(v_\pm,u_\pm,\theta_\pm)\in\mathcal K$ satisfy \eqref{RH}, let
$(V,U,\Theta)$ be the viscous contact wave in \eqref{contact}, and set
$\delta=|\theta_+-\theta_-|$.  There are positive constants
$\varepsilon_0$ and $\delta_0$, depending only on the prescribed data,
such that if
\begin{equation}\label{data}
\begin{aligned}
&0\leq\alpha<\varepsilon_0,\qquad 0\leq\delta\leq\delta_0,\\
&(\phi_0,\psi_0,\zeta_0)\in H^1(\mathbb R)^3,\qquad
  \|\left(\phi_0,\psi_0,\zeta_0\right)\|_{H^1}\leq N_0,\\
&v_0\geq\underline v_0,\qquad \theta_0\geq\underline\theta_0,
\end{aligned}
\end{equation}
then the Cauchy problem \eqref{ns}, \eqref{initial} admits a unique
global positive strong solution, and
\[
(\phi,\psi,\zeta)\in C\bigl([0,\infty);H^1(\mathbb R)^3\bigr).
\]
Moreover,
\[
0<\inf_{(x,t)\in\mathbb R\times[0,\infty)}v(x,t)
 \leq\sup_{(x,t)\in\mathbb R\times[0,\infty)}v(x,t)<\infty,
\]
\[
0<\inf_{(x,t)\in\mathbb R\times[0,\infty)}\theta(x,t)
 \leq\sup_{(x,t)\in\mathbb R\times[0,\infty)}\theta(x,t)<\infty,
\]
and
\begin{equation}\label{main-estimate}
\begin{aligned}
&\sup_{t\ge0}\|(\phi,\psi,\zeta)(t)\|_{H^1}^2\\
&\quad+\int_0^\infty\left[
 \|\phi_x\|_2^2+\|(\psi_x,\zeta_x)\|_{H^1}^2
               +\|(\psi_t,\zeta_t)\|_2^2\right]dt\le C,
\end{aligned}
\end{equation}
and
\begin{equation}\label{behavior-1}
\lim_{t\to\infty}\|(\phi,\psi,\zeta)(t)\|_\infty=0.
\end{equation}
\end{theorem}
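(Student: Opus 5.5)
We will argue by the standard continuation scheme: local existence of a strong solution in the class $(\phi,\psi,\zeta)\in C([0,T];H^1)$ with $v,\theta$ bounded below, combined with a priori estimates that are uniform in $T$. The a priori estimates will be closed under an a priori assumption on the temperature only, of the form
\[
\tfrac12\underline m\le \theta(x,t)\le 2\overline M\quad\text{on }\mathbb R\times[0,T],
\]
where $\underline m,\overline M$ depend only on $N_0,\underline v_0,\underline\theta_0,\mathcal K$ and $\beta$, but not on $\alpha$. The smallness of $\alpha$ then gives $|\theta^\alpha-1|\le C\alpha$ and $|\mu'(\theta)|\le C\alpha$ on this range. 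Consequently the viscosity is uniformly comparable to $\tilde\mu$, and every term arising from $\mu'(\theta)\theta_x$ carries a factor $\alpha$. The final step will improve the temperature bounds to $\underline m\le\theta\le\overline M$, which closes the bootstrap.

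\textbf{Step 1 (basic energy estimate).} We use the relative entropy
\[
\eta=R\Theta\,\Phi(v/V)+\tfrac12\psi^2+c_\nu\Theta\,\Phi(\theta/\Theta).
\]
Testing the perturbation equations, obtained from \eqref{ns} and \eqref{contact equation}, gives
\[
\sup_t\int\eta\,dx+\int_0^T\!\!\int\Bigl(\frac{\mu\psi_x^2}{v\theta}+\frac{\kappa\zeta_x^2}{v\theta^2}\Bigr)\le C(N_0)+C\delta^{1/2}\!\int_0^T\!\!\int(\ldots).
\]
The error terms come from the wave derivatives $\Theta_x,U_x$ and from $\widetilde R_1,\widetilde R_2$, which decay by \eqref{1.9}. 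Controlling them requires the Huang--Li--Matsumura heat-kernel estimate for $\int_0^T\!\int(\phi,\psi,\zeta)^2\Theta_x^2$ and an integrated $\int_0^T\!\int w^2 e^{-cx^2/(1+t)}/(1+t)$ bound. This is where the smallness of $\delta$ is used. The terms involving $\mu(\theta)-\mu_1$ carry $\alpha$, or are absorbed using the a priori temperature bound.

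\textbf{Step 2 (uniform bounds for $v$).} We follow Kazhikhov/Jiang: integrate the momentum equation in time over a unit cell to get a representation $v=B(x,t)Y(t)\times(\ldots)$. Because $\mu$ depends on $\theta$, we instead write $(\mu\log v)_t$ modulo a commutator $\mu'(\theta)\theta_t\log v$, which is $O(\alpha)$. Then Step 1 together with Jensen's inequality gives $0<C^{-1}\le v\le C$, uniformly in $t$ and with $C$ independent of $\alpha$.

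\textbf{Step 3 ($H^1$ estimates).} The key quantity is the logarithmic volume derivative. Let $\mu v_x/v$ play the role of an effective momentum, and test $\bigl(\psi-\mu(\theta)(\log v)_x\bigr)_t+p_x=\ldots$ against $(\log v)_x$-type terms, combining $\phi_x$ with $\zeta_x$ through the gradient of $\theta$. This yields $\sup_t\|\phi_x\|^2+\int_0^T\!\int\theta\phi_x^2\le C$ without any $v_{xx}$. The extra term $\mu'\theta_x(\log v)_x\psi$ is $O(\alpha)$ and is absorbed. Next we multiply the momentum equation by $-\psi_{xx}$ and the energy equation by $-\zeta_{xx}$. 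Using $\|\zeta\|_\infty^2\le C\|\zeta\|\|\zeta_x\|$ and the smallness of $\alpha$ for the $\kappa'$-free but $\mu'$-dependent terms gives the full bound \eqref{main-estimate}. The $\beta$-dependence of $\kappa$ is harmless here, since $\theta$ is bounded above and below.

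\textbf{Step 4 (closing the temperature bounds; main obstacle).} This step is the hardest. An upper bound follows from $\|\zeta\|_\infty\le C\|\zeta\|^{1/2}\|\zeta_x\|^{1/2}$ and the dissipation. For a time-uniform positive lower bound we use the Li--Liang approach. First, a Moser/maximum-principle argument on $1/\theta$ gives $\theta\ge C(T)^{-1}$ only for short time. Then, for large $t$, since $\int_0^\infty\|\zeta_x\|^2+\|\zeta_t\|^2<\infty$, we have $\|\zeta(t)\|_\infty\to0$. This yields a time $T_0$ with $\theta\ge\tfrac12\min\Theta$ for $t\ge T_0$, so $\underline m$ depends only on data.

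The delicacy is that the constants in Steps 1--3 must not depend on the bootstrap bounds in a circular way. We therefore fix $\underline m,\overline M$ first from the $\alpha$-independent structure, and only then choose $\varepsilon_0,\delta_0$. Finally, \eqref{behavior-1} follows from $\frac{d}{dt}\|(\phi_x,\psi_x,\zeta_x)\|^2\in L^1(0,\infty)$ together with Gagliardo--Nirenberg, and uniqueness is standard.
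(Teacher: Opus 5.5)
Your outline follows the paper's skeleton (relative entropy, a Kazhikhov--Jiang representation for $v$, an effective-velocity estimate for $(\log v)_x$, a Li--Liang type lower bound, bounds fixed before $\varepsilon_0,\delta_0$). It does not close, however, because the temperature bootstrap is circular.

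\textbf{The temperature bootstrap.} In Step 3 you make $\kappa=\tilde\kappa\theta^\beta$, $R\theta u_x/v$ and $\mu u_x^2/v$ harmless by invoking $\tfrac12\underline m\le\theta\le2\overline M$. The upper bound for $v$ in Step 2 also involves $\sup_x\theta$. So your $H^1$ bounds depend on $\underline m,\overline M$, and Step 4 only yields $\theta\le C(\underline m,\overline M)$; nothing forces this below $\overline M$. Smallness of $\alpha,\delta$ cannot help, since this dependence is already present at $\alpha=\delta=0$. Moreover, the basic energy only controls $\iint\psi_x^2/\theta$ and $\iint\theta^{\beta-2}\zeta_x^2$, which degenerate where $\theta$ is large or small.

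The missing idea is that every main estimate must be proved with constants independent of the a priori bound $M$ in \eqref{space}, leaving only remainders $\alpha C(M)+\delta^rC(M)$. The paper achieves this in three stages.
\begin{itemize}
\item It bounds $\sup_x\theta$ by $C+C\int\theta^{\beta-2}\zeta_x^2/v\,dx\,\sup_xv$ and applies Gronwall to $\sup v$ (Lemma~\ref{v-bound}).
\item It uses the cutoff multipliers $(\zeta-\Theta)_+$, $2\psi(\zeta-\Theta)_+$, $|\psi|^b\psi$ for $\beta<1$, and $(\theta^{-q}-4\Theta^{-q})_+$, $(\theta-2\Theta)_+/\theta$ for $\beta\ge1$. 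These give $\iint(\psi_x^2+\theta^{-1}\zeta_x^2)\le C+\delta^rC(M)$ for every $\beta\ge0$.
\item It uses the multipliers $(\theta-2\Theta)_+^{q-1}$ and $\theta^\beta\zeta_t$ to get $\sup_t\int\theta^{2\beta}\zeta_x^2\le C_0$, see \eqref{sup-tx}, whence $\theta\le C_0$ by \eqref{the-up}.
\end{itemize}
Only then is $M\ge4C_0$ fixed and $\varepsilon_0,\delta_0$ chosen.

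Your lower bound has a related defect. The convergence $\|\zeta(t)\|_\infty\to0$ is not quantitative, so the resulting $T_0$, and hence $\underline m$, depends on the solution; it also has no meaning on a finite bootstrap interval. The paper instead uses $\int_0^T\!\int\zeta^6\le C_0$, see \eqref{dt-int}. This gives, in every window $[t-T_0,t]$ with $T_0$ depending only on the data, a time $s$ with $\theta(\cdot,s)\ge c$. That bound is then propagated to time $t$ by the $L^p$ estimate for $(\theta^{-1}-2/c)_+$, whose source is bounded by $R^2/(4\mu v)$.

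\textbf{The volume representation.} Step 2 also fails as written. If you integrate the momentum equation in $x$ and then write $\mu(\log v)_t=(\mu\log v)_t-\mu_t\log v$, the commutator $\mu'(\theta)\theta_t\log v$ is evaluated at the point $x$. Its time integral needs $\int_s^t|\theta_t(x,r)|\,dr$ uniformly in $x$, i.e.\ $\theta_t\in L^1_tL^\infty_x$. The $H^1$ framework does not supply this, since $\zeta_t$ is only in $L^2_{t,x}$. Even then the integral grows with $t-s$, so ``$O(\alpha)$'' gives no time-uniform bound.

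The paper divides the momentum equation by $\mu$ \emph{before} integrating against the cutoff $D$, see \eqref{volume0:divided-momentum}. The $\alpha$-terms $u\theta_t/(\mu\theta)$, $R\theta_x/(\mu v)$, $\theta_xu_x/(\theta v)$ then enter $B$ only through integrals over a bounded interval, see \eqref{B}. These are at most $\alpha C(M)(1+\sqrt{t-s})$ by \eqref{space}. This growth is absorbed by the decay $-c(t-s)$ of $\log(Y(t)/Y(s))$ obtained from Jensen's inequality, giving \eqref{Y-e}.

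\textbf{Ordering in Step 1.} Step 1 invokes the Gaussian estimate, whose hypotheses (Lemma~\ref{lemma5}) include bounds on $v$ and on $\int_0^T\|\phi_x\|_2^2$. In your scheme these only arrive in Steps 2--3. The paper avoids this by placing $M^{-1}\le v\le M$ and the $H^1$ quantities in \eqref{space} and taking $C(M)\delta$ small.
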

\begin{remark}
The number $N_0$ may be arbitrarily large, but the thresholds may
depend on $N_0$, $\beta$, $\gamma$, the positive initial lower bounds,
and the compact set of end states.  No smallness of $\gamma-1$ is
assumed, and the case $\alpha=0$ is included.  The constants in the
estimate do not depend on $v_{0xx}$ or on higher norms of smooth
approximations.
\end{remark}

When the end states are connected by a 1-rarefaction, a contact
discontinuity and a 3-rarefaction, we write
\begin{equation}\label{zheng}
(v_+,u_+,\theta_+)\in R_1CR_3(v_-,u_-,\theta_-).
\end{equation}
The entropy and the two characteristic speeds are
\[
s=\frac{R}{\gamma-1}\log\frac{R\theta}{A}+R\log v,\qquad
s_\pm=\frac{R}{\gamma-1}\log\frac{R\theta_\pm}{A}+R\log v_\pm,
\]
\[
\lambda_\pm(v,s)=\pm\left(A\gamma v^{-\gamma-1}
                  e^{(\gamma-1)s/R}\right)^{1/2}.
\]
By the standard argument, there exists a unique pair of points $(v_-^m, u^m, \t_-^m)$
and $(v_+^m, u^m, \t_+^m)$ satisfying
$$
\frac{R\t_-^m}{v_-^m}=\frac{R\t_+^m}{v_+^m}\triangleq p^m,
$$
such that the points $(v_-^m,u^m,\t_-^m)$ and $(v_+^m,u^m,\t_+^m)$ belong to the 1-rarefaction wave
curve $R_-(v_-,u_-,\t_-)$ and the 3-rarefaction wave curve $R_+(v_+,u_+,\t_+)$, respectively, where
$$
R_{\pm}(v_{\pm},u_{\pm},\t_{\pm})=\left\{(v,u,\t):v>0,\ \theta>0,\quad s=s_{\pm},
u=u_{\pm}-\int_{v_{\pm}}^v\lambda_{\pm}(\eta,s_{\pm})d\eta,v\ge v_{\pm}
\right\}.
$$

We assume $u^m=0$ in what follows without loss of generality.
The 1-rarefaction wave $(v_-^r,u_-^r,\t_-^r)(\frac{x}{t})$ (respectively the 3-rarefaction wave $(v_+^r,u_+^r,\t_+^r)(\frac{x}{t})$)
connecting $(v_-,u_-,\t_-)$ and $(v_-^m,0,\t_-^m)$ (respectively $(v_+^m,0,\t_+^m)$ and $(v_+,u_+,\t_+)$) is the
weak solution of the Riemann problem of the Euler system \eqref{euler} with the following initial Riemann data
\begin{equation}\label{Riemann2}
\di(v_{\pm},u_{\pm},\t_{\pm})(x,0)=\left\{
\begin{array}{ll}
(v_{\pm}^m,0,\t_{\pm}^m), &\quad\pm x<0,\\
(v_{\pm},u_{\pm},\t_{\pm}), &\quad\pm x>0.
\end{array}
\right.
\end{equation}
Following \cite{MN-86}, we approximate the rarefaction waves by smooth
solutions $(V_\pm^r,U_\pm^r,\Theta_\pm^r)$ of \eqref{euler}, defined by
\begin{equation}\label{appro-rare}
\di\left\{
\begin{array}{ll}
\di\lambda_{\pm}(V_{\pm}^r(x,t),s_{\pm})=w_{\pm}(x,t),\\
\di U_{\pm}^r=u_{\pm}-\int_{v_{\pm}}^{V_{\pm}^r(x,t)}\lambda_{\pm}(\eta,s_{\pm})d\eta,\\
\T_{\pm}^r=\t_{\pm}(v_{\pm})^{\g-1}(V_{\pm}^r)^{1-\g},
\end{array}
\right.
\end{equation}
 where $w_-$ (respectively $w_+$) is the solution of the initial problem for the typical Burgers equation:
\begin{equation}\label{burgers}
\di\left\{
\begin{array}{ll}
\di w_t+ww_x=0,\quad(x,t)\in\mathbb{R}\times(0,\infty),\\
\di w(x,0)=\frac{w_r+w_l}{2}+\frac{w_r-w_l}{2}\tanh x,
\end{array}
\right.
\end{equation}
with $w_l=\lambda_-(v_-,s_-)$, $w_r=\lambda_-(v_-^m,s_-)$ (respectively $w_l=\lambda_+(v_+^m,s_+)$, $w_r=\lambda_+(v_+,s_+)$).

Let $(V^{cd},U^{cd},\T^{cd})(x,t)$ be the viscous contact wave constructed in \eqref{1.8} and \eqref{contact}
with $(v_{\pm},u_{\pm},\t_{\pm})$ replaced by $(v_{\pm}^m,0,\t_{\pm}^m)$, respectively.

The wave strengths are
\begin{equation*}
\begin{array}{ll}
\di\d^{r_1}=|v_-^m-v_-|+|0-u_-|+|\t_-^m-\t_-|, \quad\\[3mm] \di\d^{cd}=|\t_+^m-\t_-^m|,\\[3mm]
\di\d^{r_3}=|v_+^m-v_+|+|0-u_+|+|\t_+^m-\t_+|
\end{array}
\end{equation*}
and $\delta=\delta^{r_1}+\delta^{cd}+\delta^{r_3}$.
A constituent with zero strength is its constant end state. We define
\begin{equation}\label{ansatz}
\left(
\begin{array}{l}
\di V\\
\di U \\
\di \T
\end{array}
\right)(x,t) = \left(
\begin{array}{l}
\di V^{cd}+V_-^r+V_+^r\\
\di U^{cd}+U_-^r+U_+^r\\
\di \T^{cd}+\T_-^r+\T_+^r
\end{array}\right)(x,t)-\left(
\begin{array}{l}
\di v_-^m+v_+^m\\
\di \quad~~ 0\\
\di \t_-^m+\t_+^m
\end{array}\right),
\end{equation}
and
$$
(\phi,\psi,\zeta)(x,t)=(v-V,u-U,\t-\T)(x,t).
$$

For the composite wave $(V,U,\Theta)$ defined by \eqref{ansatz}, set
$P=R\Theta/V$, $\mu_1=\tilde\mu\Theta^\alpha$, and
$\kappa_1=\tilde\kappa\Theta^\beta$. Its residuals are
\begin{equation}\label{profile-residuals}
\begin{aligned}
\widetilde R_1&=U_t+P_x-\left(\frac{\mu_1U_x}{V}\right)_x,\\
\widetilde R_2&=c_\nu\Theta_t+PU_x
-\left(\frac{\kappa_1\Theta_x}{V}\right)_x-\frac{\mu_1U_x^2}{V}.
\end{aligned}
\end{equation}
We now state the composite-wave result.
\begin{theorem}[Composite waves]\label{theorem2}
Fix $\beta\geq0$, $\gamma>1$, positive physical coefficients,
$N_0>0$, $\underline v_0>0$, $\underline\theta_0>0$, and a compact
set $\mathcal K\Subset(0,\infty)\times\mathbb R\times(0,\infty)$.
Let $(v_\pm,u_\pm,\theta_\pm)\in\mathcal K$ satisfy
\eqref{zheng}, and let $(V,U,\Theta)$ be the composite wave defined by
\eqref{ansatz}.  Set
$\delta=\delta^{r_1}+\delta^{cd}+\delta^{r_3}$.  There are positive
constants $\varepsilon_0$ and $\delta_0$, depending only on the
prescribed data, such that if
\begin{equation}\label{composite-data}
\begin{aligned}
&0\leq\alpha<\varepsilon_0,\qquad 0\leq\delta\leq\delta_0,\\
&(\phi_0,\psi_0,\zeta_0)\in H^1(\mathbb R)^3,\qquad
  \|\left(\phi_0,\psi_0,\zeta_0\right)\|_{H^1}\leq N_0,\\
&v_0\geq\underline v_0,\qquad \theta_0\geq\underline\theta_0,
\end{aligned}
\end{equation}
then the Cauchy problem \eqref{ns}, \eqref{initial} admits a unique
global positive strong solution, and
\[
(\phi,\psi,\zeta)\in C\bigl([0,\infty);H^1(\mathbb R)^3\bigr).
\]
Furthermore,
\[
0<\inf_{(x,t)\in\mathbb R\times[0,\infty)}v(x,t)
 \leq\sup_{(x,t)\in\mathbb R\times[0,\infty)}v(x,t)<\infty,
\]
\[
0<\inf_{(x,t)\in\mathbb R\times[0,\infty)}\theta(x,t)
 \leq\sup_{(x,t)\in\mathbb R\times[0,\infty)}\theta(x,t)<\infty,
\]
and the estimates \eqref{main-estimate} and \eqref{behavior-1} hold
for $(v-V,u-U,\theta-\Theta)$.  In addition,
\begin{equation}\label{main-rarefaction-dissipation}
\int_0^\infty\!\int
  P\left[\Phi\!\left(\frac{\theta V}{\Theta v}\right)
                   +\gamma\Phi\!\left(\frac vV\right)\right]
  \left((U_-^r)_x+(U_+^r)_x\right)dxdt\le C.
\end{equation}
\end{theorem}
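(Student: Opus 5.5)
The plan is to run the single-contact argument of Theorem~\ref{theorem1} on the composite profile \eqref{ansatz}, handling the rarefaction waves through their monotonicity. Existence is by a local strong solution plus a continuation argument. The core is an a priori estimate on $[0,T]$ for solutions with $\|(\phi,\psi,\zeta)\|_{H^1}$ finite and $v,\theta$ positive. The constants must be independent of $T$, may depend on $N_0,\underline v_0,\underline\theta_0,\beta,\gamma,\mathcal K$, and must not depend on $v_{0xx}$.

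\textbf{Step 1: basic energy estimate.} Multiply the perturbation system by the relative entropy weights, i.e.\ take the time derivative of
\[
\int\Bigl[\tfrac{\psi^2}{2}+R\Theta\,\Phi\bigl(\tfrac vV\bigr)+c_\nu\Theta\,\Phi\bigl(\tfrac\theta\Theta\bigr)\Bigr]dx .
\]
\begin{itemize}
\item The rarefaction parts satisfy $(U^r_\pm)_x\ge0$, so they produce the good term in \eqref{main-rarefaction-dissipation}.
\item The contact part produces terms like $\int|\Theta^{cd}_x|^2\zeta^2$. These I would control by the heat-kernel weighted estimate of Huang--Li--Matsumura, $\int_0^T\!\!\int \omega^2(\phi^2+\psi^2+\zeta^2)\le C+C\int_0^T\|(\phi_x,\psi_x,\zeta_x)\|^2$, with $\omega=(1+t)^{-1/2}e^{-cx^2/(1+t)}$. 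Its constant carries a factor $\delta$ that absorbs these terms.
\item The interaction terms between waves of different families are $O(\delta)e^{-c(|x|+t)}$ and are integrable.
\item The residuals $\widetilde R_1,\widetilde R_2$ in \eqref{profile-residuals} satisfy $\|\widetilde R_i\|_{L^1}\le C\delta(1+t)^{-1}$-type bounds plus rarefaction errors $\delta^{1/8}(1+t)^{-7/8}$.
\end{itemize}
These are handled against $\sup\|(\phi,\psi,\zeta)\|_\infty$, which I will bound by Step~3 as in the proof of Theorem~\ref{theorem1}. Since $\alpha$ is small, $\mu(\theta)-\mu_1$ is small times the $\theta$-bounds, so the viscous cross terms carry factors $\alpha$ or $\delta$.

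\textbf{Step 2: bounds on $v$ and $\theta$ — the main obstacle.} These must be uniform in time. For $v$, I would use Kazhikhov–Jiang localized representation on unit intervals $[k,k+1]$. With the composite profile the momentum equation is integrated against $\int_x^\infty$ minus the profile's own flux. The profile terms then contribute $U$-differences bounded by $C\delta$ and residual integrals that are time-integrable from Step~1. The nonconstant viscosity $\tilde\mu\theta^\alpha$ enters as $\theta^\alpha\,(\log v)_t$. Here I plan to write $\theta^\alpha=1+O(\alpha|\log\theta|)$ and absorb the error using the lower and upper $\theta$-bounds. Those bounds are obtained simultaneously via a bootstrap: assume $\theta\in[\tfrac12 m,2M]$ and improve it, with the smallness of $\alpha$ closing the loop. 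The upper bound on $\theta$ follows from the $\|\zeta_x\|$ estimates. The lower bound on $\theta$ follows from a maximum-principle/$L^p$ argument on $1/\theta$ in the energy equation, where $\kappa=\tilde\kappa\theta^\beta$ is harmless for any $\beta$ once $v$ is bounded.

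\textbf{Step 3: $H^1$ estimates and decay.} Estimate $w=\mu(\theta)v_x/v-\mu_1V_x/V$, the logarithmic volume derivative, jointly with $\zeta_x$. Multiply the momentum equation rewritten as $(\mu(\theta)(\log v)_x)_t=\psi_t+p_x+\dots$ by $w$ to get $\int_0^T\|\phi_x\|^2$ without $v_{xx}$. Then multiply by $-\psi_{xx}$ and $-\zeta_{xx}$ to obtain $\|(\psi_x,\zeta_x)\|_{H^1}$ and $\|(\psi_t,\zeta_t)\|$ dissipation. The profile derivatives are bounded by $C\delta$ times decaying weights, and the rarefaction second derivatives satisfy $\|\partial_x^2U^r\|_{L^1}\le C\delta^{1/8}(1+t)^{-7/8}$-type bounds.

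\textbf{Step 4: conclusion.} Combining Steps 1–3 gives \eqref{main-estimate} and \eqref{main-rarefaction-dissipation}, so the continuation argument yields global existence. Finally, $\int_0^\infty(\|\phi_x\|^2+\|\psi_x\|^2+\|\zeta_x\|^2+|\tfrac{d}{dt}\|(\phi_x,\psi_x,\zeta_x)\|^2|)dt<\infty$ gives $\|(\phi_x,\psi_x,\zeta_x)\|\to0$. Gagliardo–Nirenberg then yields \eqref{behavior-1}.
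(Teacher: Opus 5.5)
Your skeleton matches the paper's: relative entropy with the nonnegative rarefaction term, Gaussian-weighted control of the contact layer, the localized representation of $v$, the logarithmic volume derivative, and decay from $f,f'\in L^1$. What is missing is the part that makes the large-data, arbitrary-$\beta$ result work, namely bounds on $v$ and $\theta$ whose constants do not depend on the bootstrap bound $M$. Smallness of $\alpha$ cannot close your bootstrap $\theta\in[\frac12 m,2M]$, because $\alpha$ only multiplies the terms coming from $\mu_x$, $\mu_t$ and $\mu-\mu_1$.

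Two examples show the loop does not close.
\begin{itemize}
\item The representation only gives $v\le C+C\int_0^te^{-c(t-s)}\sup_x\theta(\cdot,s)\,ds$. Under your hypothesis this is $CM$, so it cannot be improved. The paper closes it with \eqref{v-theta}, which bounds $\sup_x\theta$ by $\sup_xv\int\theta^{\beta-2}\zeta_x^2$, followed by Gronwall.
\item The terms generated by $\kappa=\tilde\kappa\theta^\beta$ and $p=R\theta/v$ carry neither $\alpha$ nor $\delta$. Examples are $\iint\zeta_x^2/(v\theta)$ in the $\phi_x$ estimate, $\iint\theta^2\phi_x^2$ in the $\psi_{xx}$ estimate, and $\iint\theta^{2\beta}\zeta_x^2$ for the sup bound. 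None of these is dominated by the basic dissipation $\iint(\theta^{-1}\psi_x^2+\theta^{\beta-2}\zeta_x^2)$ where $\theta$ is large or small.
\end{itemize}
So ``the upper bound follows from the $\|\zeta_x\|$ estimates'' presupposes exactly what is missing.

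The paper obtains $M$-independent weighted dissipation through cutoff tests.
\begin{itemize}
\item For $\beta<1$: the multipliers $(\zeta-\Theta)_+$, $2\psi(\zeta-\Theta)_+$ and $|\psi|^b\psi$ with $b=2/(1-\beta)$, together with the interpolation \eqref{psx22}.
\item For $\beta\ge1$: the low-temperature tests $(\theta^{-q}-4\Theta^{-q})_+$.
\item Then the multipliers $(\theta-2\Theta)_+^{q-1}$ and $\theta^\beta\zeta_t$, which give $\sup_t\int\theta^{2\beta}\zeta_x^2\le C_0$ and hence the upper bound.
\end{itemize}
For the lower bound, the $L^p$ argument for $1/\theta$ alone gives only $\|(\theta^{-1}-K)_+\|_\infty\le C+Ct$, a lower bound that decays in time. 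Time-uniformity needs the restart in Lemma~\ref{theta-supbel}. From $\zeta\in L^6_{t,x}$ one picks $s\in[t-T_0,t]$ at which Sobolev and the $\theta^{2\beta}\zeta_x^2$ bound force $\theta(\cdot,s)\ge c$, and then runs the $1/\theta$ argument only on $[s,t]$.

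Two further steps fail as written.
\begin{itemize}
\item In Step~1, $\|F\|_1+\|G\|_1\le C\delta^{1/8}(1+t)^{-7/8}$ is not integrable in time; even the contact part is only $C\delta(1+t)^{-1}$. Pairing these with $\sup_t\|(\psi,\zeta)\|_\infty$ therefore gives a bound that grows with $T$. You need $\|f\|_\infty\le C\|f\|_2^{1/2}\|f_x\|_2^{1/2}$ and Young's inequality to absorb into the dissipation. This leaves the integrable weight $\delta^{1/6}(1+t)^{-7/6}$ and a Gronwall step, as in \eqref{int-F}.
\item In Step~3, you test $(\mu(\log v)_x)_t=u_t+p_x+\mu_t(\log v)_x-\mu_x(\log v)_t$ with $w\approx\mu(\log v)_x$. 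This produces $\alpha\iint\mu^2\theta^{-1}\theta_t|(\log v)_x|^2$, a cubic term that cannot be bounded at the $H^1$ level: $\zeta_t$ is only in $L^2_{t,x}$ and $\phi_{xx}$ is unavailable. The factor $\alpha$ therefore does not help. The paper instead tests $\mu(\tilde v_x/\tilde v)_t=\psi_t+(p-P)_x+\cdots$ with $\tilde v_x/(\mu\tilde v)$. Then $\mu_t$ enters only linearly, through $\partial_t(\psi/\mu)$, as $\psi\mu_t\mu^{-2}\tilde v_x/\tilde v$ (compare \eqref{phi-x1}), which is bounded by $\alpha C(M)$.
\end{itemize}
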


\begin{remark}
Only the total strength $\delta$ is required to be small; no
comparison among $\delta^{r_1}$, $\delta^{cd}$, and $\delta^{r_3}$ is
assumed.  By part~(4) of Lemma~\ref{rare-pro},
\eqref{behavior-1} yields
\[
\lim_{t\to\infty}
\left\|(v,u,\theta)-
\left(v_-^r+V^{cd}+v_+^r-v_-^m-v_+^m,
u_-^r+U^{cd}+u_+^r,
\theta_-^r+\Theta^{cd}+\theta_+^r-\theta_-^m-\theta_+^m\right)
\right\|_{L^\infty(\mathbb R)}=0.
\]
\end{remark}

\medskip
\noindent\emph{Notation.} Throughout the paper, $c$ and $C$ denote
positive generic constants.  We use the standard spaces $L^p(\Omega)$
and $H^k(\Omega)$, and omit the domain when no confusion can arise.

\medskip

We now make some comments on the analysis of this paper.
First, we consider the stability of the viscous contact wave.  We assume
the a priori estimate \eqref{space} and first establish the basic
relative-entropy estimate \eqref{basic}.  The key step is to obtain
time-uniform positive bounds for the specific volume and temperature.
Since the viscosity depends on the temperature, the equation for the
logarithmic volume gradient contains the new terms
\[
 \frac{\mu_t}{\mu}=\alpha\frac{\theta_t}{\theta},\qquad
 \frac{\mu_x}{\mu}=\alpha\frac{\theta_x}{\theta}.
\]
We derive \eqref{phi-x3} and use the small factor $\alpha$ to absorb
the two terms $\mu_t/\mu$ and $\mu_x/\mu$.  Next, adapting the localized representation
argument for the specific volume, we introduce a special cut-off function
and derive \eqref{v-repre}.  The key point is that the
$\alpha$-dependent remainder in \eqref{v-repre} is controlled while
its time-decay factor is retained; \eqref{v-repre} then yields the
time-uniform lower and upper bounds \eqref{v} for $v$.

Next, we multiply the temperature and momentum equations by
$(\zeta-\Theta)_+$ and $2\psi(\zeta-\Theta)_+$, respectively, and add
the resulting identities.  The cut-off argument together with the
high- and low-temperature tests yields the weighted temperature and
velocity dissipation needed for every fixed $\beta\geq0$, and hence the
uniform positive bounds for $\theta$.  Applying \eqref{scalar-gaussian}
with the weight $w$ in \eqref{w} controls the terms carrying the
contact-wave factor $e^{-c_1x^2/(1+t)}$.  Finally, the higher-order estimates close the
assumed a priori bound at the $H^1$ level; in particular, the argument
does not require an $L^2$ bound for $v_{0xx}$.

Second, we consider the $R_1$--contact--$R_3$ composite wave.  The key
point is to retain the rarefaction structure in the relative-entropy
identity, which yields the nonnegative dissipation in
\eqref{main-rarefaction-dissipation}, instead of treating the
rarefaction derivatives as error terms.  We combine the dissipation in
\eqref{main-rarefaction-dissipation} with
the Gaussian-weighted estimate for the contact layer and the decay
estimates for the residual terms $F$ and $G$, which enter through the
energy estimates and Sobolev inequalities.  Consequently, the terms
containing $(V^{cd}_x,U^{cd}_x,\Theta^{cd}_x)$,
$(V_\pm^r,U_\pm^r,\Theta_\pm^r)_x$, or $(\widetilde R_1,\widetilde R_2)$
are controlled by the total strength
$\delta=\delta^{r_1}+\delta^{cd}+\delta^{r_3}$, and the continuation
criterion associated with \eqref{local0:class} yields the global $H^1$ estimate
without comparing the three component strengths.

Section~2 records the decay and interaction estimates for
$(V^{cd},U^{cd},\Theta^{cd})$ and
$(V_\pm^r,U_\pm^r,\Theta_\pm^r)$. Sections~3 and 4 then prove
Theorems~\ref{theorem1} and \ref{theorem2}, respectively.

\section{Preliminaries}
\setcounter{equation}{0}

The next lemma records the pointwise decay of the viscous contact wave.
\begin{lemma}\label{decay}
Assume that $\delta=|\t_+-\t_-|\leq \d_0$ for a small positive constant $\d_0$. Then the viscous contact wave $(V,U,\Theta)$ satisfies
$$
|V-v_{\pm}|+|\Theta-\t_{\pm}|\leq O(1)\d e^{-\frac{c_1x^2}{1+t}},\qquad \pm x\ge0,
$$
$$
|\partial^k_x V|+|\partial^{k-1}_x U|+|\partial^k_x\Theta|\leq
O(1)\delta(1+t)^{-\frac{k}{2}}e^{-\frac{c_1x^2}{1+t}}, \quad k\geq 1.
$$

\end{lemma}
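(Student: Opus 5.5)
The proof reduces everything to the self-similar profile $\Theta(\xi)$, $\xi=x/\sqrt{1+t}$, solving \eqref{1.8}. Write \eqref{1.8} as the ODE
\[
-\tfrac{\xi}{2}\Theta'=a\bigl(\Theta^{\beta-1}\Theta'\bigr)',\qquad a=\frac{\tilde\kappa(\gamma-1)p_+}{\gamma R^2},\qquad \Theta(\pm\infty)=\theta_\pm .
\]
Since $\Theta$ is monotone and lies between $\theta_-$ and $\theta_+$, we have $\Theta^{\beta-1}\in[c,C]$ uniformly over the compact set $\mathcal K$. Set $g=\Theta^{\beta-1}\Theta'$. Then $g'=-\frac{\xi}{2a}\Theta^{1-\beta}g$, so
\[
g(\xi)=g(0)\exp\Bigl(-\int_0^\xi\tfrac{\eta}{2a}\Theta^{1-\beta}(\eta)\,d\eta\Bigr).
\]
This gives $|\Theta'(\xi)|\le C|g(0)|e^{-2c_1\xi^2}$ for some $c_1>0$ depending only on the data. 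Integrating over $\mathbb R$ yields $|\theta_+-\theta_-|=\int|\Theta'|\,d\xi\ge c|g(0)|$, hence $|g(0)|\le C\delta$. Therefore $|\Theta'(\xi)|\le C\delta e^{-2c_1\xi^2}$. Integrating from $\pm\infty$ gives $|\Theta-\theta_\pm|\le C\delta e^{-c_1\xi^2}$ for $\pm\xi\ge0$, using the Gaussian tail bound $\int_\xi^\infty e^{-2c_1\eta^2}d\eta\le Ce^{-c_1\xi^2}$. Because $V=R\Theta/p_+$ with $p_+=R\theta_\pm/v_\pm$, this at once gives the first estimate for $V-v_\pm$.

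For the derivatives, $\partial_x^k\Theta=(1+t)^{-k/2}\Theta^{(k)}(\xi)$, so it suffices to show $|\Theta^{(k)}(\xi)|\le C_k\delta e^{-c_1\xi^2}$ for every $k\ge1$. We argue by induction from the ODE. Each $\Theta^{(k)}$, $k\ge2$, is a finite sum of terms of the form
\[
\text{(polynomial in }\xi)\times(\text{smooth function of }\Theta)\times \prod_j\Theta^{(m_j)},\qquad m_j<k,
\]
with at least one derivative factor present. Each such term is bounded by $C\delta(1+|\xi|)^{N}e^{-2c_1\xi^2}$, using $\delta\le\delta_0\le1$ for the extra factors. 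The surplus Gaussian absorbs the polynomial weight, giving $C_k\delta e^{-c_1\xi^2}$. The $V$ derivatives follow from $V=R\Theta/p_+$. For $U$, we have $U=\frac{\tilde\kappa(\gamma-1)}{\gamma R}\Theta^{\beta-1}\Theta_x$, so
\[
\partial_x^{k-1}U=(1+t)^{-k/2}\,\frac{d^{k-1}}{d\xi^{k-1}}\Bigl(\tfrac{\tilde\kappa(\gamma-1)}{\gamma R}\Theta^{\beta-1}\Theta'\Bigr),
\]
which has the same structure and the same bound. This accounts for the index shift between $U$ and $\Theta$ in the statement.

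The main obstacle is the uniformity of the constants. We need $c_1$ and the $O(1)$ constants to depend only on $\beta,\gamma$, the coefficients and $\mathcal K$, and not on $\delta$; in particular $c_1$ must not degenerate as $\delta\to0$. The route above handles this: $c_1=\min(\Theta^{1-\beta})/(8a)$ is bounded below uniformly because $\Theta$ stays in a compact subset of $(0,\infty)$. The linear dependence on $\delta$ comes entirely from the identity $\int|\Theta'|\,d\xi=\delta$. Existence and monotonicity of $\Theta$ are taken from \cite{xiao-liu}. The case $\delta=0$ is trivial, since then $\Theta$ is constant.
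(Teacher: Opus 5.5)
Your argument is correct. It is also more self-contained than what the paper does. The paper gives no separate proof of Lemma~\ref{decay}: it takes the self-similar profile and its decay \eqref{1.9} from \cite{xiao-liu} (see also \cite{Huang-Xin-Yang}) and transfers it to $V$ and $U$ through \eqref{contact}.

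You instead derive everything from the profile ODE. The integrating factor for $g=\Theta^{\beta-1}\Theta'$ gives three things at once:
\begin{itemize}
\item sign-definiteness of $\Theta'$, hence monotonicity and $\Theta\in[\min\theta_\pm,\max\theta_\pm]$;
\item a Gaussian with the explicit rate $c_1=\min\Theta^{1-\beta}/(8a)$, which stays bounded away from zero uniformly on $\mathcal K$;
\item the linear factor $\delta$, through $\int|\Theta'|\,d\xi=\delta$ combined with the matching lower Gaussian bound on $|g|$.
\end{itemize}

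Your route buys pointwise bounds valid for all $x$ and all $k\ge1$, with constants that are explicitly uniform in the data, which the theorems need. By contrast, \eqref{1.9} as quoted covers only $\Theta,\Theta_x,\Theta_{xx}$ and is phrased as $|x|\to\infty$. The paper's route buys only brevity.

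One point should be stated precisely in your induction. The hypothesis must carry the stronger weight,
\[
|\Theta^{(m)}(\xi)|\le C_m\delta(1+|\xi|)^{N_m}e^{-2c_1\xi^2}\quad\text{for } m<k .
\]
With only $e^{-c_1\xi^2}$ on the factors, a term such as $\xi\Theta^{1-\beta}\Theta'$ in $\Theta''$ could not be absorbed. You should pass to $e^{-c_1\xi^2}$ only at the very end. Your displayed term bound already uses this implicitly, so it is a matter of phrasing, not a gap.
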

By \eqref{r1r2},
\begin{equation}
\di \widetilde R_1=O(1)\d(1+t)^{-\frac{3}{2}}e^{-\frac{c_1x^2}{1+t}},
\quad \widetilde R_2=O(1)\d(1+t)^{-2}e^{-\frac{c_1x^2}{1+t}}.
\end{equation}

\begin{lemma}[\cite{Huang-Li-matsumura}]\label{hlm}
Let
\begin{equation}\label{w}
w(x,t)=(1+t)^{-1/2}e^{-c_1x^2/(8(1+t))},\qquad
g(x,t)=\int_{-\infty}^xw(y,t)\,dy.
\end{equation}
If $h\in L^\infty(0,T;L^2)$, $h_x\in L^2(0,T;L^2)$ and
$h_t\in L^2(0,T;H^{-1})$, then
\begin{equation}\label{scalar-gaussian}
\int_0^T\!\int h^2w^2\,dxdt
\le C\|h(0)\|_2^2+C\int_0^T\|h_x\|_2^2dt
+c_1\int_0^T\langle h_t,hg^2\rangle\,dt.
\end{equation}
\end{lemma}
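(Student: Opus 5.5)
The lemma to prove is the weighted Gaussian estimate \eqref{scalar-gaussian}. I would prove it by testing the equation for $h$ against $hg^2$ and using the heat-kernel structure of $w$.

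\textbf{Step 1: the heat equation for $g$.} Set $a=c_1/8$, so that $w=(1+t)^{-1/2}e^{-ax^2/(1+t)}$. Then $w$ solves the heat equation
\[
w_t=\tfrac{1}{4a}w_{xx}=\tfrac{2}{c_1}w_{xx}.
\]
Integrating in $x$ from $-\infty$ gives
\[
g_t=\tfrac{2}{c_1}w_x,\qquad g_x=w,\qquad 0\le g\le \int w\,dy=\sqrt{\pi/a},
\]
so $g$ is bounded uniformly in $t$.

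\textbf{Step 2: differentiate $\int h^2g^2$.} By a density argument, using $h\in L^2(0,T;H^1)$ with $h_t\in L^2(0,T;H^{-1})$ and $g$ smooth and bounded with bounded derivatives, we have
\[
\frac12\frac{d}{dt}\int h^2g^2\,dx=\langle h_t,hg^2\rangle+\int h^2gg_t\,dx .
\]
Inserting $g_t=\frac{2}{c_1}w_x$ and integrating by parts,
\[
\int h^2gw_x\,dx=-\int h^2w^2\,dx-2\int hh_xgw\,dx .
\]
Therefore
\[
\frac{2}{c_1}\int h^2w^2\,dx=-\frac12\frac{d}{dt}\int h^2g^2\,dx+\langle h_t,hg^2\rangle-\frac{4}{c_1}\int hh_xgw\,dx .
\]

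\textbf{Step 3: absorb the cross term and integrate in time.} By Young's inequality and the bound on $g$,
\[
\frac{4}{c_1}\Bigl|\int hh_xgw\,dx\Bigr|\le \frac{1}{c_1}\int h^2w^2\,dx+\frac{4}{c_1}\|g\|_\infty^2\|h_x\|_2^2 .
\]
The first term is absorbed into the left-hand side. Integrating over $[0,T]$ and dropping the nonnegative term $\frac12\int h^2g^2(T)$ gives
\[
\frac{1}{c_1}\int_0^T\!\int h^2w^2\,dxdt\le \frac12\|g\|_\infty^2\|h(0)\|_2^2+C\int_0^T\|h_x\|_2^2\,dt+\int_0^T\langle h_t,hg^2\rangle\,dt .
\]
Multiplying through by $c_1$ yields \eqref{scalar-gaussian} exactly, with the constant $c_1$ in front of the last term.

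\textbf{Main obstacle.} The only delicate point is justifying the time derivative of $\int h^2g^2$ at the stated regularity. This follows from the standard Lions–Magenes lemma applied to $hg$: since $g$ is a smooth multiplier, $hg\in L^2H^1$ and $(hg)_t\in L^2H^{-1}$.
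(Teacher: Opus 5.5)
Your proof is correct and follows the standard argument of Huang--Li--Matsumura, which the paper cites without reproducing. You test against $hg^2$, use $g_x=w$ and $g_t=\frac{2}{c_1}w_x$ (the same heat-equation identity the paper invokes in the proof of Lemma~\ref{lemma5}), integrate by parts to produce $-\int h^2w^2$, and absorb the cross term by Young's inequality. Your constants $C=\max\{4\pi,32\pi/c_1\}$ and the coefficient $c_1=8\alpha$ agree with the cited lemma when $\alpha=c_1/8$, and your Lions--Magenes justification for $hg$ is adequate at the stated regularity.
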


\begin{lemma}\label{lemma5}
Suppose that $v,\theta$ have fixed positive lower and finite upper bounds,
and that
\[
\sup_{0\le t\le T}\|(\phi,\psi,\zeta)(t)\|_{H^1}^2
+\int_0^T\|(\phi_x,\psi_x,\zeta_x)(t)\|_2^2dt\le C_0.
\]
For sufficiently small $\delta$,
\begin{equation}\label{important}
\int_0^T\!\int(\phi^2+\psi^2+\zeta^2)w^2\,dxdt\le C,
\end{equation}
where $C$ depends on $C_0$, the positive lower and finite upper bounds for
$v$ and $\theta$, and the fixed data.
\end{lemma}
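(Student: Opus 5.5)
The plan is to apply Lemma~\ref{hlm} to a few well-chosen scalar quantities built from the perturbation. Their time derivatives come from \eqref{ns} and \eqref{contact equation}. The flux pairings $\langle h_t,hg^2\rangle$ are then integrated by parts. Since $g_x=w$, each pairing produces a ``good'' term $\int h_x(\cdots)g^2$, which is controlled by the dissipation in the hypothesis, and a ``cross'' term $2\int(\cdots)h\,g\,w$, which is only of order $w$ and not $w^2$. The cross terms are the crux. They cannot be made small, because $g=O(1)$, so the quantities will be chosen so that the leading cross terms cancel between different choices of $h$.

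\textbf{Step 1: the three quantities.} I take
\[
h_1=\psi,\qquad h_2=p-P=\frac{R\theta}{v}-\frac{R\Theta}{V},\qquad h_3=R\zeta+(\gamma-1)p_+\phi .
\]
\begin{itemize}
\item \emph{Momentum mode $h_1=\psi$.} Here $\psi_t=-(p-P)_x+\bigl(\mu u_x/v-\mu_1U_x/V\bigr)_x-\widetilde R_1$. The pressure term then gives $\int(p-P)\psi_xg^2+2\int(p-P)\psi\,gw$.
\item \emph{Acoustic mode $h_2=p-P$.} I use the exact nonlinear identity
\[
p_t=-\gamma\frac{p}{v}u_x+\frac{\gamma-1}{v}\Bigl[\Bigl(\kappa\frac{\theta_x}{v}\Bigr)_x+\mu\frac{u_x^2}{v}\Bigr],
\]
and the analogous identity for $P$ with remainder $\widetilde R_2$. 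Its leading part is $-\gamma\frac{p}{v}\psi_x$ (plus terms in $U_x$), so $\langle h_{2t},h_2g^2\rangle$ contains $+2\int\gamma\frac{p}{v}\psi(p-P)gw$, plus terms where the $x$-derivative hits $\gamma p/v$.
\item \emph{Entropy mode $h_3$.} Since $R\zeta_t\approx-(\gamma-1)p\,\psi_x$ and $\phi_t=\psi_x$, the time derivative $h_{3t}$ is, to leading order, a pure diffusion term plus nonlinear products. It therefore generates no $O(1)$ cross term.
\end{itemize}

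\textbf{Step 2: cancellation and remainders.} I use \eqref{scalar-gaussian} for $\psi$ with a variable weight $\gamma p/v$ inserted. Equivalently, I apply the integration by parts directly to $\int\gamma\frac pv\psi^2g^2$. Adding this to the estimate for $h_2$ makes the two cross terms $2\int(p-P)\psi\,gw$ cancel exactly. What remains falls into four groups:
\begin{itemize}
\item terms $\int(\gamma p/v)_x\psi(p-P)g^2$, where $(p/v)_x$ involves $\phi_x,\zeta_x$ and $V_x,\Theta_x$;
\item viscous and heat-flux terms such as $\int\frac{\mu}{v}\psi_x(\psi g^2)_x$;
\item residual terms in $\widetilde R_1,\widetilde R_2$;
\item contact-wave terms with factors $U_x,\Theta_x,\Theta_{xx}$.
\end{itemize}
Because $v,\theta$ have fixed bounds and $\|(\phi,\psi,\zeta)\|_{L^\infty}\le C\|\cdot\|_{H^1}\le C$, the first two groups are bounded by $C\int(\|\phi_x\|_2^2+\|\psi_x\|_2^2+\|\zeta_x\|_2^2)\,dt\le C$. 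Lemma~\ref{decay} bounds the contact-wave and residual terms by $C\delta\int(\phi^2+\psi^2+\zeta^2)w^2$, plus integrable terms. For example, $|U_x|+|\Theta_x|^2\le C\delta(1+t)^{-1}e^{-c_1x^2/(1+t)}\le C\delta w^2$. Terms with only one factor $|\Theta_x|\le C\delta w$ are paired with a derivative via Cauchy--Schwarz, which gives $\eta\int\zeta_x^2+C\delta^2\int(\cdot)^2w^2$. The same bookkeeping applied to $h_3$ controls $\int h_3^2w^2$.

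\textbf{Step 3: conclusion.} Summing the three estimates controls $\int_0^T\!\int(\psi^2+(p-P)^2+h_3^2)w^2$. The map $(\phi,\zeta)\mapsto(p-P,h_3)$ is uniformly invertible, because $p-P=(R\zeta-P\phi)/v$ and $\gamma>1$ makes the determinant nonzero. This gives $\int_0^T\!\int(\phi^2+\zeta^2)w^2\le C\int(\ldots)$, and the $C\delta$ terms are absorbed for $\delta$ small, which yields \eqref{important}.

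\textbf{Main obstacle.} I expect the hardest part to be the cross terms of size $O(1)\cdot w$ that survive the cancellation because the coefficients are variable: for instance $\gamma p/v$ versus $\gamma P/V$, the $\alpha$-dependent viscosity, and the nonlinear part of $p-P$. Linearizing is not an option, since the resulting cubic terms like $\int\phi^2|\psi|w$ are not time-integrable from $H^1$ bounds alone. This is why I keep $h_2=p-P$ fully nonlinear and use the exact $p_t$ equation. If some cross term still remains, I would try to absorb it with a small weight $\eta\int(\cdot)^2w^2$ and a dissipation term, choosing the coefficients of the three estimates so that the leftover quadratic form in $(\psi,p-P,h_3)\,gw$ has a favourable sign.
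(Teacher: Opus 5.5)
Your Step~2 cancellation does not take place, and this is the decisive point. To leading order $\psi_t\approx-(p-P)_x$ and $(p-P)_t\approx-\frac{\gamma p}{v}\psi_x$ (here $P=p_+$), so
\[
\Bigl\langle\psi_t,\frac{\gamma p}{v}\psi g^2\Bigr\rangle+\bigl\langle(p-P)_t,(p-P)g^2\bigr\rangle
\approx-\int\frac{\gamma p}{v}g^2\bigl[(p-P)\psi\bigr]_x\,dx
=2\int\frac{\gamma p}{v}(p-P)\psi\,gw\,dx+\int\Bigl(\frac{\gamma p}{v}\Bigr)_x(p-P)\psi\,g^2\,dx .
\]
Your own two bullets give the cross term a $+$ sign in both pairings, so nothing cancels.

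The terms $\int(p-P)\psi_xg^2$ and $\int\psi(p-P)_xg^2$ that you call good are not controlled by the dissipation. Each is linear in a derivative with an $O(1)$ weight, so its time integral can only be bounded by $O(\sqrt T)$. They cancel each other only by recombining into the total derivative above, and that total derivative regenerates the cross term.

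The surviving cross term cannot be handled either. The product $(p-P)\psi$ is an indefinite form, and it carries the weight $gw$ rather than $w^2$. Since $g=O(1)$ and $\|w\|_\infty=(1+t)^{-1/2}$, its space-time integral can only be bounded by $O(\sqrt T)$. No positive combination of your three estimates gives it a sign, and it cannot be absorbed into $\eta\iint(\cdot)^2w^2$. The fallback in your last paragraph fails for the same reason.

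This is structural. Lemma~\ref{hlm} captures the zero-speed diffusive mode. By contrast, $(\psi,p-P)$ is the acoustic pair with speeds $\pm\sqrt{\gamma p/v}$, and $\frac{\gamma p}{v}(p-P)\psi$ is exactly its energy flux crossing the weight.

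The missing idea is the paper's cross multiplier for the acoustic pair. Set $\tilde g(x,t)=g(\sqrt2x,t)/\sqrt{2(1+t)}$. Then $\tilde g_x=w^2$, $|\tilde g|\le C(1+t)^{-1/2}$ and $|\tilde g_t|\le C(1+t)^{-3/2}$. Multiply \eqref{pressure-difference} by $\psi\tilde g$ and \eqref{momentum-difference} by $(p-P)\tilde g$. The principal parts combine into
\[
\int\tilde g\Bigl[\frac{\gamma p}{v}\psi\psi_x+(p-P)(p-P)_x\Bigr]dx
=-\frac12\int\Bigl[(p-P)^2+\frac{\gamma p}{v}\psi^2\Bigr]w^2dx-\frac12\int\Bigl(\frac{\gamma p}{v}\Bigr)_x\psi^2\tilde g\,dx .
\]
Three things follow. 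The weighted acoustic norm appears directly. The time-boundary terms $\int(p-P)\psi\tilde g\,dx$ are bounded. The $(1+t)^{-1/2}$ decay of $\tilde g$ makes cubic remainders such as $\iint(\gamma p/v)_x\psi^2\tilde g$ integrable in time.

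That decay is exactly what your scheme lacks elsewhere. Several terms you count as harmless are cubic with one derivative and an $O(1)$ weight, for example:
\begin{itemize}
\item $\int(\gamma p/v)_x\psi(p-P)g^2$;
\item in the $h_3$ pairing, $-(\gamma-1)\int(p-p_+)\psi_xh_3g^2$, which comes from $R\zeta_t=-(\gamma-1)p\psi_x+\cdots$.
\end{itemize}
The hypotheses bound such terms at each time only by about $\|(\phi_x,\psi_x,\zeta_x)\|_2^{3/2}$, whose integral over $(0,T)$ is $O(T^{1/4})$.

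The paper avoids the second kind by applying Lemma~\ref{hlm} only to the exact relative entropy $R\log(v/V)+c_\nu\log(\theta/\Theta)$. Its equation \eqref{relative-entropy-mode} contains no term linear in $\psi_x$; it has only a heat-flux divergence, terms quadratic in derivatives, and $\widetilde R_2$.

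With these two replacements your Step~3 goes through as you describe: the equivalence of $(\phi,\zeta)$ with the entropy--pressure variables, and the absorption of the $C\delta$ terms.
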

The proof is given in Section~3. Under \eqref{space}, its constant is
$C(M)$; after the a priori estimates, it depends only on the data.
The additional composite-wave terms are treated in Section~4.

The next lemma records the properties of the Burgers solution; see \cite{MN-86}.

\begin{lemma}\label{bur-pro}
Let $w_l<w_r$ and $\tilde w=w_r-w_l\le\bar w$ for a fixed $\bar w>0$.
Then the problem \eqref{burgers} has a unique smooth global solution in time satisfying:
\begin{itemize}
\item[(1)] $w_l<w(x,t)<w_r$, $w_x>0$ $(x\in\mathbb{R},t>0)$.
\item[(2)] For $p\in[1,\infty]$, there exists some positive constant $C=C(p,w_l,\bar w)$ such that
for $\tilde w\geqq0$ and $t\geqq0$,
$$
\|w_x(t)\|_{L^p}\leq C\min\{\tilde w,\tilde w^{1/p}t^{-1+1/p}\},\quad
\|w_{xx}(t)\|_{L^p}\leq C\min\{\tilde w,t^{-1}\}.
$$
\item[(3)] If $w_l>0$, for any $(x,t)\in(-\infty,0]\times[0,\infty)$,
$$
|w(x,t)-w_l|\leq\tilde we^{-2(|x|+w_lt)},\quad
|w_x(x,t)|\leq2\tilde we^{-2(|x|+w_lt)}.
$$
\item[(4)] If $w_r<0$, for any $(x,t)\in[0,\infty)\times[0,\infty)$,
$$
|w(x,t)-w_r|\leq\tilde we^{-2(x+|w_r|t)},\quad
|w_x(x,t)|\leq2\tilde we^{-2(x+|w_r|t)}.
$$
\item[(5)] For the Riemann solution $w^r(x/t)$ of the scalar equation \eqref{burgers} with the Riemann
initial data
\begin{equation*}
w(x,0)=\left\{
\begin{array}{ll}
w_l,&\quad x<0,\\
w_r,&\quad x>0,
\end{array}
\right.
\end{equation*}
we have
$$
\lim_{t\rightarrow+\infty}\sup_{x\in\mathbb{R}}|w(x,t)-w^r(x/t)|=0.
$$
\end{itemize}

\end{lemma}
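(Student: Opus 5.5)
The proof uses the explicit characteristic representation of the solution of \eqref{burgers}, as in \cite{MN-86}. Write $w_0(x)=\frac{w_r+w_l}{2}+\frac{\tilde w}{2}\tanh x$, so that
\[
w_0'=\frac{\tilde w}{2}\,\mathrm{sech}^2x>0,\qquad |w_0''|=\tilde w\,\mathrm{sech}^2x\,|\tanh x|\le 2w_0'.
\]
The characteristic map $x=X(x_0,t)=x_0+w_0(x_0)t$ satisfies $\partial_{x_0}X=1+tw_0'(x_0)\ge1$. Hence $X(\cdot,t)$ is a smooth increasing diffeomorphism of $\mathbb R$ for every $t\ge0$. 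Setting $w(x,t)=w_0(x_0(x,t))$ then gives the unique smooth global solution, since no characteristics cross.

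\textbf{Part (1).} The bound $w_l<w<w_r$ is inherited from $w_0$. Differentiating gives
\[
w_x=\frac{w_0'(x_0)}{1+tw_0'(x_0)}>0.
\]

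\textbf{Part (2).} From the formula for $w_x$ we get the pointwise bound $w_x\le\min\{\tilde w/2,\,1/t\}$. Since $w$ is monotone between $w_l$ and $w_r$, we also have $\|w_x\|_{L^1}=\tilde w$. The interpolation $\|w_x\|_p\le\|w_x\|_\infty^{1-1/p}\|w_x\|_1^{1/p}$ then yields $\|w_x\|_p\le C\min\{\tilde w,\tilde w^{1/p}t^{-1+1/p}\}$.

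For the second derivative, differentiate once more:
\[
w_{xx}=\frac{w_0''(x_0)}{(1+tw_0'(x_0))^{3}}.
\]
Using $|w_0''|\le2w_0'$, we obtain the pointwise bounds $|w_{xx}|\le C\tilde w$ and $|w_{xx}|\le 2w_0'/(1+tw_0')^3\le 2/t$.

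For the $L^1$ norm, change variables $dx=(1+tw_0')\,dx_0$:
\[
\|w_{xx}\|_{L^1}=\int\frac{|w_0''(x_0)|}{(1+tw_0'(x_0))^{2}}\,dx_0\le C\tilde w.
\]
To get the $t^{-1}$ bound, substitute $y=tw_0'(x_0)=\frac{t\tilde w}{2}\mathrm{sech}^2x_0$ on each half-line. Then $|dy|=2y|\tanh x_0|\,dx_0$, and so $\tilde w\,\mathrm{sech}^2x_0|\tanh x_0|\,dx_0=t^{-1}|dy|$. The integral becomes at most $2t^{-1}\int_0^\infty(1+y)^{-2}dy$, so $\|w_{xx}\|_{L^1}\le C\min\{\tilde w,t^{-1}\}$. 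Interpolating with the $L^\infty$ bound gives the stated $L^p$ bound for every $p$.

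\textbf{Parts (3) and (4).} Let $w_l>0$ and $x\le0$. Since $x_0=x-w(x,t)t\le x-w_lt\le0$, we have $|x_0|\ge|x|+w_lt$. Then
\[
w-w_l=\tilde w\,\frac{e^{2x_0}}{1+e^{2x_0}}\le\tilde we^{-2|x_0|},\qquad
0<w_x\le w_0'(x_0)\le2\tilde we^{-2|x_0|},
\]
which gives (3). Part (4) follows by the symmetric argument with $x_0\ge x+|w_r|t$.

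\textbf{Part (5).} This is the only step requiring a case split, and I expect it to be the main (mild) obstacle. Fix $\xi=x/t$. Recall $w^r(\xi)=w_l$ for $\xi\le w_l$, $w^r(\xi)=\xi$ for $w_l\le\xi\le w_r$, and $w^r(\xi)=w_r$ for $\xi\ge w_r$. The characteristic relation gives $\xi=w+x_0/t$. Split according to whether $|x_0|\le t^{1/2}$ or not.
\begin{itemize}
\item If $|x_0|\le t^{1/2}$, then $|\xi-w|\le t^{-1/2}$. Since $w\in[w_l,w_r]$ and the clamp map is $1$-Lipschitz, $|w-w^r(\xi)|\le t^{-1/2}$.
\item If $x_0>t^{1/2}$, then $|w-w_r|\le\tilde we^{-2t^{1/2}}$. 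Also $\xi>w$, so $\xi\ge w_r-\tilde we^{-2t^{1/2}}$, whence $|w^r(\xi)-w_r|\le\tilde we^{-2t^{1/2}}$ and $|w-w^r(\xi)|\le2\tilde we^{-2t^{1/2}}$.
\item The case $x_0<-t^{1/2}$ is symmetric.
\end{itemize}
Therefore
\[
\sup_x|w-w^r(x/t)|\le C(t^{-1/2}+\tilde we^{-2\sqrt t})\to0.
\]
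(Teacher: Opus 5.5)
Your proof is correct and follows the same characteristic-method argument as \cite{MN-86}, which the paper cites for this lemma without reproducing it. The ingredients you use are the formulas $w_x=w_0'(x_0)/(1+tw_0'(x_0))$ and $w_{xx}=w_0''(x_0)/(1+tw_0'(x_0))^3$, the bound $|w_0''|\le 2w_0'$, and the change of variables $dx=(1+tw_0')\,dx_0$ giving the $L^1$ bound for $w_{xx}$; these are exactly what the paper itself invokes in the proofs of Lemmas~\ref{lem:contact-and-profile-estimates} and \ref{lem:remaining-profile-integrals}.
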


We divide $\mathbb{R}\times(0,\infty)$ into the three regions
$$
\Omega_{-}=\big\{(x,t)\big|2x<\lambda_{-}(v_{-}^m,s_{-})t\big\},
$$
$$
\Omega_{c}=\big\{(x,t)\big|\lambda_-(v_-^m,s_-)t\leq2x\leq\lambda_{+}(v_{+}^m,s_{+})t\big\},
$$
$$
\Omega_{+}=\big\{(x,t)\big|2x>\lambda_{+}(v_{+}^m,s_{+})t\big\}.
$$
We have
\begin{lemma}[\cite{H-W}]\label{rare-pro}
For any given left end state $(v_-,u_-,\t_-)$, suppose that \eqref{zheng} holds. Then the smooth rarefaction waves $(V_{\pm}^r,U_{\pm}^r,\T_{\pm}^r)$
constructed in \eqref{appro-rare} and the viscous contact wave $(V^{cd},U^{cd},\T^{cd})$ constructed in \eqref{contact} satisfy:
\begin{itemize}
\item[(1)] $(U_{\pm}^r)_x\geq0$,~$(x\in\mathbb{R},t>0)$.
\item[(2)] For $p\in[1,\infty]$, there exists a positive constant $C$ depending on the fixed positive end-state range
such that
$$
\|\big((V_{\pm}^r)_x,(U_{\pm}^r)_x,(\T_{\pm}^r)_x\big)(t)\|_{L^p}
\leq C\min\Big\{\d,\d^{1/p}t^{-1+1/p}\Big\}
$$
and
$$
\|\big((V_{\pm}^r)_{xx},(U_{\pm}^r)_{xx},(\T_{\pm}^r)_{xx}\big)(t)\|_{L^p}
\leq C\min\Big\{\d,t^{-1}\Big\}.
$$
\item[(3)] There exists a positive constant $C$ depending on the fixed positive end-state range
such that for
$$
c_0=\frac{1}{10}\min\Big\{|\lambda_-(v_-^m,s_-)|,\lambda_+(v_+^m,s_+),c_1\lambda_-^2(v_-^m,s_-),
c_1\lambda_+^2(v_+^m,s_+),1\Big\},
$$
we have 
\begin{equation*}
	(U_{\pm}^r)_x+|(V_{\pm}^r)_x|+|V_{\pm}^r-v_{\pm}^m|+|(\T_{\pm}^r)_x|+|\T_{\pm}^r-\t_{\pm}^m|
	\leq C\d e^{-c_0(|x|+t)}, \quad \text{in }\Omega_c,
\end{equation*}
\begin{equation*}
	\left\{
	\begin{array}{ll}
		|V^{cd}-v_{\mp}^m|+|V^{cd}_x|+|\T^{cd}-\t_{\mp}^m|+|U^{cd}_x|+|\T^{cd}_x|
		\leq C\d e^{-c_0(|x|+t)},\\
		(U_{\pm}^r)_x+|(V_{\pm}^r)_x|+|V_{\pm}^r-v_{\pm}^m|+|(\T_{\pm}^r)_x|+|\T_{\pm}^r-\t_{\pm}^m|
		\leq C\d e^{-c_0(|x|+t)},
	\end{array}
	\quad \text{in }\Omega_{\mp}.
	\right.
\end{equation*}
\item[(4)] For the rarefaction waves $(v_{\pm}^r,u_{\pm}^r,\t_{\pm}^r)(x/t)$ determined by
\eqref{euler} and \eqref{Riemann2}, it holds that 
$$
\lim_{t\rightarrow+\infty}\sup_{x\in\mathbb{R}}
\big|(V_{\pm}^r,U_{\pm}^r,\T_{\pm}^r)(x,t)-(v_{\pm}^r,u_{\pm}^r,\t_{\pm}^r)(x/t)\big|=0.
$$
\end{itemize}

\end{lemma}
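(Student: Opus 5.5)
The statement to prove is Lemma~\ref{rare-pro}, which is quoted from \cite{H-W}. I would derive it from Lemma~\ref{bur-pro}, Lemma~\ref{decay} and the explicit construction \eqref{appro-rare}.

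\emph{Parts (1) and (2).} Since $V_\pm^r$ is defined by $\lambda_\pm(V_\pm^r,s_\pm)=w_\pm$, and $\lambda_\pm(\cdot,s_\pm)$ is smooth and strictly monotone on the compact range of $v$, the implicit function theorem gives
\[
(V_\pm^r)_x=\frac{(w_\pm)_x}{\partial_v\lambda_\pm(V_\pm^r,s_\pm)}.
\]
Differentiating \eqref{appro-rare} also gives $(U_\pm^r)_x=-\lambda_\pm(V_\pm^r,s_\pm)(V_\pm^r)_x$.

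\begin{itemize}
\item For the 1-wave, $\lambda_-<0$ and $\partial_v\lambda_->0$, so $(U_-^r)_x=-\lambda_-(w_-)_x/\partial_v\lambda_->0$ because $(w_-)_x>0$ by Lemma~\ref{bur-pro}(1).
\item For the 3-wave, $\lambda_+>0$ and $\partial_v\lambda_+<0$, so $(U_+^r)_x>0$ as well.
\end{itemize}

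This gives (1). For (2), note that $(\Theta_\pm^r)_x=(1-\gamma)\theta_\pm v_\pm^{\gamma-1}(V_\pm^r)^{-\gamma}(V_\pm^r)_x$. Hence all first and second derivatives are bounded pointwise by $C(|w_x|+|w_{xx}|+w_x^2)$, with $C$ depending only on the compact end-state set. The term $w_x^2$ is estimated by $\|w_x\|_\infty\|w_x\|_{L^p}\le C\min\{\tilde w,t^{-1}\}$. The $L^p$ bounds then follow from Lemma~\ref{bur-pro}(2), using $\tilde w\le C\delta$, which holds since $\lambda_\pm$ is Lipschitz on the compact set.

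\emph{Part (3).} This is the main step and relies on the exponential tails in Lemma~\ref{bur-pro}(3)--(4), applied after a spatial shift. Consider the 1-rarefaction. Its right state has $w_r=\lambda_-(v_-^m,s_-)<0$, so Lemma~\ref{bur-pro}(4) gives, for $x\ge0$,
\[
|w-w_r|+|w_x|\le 3\tilde w e^{-2(x+|w_r|t)}.
\]
This is not yet enough, because $\Omega_c\cup\Omega_+$ also contains points with $x<0$, namely $\lambda_-(v_-^m,s_-)t/2\le x<0$. I would handle this region with the characteristic formula $w(x,t)=w_0(x_0)$, $x=x_0+w_0(x_0)t$. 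For $2x\ge w_rt$ we get $x_0=x-w_0(x_0)t\ge x-w_rt\ge |w_r|t/2+x$, which is positive and comparable to $|x|+t$. Then $|w_0(x_0)-w_r|\le\tilde we^{-2x_0}$ gives decay of order $e^{-c(|x|+t)}$. The bound on $w_x$ follows from $w_x=w_0'(x_0)/(1+w_0'(x_0)t)\le w_0'(x_0)\le 2\tilde we^{-2x_0}$. Transferring these bounds to $V_-^r$, $U_-^r$, $\Theta_-^r$ is done through the Lipschitz maps above. The 3-wave is symmetric.

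For the contact wave in $\Omega_\mp$ we have $|x|\ge c t$, so Lemma~\ref{decay} gives
\[
\delta(1+t)^{-k/2}e^{-c_1x^2/(1+t)}\le C\delta e^{-c_1x^2/(2(1+t))}e^{-c_1x^2/(2(1+t))}.
\]
One factor gives $e^{-c\,|x|}$ because $|x|/(1+t)\gtrsim \lambda^2$ for $t\ge1$ (the case $t\le1$ is trivial). The other factor gives $e^{-c t}$ since $x^2/(1+t)\gtrsim\lambda_\pm^2 t$. The choice of $c_0$ with $\frac1{10}\min\{\cdots\}$ is made exactly so that all these exponents dominate $c_0(|x|+t)$.

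\emph{Part (4).} Since $w_\pm\to w^r(x/t)$ uniformly by Lemma~\ref{bur-pro}(5), and $(v_\pm^r,u_\pm^r,\theta_\pm^r)$ is obtained from $w^r$ by the same Lipschitz maps as in \eqref{appro-rare}, uniform convergence follows.

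I expect the hard part to be the bookkeeping in (3), in particular the points in $\Omega_c$ with $x$ of the wrong sign. The characteristic argument above is meant to settle these.
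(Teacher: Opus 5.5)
Your arguments for (1), (2) and (4), and for the rarefaction half of (3), are correct. They are the standard derivation from Lemma~\ref{bur-pro}; the paper gives no proof and only cites \cite{H-W}. The characteristic bound $x_0\ge x+|w_r|t\ge|w_r|t/2$ on $\{2x\ge w_rt\}$ works together with $w_r-w_0(x_0)\le\tilde we^{-2x_0}$ and $w_x\le w_0'(x_0)\le2\tilde we^{-2x_0}$. Since $c_0\le\frac1{10}\min\{|w_r|,1\}$, this gives exactly the rate $c_0$, and it also covers $x\ge0$.

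\emph{The step that fails is your final sentence about the contact wave.} On $\Omega_-$, write $\lambda=\lambda_-(v_-^m,s_-)$ and take $t\ge1$.
\begin{itemize}
\item One has $|x|/(1+t)\ge|\lambda|/4$, not $\gtrsim\lambda^2$. Your splitting therefore gives $e^{-c_1|\lambda||x|/8-c_1\lambda^2t/16}$.
\item Neither $c_0\le c_1|\lambda|/8$ nor $c_0\le c_1\lambda^2/16$ follows from the definition of $c_0$.
\item No other splitting helps, because Lemma~\ref{decay} is the only information you use about $(V^{cd},\Theta^{cd})$. On the ray $|x|=|\lambda|t/2$ the Gaussian exponent is $c_1x^2/(1+t)=c_1\lambda^2t/4+O(1)$, while $c_0(|x|+t)=c_0(1+|\lambda|/2)t$.
\item If $c_0=c_1\lambda^2/10$ is the active entry of the minimum, domination requires $(2+|\lambda|)/20\le1/4$, i.e.\ $|\lambda|\le3$. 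For example, $c_1=0.01$ and $|\lambda_\pm(v_\pm^m,s_\pm)|=5$ give $c_0=0.025$, and then $0.0875\,t>0.0625\,t$.
\end{itemize}
So the explicit constant in the quoted statement cannot be obtained this way in general. You should prove (3) with a smaller rate, for instance with $c_1\lambda_\pm^2$ replaced by $c_1\lambda_\pm^2/(1+|\lambda_\pm|)$ in the minimum. To check this, write $|x|=\rho t$ with $\rho\ge|\lambda|/2$ and $t\ge1$. Then $c_1x^2/(1+t)\ge c_1\rho^2t/2\ge c_0(1+\rho)t$, because $\rho^2/(1+\rho)$ is increasing. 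The range $t\le1$ is absorbed into $C$, and the rarefaction part still holds since $c_0$ only got smaller.

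This defect comes from the quoted statement, not from your method, and it does not affect the rest of the paper. The paper only ever uses some positive exponential rate: the proof of Lemma~\ref{lem:contact-and-profile-estimates} chooses its own $c_0$, and Section~4 uses a generic $c$.
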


\begin{lemma}\label{lem:contact-and-profile-estimates}
The viscous contact wave satisfies $V_t=U_x$, $P=p_+$ and
$c_\nu\Theta_t+p_+U_x=(\kappa(\Theta)\Theta_x/V)_x$.
For the solution $w$ of \eqref{burgers},
\begin{align*}
\|w_x(t)\|_\infty+\|w_{xx}(t)\|_1
 &\le C\min\{w_r-w_l,(1+t)^{-1}\},\\
\|w_x(t)^2\|_1
 &\le C(w_r-w_l)\min\{w_r-w_l,(1+t)^{-1}\}.
\end{align*}
The relations in \eqref{appro-rare} transfer the displayed bounds for
$w_x$ and $w_{xx}$ to $(V_\pm^r,U_\pm^r,\Theta_\pm^r)$. For the viscous contact wave,
$P=p_+$. In the single-contact and composite cases,
\begin{equation}\label{profile:residual-integral}
\int_0^\infty
 (\|\widetilde R_1(t)\|_1+\|\widetilde R_2(t)\|_1)^{4/3}dt
 \le C\delta^{1/3},\qquad 0\le\delta\le1.
\end{equation}
\end{lemma}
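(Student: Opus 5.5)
The lemma has three parts, and I would prove them in order: the algebraic identities for the contact wave, the Burgers bounds, and the residual integral \eqref{profile:residual-integral}.

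\textbf{Contact-wave identities.} These follow from the construction \eqref{contact}.
\begin{itemize}
\item Since $V=R\Theta/p_+$, we get $P=R\Theta/V=p_+$ at once.
\item Differentiating gives $V_t=R\Theta_t/p_+$. Using \eqref{1.8} and $c_\nu=R/(\gamma-1)$, this equals $U_x$ with $U=\kappa_1(\gamma-1)\Theta_x/(\gamma R\Theta)$. Indeed, $\frac{R}{p_+}\cdot\frac{\tilde\kappa(\gamma-1)p_+}{\gamma R^2}(\Theta^{\beta-1}\Theta_x)_x$ is exactly $\partial_x U$.
\item Substituting $U_x=R\Theta_t/p_+$ into $c_\nu\Theta_t+p_+U_x$ gives $(c_\nu+R)\Theta_t=\frac{\gamma R}{\gamma-1}\Theta_t$. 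By \eqref{1.8} this equals $(\tilde\kappa\Theta^{\beta}\Theta_x/V)_x$, because $p_+/(R\Theta)=1/V$.
\end{itemize}

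\textbf{Burgers bounds.} I would take these from Lemma~\ref{bur-pro}(2) with $p=\infty$ and $p=1$. That lemma gives $\|w_x\|_\infty+\|w_{xx}\|_1\le C\min\{\tilde w,t^{-1}\}$. To replace $t^{-1}$ by $(1+t)^{-1}$, split according to time.
\begin{itemize}
\item For $t\ge1$, we have $t^{-1}\le 2(1+t)^{-1}$.
\item For $t\le1$, the bound is $C\tilde w$. Since $\tilde w\le\bar w$ and $(1+t)^{-1}\ge1/2$, we get $\tilde w\le C_{\bar w}\min\{\tilde w,(1+t)^{-1}\}$.
\end{itemize}
Then $\|w_x^2\|_1\le\|w_x\|_\infty\|w_x\|_1\le C\tilde w\min\{\tilde w,(1+t)^{-1}\}$, using $\|w_x\|_1=w_r-w_l$ from the monotonicity in Lemma~\ref{bur-pro}(1). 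The transfer to $(V^r_\pm,U^r_\pm,\Theta^r_\pm)$ uses the chain rule on \eqref{appro-rare}. Each profile is a smooth function of $w$ on the compact range $[w_l,w_r]$, so its first and second $x$-derivatives are bounded by $C|w_x|$ and $C(|w_{xx}|+w_x^2)$, respectively. Finally, $\tilde w\le C\delta$.

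\textbf{Residual integral, single contact.} By the pointwise bounds following Lemma~\ref{decay}, integrating the Gaussian in $x$ gains a factor $(1+t)^{1/2}$. This gives $\|\widetilde R_1\|_1\le C\delta(1+t)^{-1}$ and $\|\widetilde R_2\|_1\le C\delta(1+t)^{-3/2}$. Hence the integral is at most $C\delta^{4/3}\int_0^\infty(1+t)^{-4/3}dt\le C\delta^{1/3}$ when $\delta\le1$.

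\textbf{Residual integral, composite wave.} Each rarefaction profile solves the Euler system exactly, and the contact profile satisfies the identities above with $p^m$ in place of $p_+$. Subtracting these equations from \eqref{profile-residuals} splits $\widetilde R_1$ and $\widetilde R_2$ into three groups.
\begin{itemize}
\item \emph{Contact residuals.} These are handled as in the single-contact case.
\item \emph{Viscous rarefaction terms.} These are $(\mu_1(U^r_\pm)_x/V)_x$ and $(\kappa_1(\Theta^r_\pm)_x/V)_x$, together with the rarefaction parts of $\mu_1U_x^2/V$. Expanding the derivatives bounds them by $|U^r_{xx}|+|\Theta^r_{xx}|+|(V,U,\Theta)^r_x|^2+|(V,\Theta)_x|\,|(U,\Theta)^r_x|$. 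By the transferred Burgers bounds, their $L^1$ norm is $\le C\min\{\delta,(1+t)^{-1}\}$. The mixed products with contact derivatives are covered by the interaction estimates in the next item.
\item \emph{Interaction terms.} These are $P-p^m-\sum(P^r_\pm-p^m_\pm)$, products of derivatives of different waves, and the differences $\mu_1-\mu(\Theta^{cd})$ and $\kappa_1-\kappa(\Theta^{cd})$ multiplying contact derivatives. In each of $\Omega_-$, $\Omega_c$ and $\Omega_+$, at most one wave is nontrivial up to errors of size $C\delta e^{-c_0(|x|+t)}$, by Lemma~\ref{rare-pro}(3). So these terms are bounded pointwise by $C\delta e^{-c_0(|x|+t)}$, and their $L^1$ norms by $C\delta e^{-c_0t}$.
\end{itemize}
Summing the three groups gives $\|\widetilde R_1\|_1+\|\widetilde R_2\|_1\le C\min\{\delta,(1+t)^{-1}\}+C\delta e^{-c_0t}$. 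Splitting the time integral at $t=1/\delta$ gives
\[
\int_0^{1/\delta}\delta^{4/3}dt+\int_{1/\delta}^\infty(1+t)^{-4/3}dt\le C\delta^{1/3},
\]
and the exponential piece contributes $C\delta^{4/3}$. This explains the exponent $1/3$.

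\textbf{Main obstacle.} I expect the hardest part to be the bookkeeping of the interaction terms, especially showing that the pressure mismatch and the coefficient differences really vanish to the stated exponential order in every region. That is where Lemma~\ref{rare-pro}(3) has to be applied with care.
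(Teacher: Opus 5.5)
Your proposal is correct and follows the paper's proof essentially step by step. The contact identities come from \eqref{1.8}, and the rarefaction bounds are transferred through \eqref{appro-rare} by the chain rule. The composite residuals are split into contact, viscous-rarefaction and exponentially small interaction pieces, and the time integral is then cut at $t=1/\delta$.

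There are only two minor differences. First, you take the Burgers bounds from Lemma~\ref{bur-pro}(2) with a $t\le1$/$t\ge1$ split, while the paper computes them directly from the characteristic formula $w_x(x,t)=w_x(y,0)/(1+tw_x(y,0))$. Second, some second derivatives of the contact wave, such as $\Theta^{cd}_{xx}$, multiply coefficient differences that are not small in $\Omega_\mp$, and Lemma~\ref{rare-pro}(3) does not list them. For these you need the Gaussian bounds of Lemma~\ref{decay} together with $x^2/(1+t)\ge c(|x|+t)$ for $|x|\ge ct$, $t\ge1$, as the paper uses.
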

\begin{proof}
By \eqref{1.8},
\begin{align*}
V_t=\frac R{p_+}\Theta_t
 &=\frac{\tilde\kappa(\gamma-1)}{\gamma R}
                    (\Theta^{\beta-1}\Theta_x)_x=U_x.
\end{align*}
Since $P=p_+$, it follows that
\[
c_\nu\Theta_t+p_+U_x
 =\frac{\gamma R}{\gamma-1}\Theta_t
 =\left(\frac{\kappa(\Theta)\Theta_x}V\right)_x.
\]
Differentiating $x=y+tw(y,0)$ and $w(x,t)=w(y,0)$, we obtain
\[
w_x(x,t)=\frac{w_x(y,0)}{1+tw_x(y,0)},
\qquad
w_{xx}(x,t)=\frac{w_{xx}(y,0)}{(1+tw_x(y,0))^3}.
\]
The function $w_x(y,0)$ increases from zero to $(w_r-w_l)/2$ and
then decreases to zero. Changing variables with
$dx=(1+tw_x(y,0))\,dy$ yields
\[
\|w_{xx}(t)\|_1
=\int\frac{|w_{xx}(y,0)|}{(1+tw_x(y,0))^2}\,dy
=\frac{w_r-w_l}{1+t(w_r-w_l)/2}.
\]
Moreover, $0\le w_x\le(w_r-w_l)/(2+t(w_r-w_l))$ and
$\int w_x\,dx=w_r-w_l$, so
\begin{align*}
\int w_x^2dx&\le\|w_x\|_\infty\int w_xdx
 \le C(w_r-w_l)\min\{w_r-w_l,(1+t)^{-1}\}.
\end{align*}
Differentiating the relations in \eqref{appro-rare} gives
\begin{align*}
|\partial_x(V_\pm^r,U_\pm^r,\Theta_\pm^r)|
&\le C|(w_\pm)_x|,\\
|\partial_x^2(V_\pm^r,U_\pm^r,\Theta_\pm^r)|
&\le C\bigl(|(w_\pm)_{xx}|+|(w_\pm)_x|^2\bigr).
\end{align*}

For the contact wave,
\[
\|\widetilde R_1(t)\|_1\le C\delta(1+t)^{-1},\qquad
\|\widetilde R_2(t)\|_1\le C\delta^2(1+t)^{-3/2}.
\]
Each rarefaction satisfies the Euler equations
\[
\begin{cases}
(U_\pm^r)_t+\left(R\Theta_\pm^r/V_\pm^r\right)_x=0,\\
c_\nu(\Theta_\pm^r)_t
              +(R\Theta_\pm^r/V_\pm^r)(U_\pm^r)_x=0.
\end{cases}
\]
Thus its residual terms satisfy
\begin{align*}
&\left\|\left(\frac{\mu(\Theta_\pm^r)(U_\pm^r)_x}{V_\pm^r}\right)_x\right\|_1
 +\left\|\left(\frac{\kappa(\Theta_\pm^r)(\Theta_\pm^r)_x}{V_\pm^r}\right)_x\right\|_1
 +\left\|\frac{\mu(\Theta_\pm^r)|(U_\pm^r)_x|^2}{V_\pm^r}\right\|_1\\
&\qquad\le C\bigl(\|(w_\pm)_{xx}\|_1+\|(w_\pm)_x\|_2^2\bigr)
 \le C\min\{\delta,(1+t)^{-1}\}.
\end{align*}

Choose $c_0>0$ with
$2c_0<\inf|\lambda_\pm(V_\pm^r,s_\pm)|$.
The characteristic formulas for $w_\pm$ and the Gaussian decay of
$(V^{cd},U^{cd},\Theta^{cd})$ imply,
for $t\ge1$,
\begin{align*}
&\sum_{\pm}|(V_\pm^r,U_\pm^r,\Theta_\pm^r)
                         -(v_\pm^m,u^m,\theta_\pm^m)|
 \le C\delta e^{-c(|x|+t)},\qquad |x|\le c_0t/2,\\
&|(V^{cd},U^{cd},\Theta^{cd})-(v_\pm^m,u^m,\theta_\pm^m)|\\
&\quad+|(V_\mp^r,U_\mp^r,\Theta_\mp^r)-(v_\mp^m,u^m,\theta_\mp^m)|
 \le C\delta e^{-c(|x|+t)},\qquad \pm x\ge c_0t/2.
\end{align*}
Here we used $x^2/(1+t)\ge c(|x|+t)$ for $|x|\ge c_0t/2$.
Subtracting the equations for $(V^{cd},U^{cd},\Theta^{cd})$,
$(V_-^r,U_-^r,\Theta_-^r)$, and $(V_+^r,U_+^r,\Theta_+^r)$, and using
the displayed exponential bounds in $\Omega_c$, $\Omega_-$, and $\Omega_+$, we obtain,
for $0<\delta\le1$,
\begin{align*}
&\int_0^\infty
  (\|\widetilde R_1(t)\|_1+\|\widetilde R_2(t)\|_1)^{4/3}dt\\
&\quad\le C\int_0^\infty\left[
 \delta^{4/3}(1+t)^{-4/3}+\delta^{8/3}(1+t)^{-2}
       +\min\{\delta,(1+t)^{-1}\}^{4/3}+\delta^{4/3}e^{-ct}\right]dt\\
&\quad\le C\delta^{4/3}+C\delta^{8/3}
       +C\int_0^{\delta^{-1}}\delta^{4/3}dt
       +C\int_{\delta^{-1}}^\infty(1+t)^{-4/3}dt
 \le C\delta^{1/3}.
\end{align*}
When $\delta=0$, all three wave functions are constant; hence their
spatial derivatives and the residuals $\widetilde R_1$, $\widetilde R_2$
vanish.
\end{proof}

\begin{lemma}\label{lem:remaining-profile-integrals}
For some fixed $r>0$, the functions $(V,U,\Theta)$ defined by
\eqref{contact} in the single-contact case and by \eqref{ansatz} in the
composite case satisfy
\begin{align}
&\int_0^\infty\bigl(\|U_x\|_\infty^2
       +\|(\Theta_-^r)_x\|_\infty^2
       +\|(\Theta_+^r)_x\|_\infty^2\bigr)dt\nonumber\\
&\quad+\int_0^\infty\!\int
 (U_{xx}^2+\Theta_{xx}^2+V_x^4+\Theta_x^4
               +\widetilde R_1^2+\widetilde R_2^2)dxdt
 \le C\delta^r,\label{profile:integrated-derivatives}\\
&\|U_t+P_x\|_{L^1(0,\infty;L^2)}
 +\|U_t+P_x\|_{L^2((0,\infty)\times\mathbb R)}
 \le C\delta^r,\label{profile:momentum-defect}\\
&\sup_{x,t}|(V_x,\Theta_x,U_x,\Theta_t)|\le C\delta.
\label{profile:first-supremum}
\end{align}
For a single contact wave the rarefaction derivatives are zero.
Furthermore,
\begin{align*}
&\left\|\left(\frac{\kappa(\Theta)\Theta_x}{V}\right)_x
                                             \right\|_\infty
 \le C(1+t)^{-1}.
\end{align*}
The time integrals satisfy
\begin{align*}
&\int_0^\infty
 \bigl(\|\Theta_t\|_\infty^2+\|U_x\Theta_x\|_\infty\bigr)dt\\
&\quad+\int_0^\infty\left[
 \left\|\left(\frac{U_x}{V}\right)_x\right\|_\infty^2
 +\left\|\frac{U_x\Theta_x}{V}\right\|_\infty^2\right]dt\le C.
\end{align*}
For a sufficiently broad contact Gaussian,
\begin{align*}
|V_x|+|\Theta_x|+|P_x|
&\le C\bigl((U_-^r)_x+(U_+^r)_x\bigr)
 +\frac{C\delta}{\sqrt{1+t}}e^{-cx^2/(1+t)},\\
|U_x|+|\Theta_t|
&\le C\bigl((U_-^r)_x+(U_+^r)_x\bigr)
 +\frac{C\delta}{1+t}e^{-cx^2/(1+t)}.
\end{align*}
All constants are uniform for sufficiently small total strength.
\end{lemma}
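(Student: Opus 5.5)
The plan is to verify each bound in Lemma~\ref{lem:remaining-profile-integrals} directly from the explicit structure of the profiles. For the contact part we use Lemma~\ref{decay}; for the rarefaction parts we use Lemmas~\ref{bur-pro}, \ref{rare-pro} and \ref{lem:contact-and-profile-estimates}. The interaction terms are handled by the region decomposition $\Omega_-\cup\Omega_c\cup\Omega_+$.

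\textbf{Step 1: pointwise derivative bounds.} First we record pointwise bounds for every derivative appearing in the statement. Differentiating \eqref{ansatz} gives
\[
|\partial_x^k(V,U,\Theta)|\le |\partial_x^k(V^{cd},U^{cd},\Theta^{cd})|+\sum_\pm|\partial_x^k(V^r_\pm,U^r_\pm,\Theta^r_\pm)|,
\]
and the contact part is bounded by $C\delta(1+t)^{-k/2}e^{-c_1x^2/(1+t)}$ by Lemma~\ref{decay}. For the rarefactions, \eqref{appro-rare} and $\lambda_\pm'\ne0$ on the compact state range give $|(V^r_\pm,\Theta^r_\pm)_x|\le C(U^r_\pm)_x$, since $(U^r_\pm)_x=\mp\lambda_\pm (V^r_\pm)_x$ is comparable to $|(w_\pm)_x|$. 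Second derivatives are bounded by $C(|(w_\pm)_{xx}|+(w_\pm)_x^2)$.

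This immediately gives \eqref{profile:first-supremum}, using $\|w_x\|_\infty\le C\delta$ and $\Theta_t$ from the equations. It also gives the two final pointwise comparison inequalities. The only point there is that $P_x$ and $\Theta_t$ contain the contact terms $p_+$-cancellation, and in the composite case cross terms like $\Theta^{cd}_xV^r_\pm$. These are rewritten using $(V^{cd}-v^m_\pm)$ and the bounds of Lemma~\ref{rare-pro}(3). Here we choose the Gaussian constant $c<c_1$ small, so that the exponentially small interaction terms $C\delta e^{-c_0(|x|+t)}$ are absorbed into $\delta(1+t)^{-1}e^{-cx^2/(1+t)}$ ("sufficiently broad Gaussian"). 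This works because $e^{-c_0(|x|+t)}\le C(1+t)^{-1}e^{-cx^2/(1+t)}$ whenever $c\le c_0$-dependent.

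\textbf{Step 2: the integrated estimates \eqref{profile:integrated-derivatives}.} For these we interpolate. The contact contributions give, for example,
\[
\int_0^\infty\|U^{cd}_x\|_\infty^2dt\le C\delta^2\int(1+t)^{-2}dt
\]
and
\[
\int\!\!\int (U^{cd}_{xx})^2\le C\delta^2\int(1+t)^{-5/2}dt.
\]
The rarefaction contributions use $\|w_x\|_\infty\le C\min\{\delta,(1+t)^{-1}\}$, so
\[
\int_0^\infty\min\{\delta,(1+t)^{-1}\}^2dt\le C\delta .
\]
The remaining terms are handled similarly:
\[
\int\!\!\int w_{xx}^2\le\int\|w_{xx}\|_\infty\|w_{xx}\|_1,
\]
with the sup norm bounded via the explicit characteristic formula, $|w_{xx}|\le C\delta\min\{\delta,t^{-1}\}$-type. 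For $V_x^4$ we use $\|w_x\|_\infty^2\|w_x\|_2^2$. The residuals $\widetilde R_i^2$ are bounded by $\|\widetilde R_i\|_\infty\|\widetilde R_i\|_1$, reusing the splitting from the proof of \eqref{profile:residual-integral}.

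Every piece yields a power $\delta^{r}$, and we take $r$ the minimum, e.g.\ $r=1/3$. The quantity $U_t+P_x$ equals $\widetilde R_1+(\mu_1U_x/V)_x$, so \eqref{profile:momentum-defect} follows from the same bounds. The $L^1_tL^2_x$ norm is the delicate one. Since $\|(w_{xx},w_x^2)\|_2\le C\min\{\delta,(1+t)^{-3/2}\}^{\theta}$-type, a decay faster than $(1+t)^{-1}$ is needed. We expect this by interpolating $\|w_{xx}\|_2^2\le\|w_{xx}\|_\infty\|w_{xx}\|_1\le C\min\{\delta, t^{-1}\}\,t^{-1}$-type bounds, which yields $t^{-1}$ in $L^2$, only borderline. \emph{This is the main obstacle.} We would instead use the sharper Matsumura--Nishihara bound $\|w_{xx}\|_{L^2}\le C\min\{\delta,\delta^{-1/2}... t^{-1}\}$ and, failing that, extract decay from the explicit formula $w_{xx}=w_{xx}(y,0)/(1+tw_x(y,0))^3$. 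After changing variables this gives $\|w_{xx}\|_2^2\le\int |w_{xx}(y,0)|^2(1+tw_x(y,0))^{-5}dy$, and the tails of $\tanh$ then give a power $t^{-3/2}\log$-type decay that is integrable.

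\textbf{Step 3: the remaining unweighted bounds.} Last, we handle the constant-$C$ (not $\delta^r$) time integrals and $\|(\kappa\Theta_x/V)_x\|_\infty\le C(1+t)^{-1}$. These follow from the Step~1 pointwise bounds together with $\|w_x\|_\infty+\|w_{xx}\|_\infty\le C(1+t)^{-1}$ and $\|U_x\Theta_x\|_\infty\le C\min\{\delta,(1+t)^{-1}\}^2$, all of which are integrable or square-integrable in time. All constants depend only on the compact set $\mathcal K$ and $\bar w$, hence are uniform for small total strength.
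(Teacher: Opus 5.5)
\textbf{The gap is in the $L^1(0,\infty;L^2)$ part of \eqref{profile:momentum-defect} in the composite case.} You write $U_t+P_x=\widetilde R_1+(\mu_1U_x/V)_x$ and bound the two pieces separately. Once a rarefaction is present, neither piece lies in $L^1(0,\infty;L^2)$. Your proposed rescue, a $t^{-3/2}\log$-type $L^2$ decay of $w_{xx}$ coming from the $\tanh$ tails, is false. The characteristic formula gives
\[
\|w_{xx}(t)\|_2^2=\int\frac{w_{xx}(y,0)^2}{(1+tw_x(y,0))^5}\,dy .
\]
In the tails, $w_x(y,0)=2\tilde we^{-2|y|}(1+O(e^{-2|y|}))$ and $|w_{xx}(y,0)|=4\tilde we^{-2|y|}(1+O(e^{-2|y|}))$. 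Substituting $\sigma=2t\tilde we^{-2|y|}$ on each half-line yields
\[
\|w_{xx}(t)\|_2^2=\frac{4+o(1)}{t^2}\int_0^\infty\frac{\sigma\,d\sigma}{(1+\sigma)^5}=\frac{1+o(1)}{3t^2},\qquad t\to\infty .
\]
So the rate $t^{-1}$ in Lemma~\ref{bur-pro} is sharp in $L^2$, and $\int^\infty\|w_{xx}\|_2\,dt=\infty$.

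This transfers to the profile. We have $(U^r_\pm)_x=-w_\pm(w_\pm)_x/\partial_v\lambda_\pm(V^r_\pm,s_\pm)$, so $(U^r_\pm)_{xx}$ is a multiple of $(w_\pm)_{xx}$, bounded above and below, plus $O((w_\pm)_x^2)$. Moreover $\|(w_\pm)_x^2\|_2\le\|(w_\pm)_x\|_\infty\|(w_\pm)_x\|_2=o(t^{-1})$. Hence $\|(\mu_1U_x/V)_x(t)\|_2\ge ct^{-1}$ for large $t$, and therefore $\widetilde R_1\notin L^1(0,\infty;L^2)$ as well. The two pieces cancel to leading order, so no separate estimate of them can close.

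\textbf{The missing idea} is to avoid introducing the viscous term at all. The smooth rarefactions \eqref{appro-rare} are exact solutions of the inviscid system \eqref{euler}, so $(U^r_\pm)_t+(R\Theta^r_\pm/V^r_\pm)_x=0$. Consequently
\[
U_t+P_x=U^{cd}_t+\Bigl(P-\frac{R\Theta^r_-}{V^r_-}-\frac{R\Theta^r_+}{V^r_+}\Bigr)_x .
\]
The first term satisfies $|U^{cd}_t|\le C\delta(1+t)^{-3/2}e^{-cx^2/(1+t)}$, so $\|U^{cd}_t\|_2\le C\delta(1+t)^{-5/4}$. The second term is a pure interaction term. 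Using $p^mV^{cd}=R\Theta^{cd}$, every surviving product contains a derivative of one wave times the deviation of a different wave from the corresponding middle state. Lemma~\ref{rare-pro}(3) bounds each such product by $C\delta e^{-c(|x|+t)}$ in each of $\Omega_-,\Omega_c,\Omega_+$. Thus $\|U_t+P_x\|_2\le C\delta(1+t)^{-5/4}+C\delta e^{-ct}$, which is both integrable and square-integrable in time; this is the paper's argument. In the single-contact case your decomposition would work, since every term then decays like $\delta(1+t)^{-5/4}$ in $L^2$.

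Three smaller points:
\begin{itemize}
\item The inequality $e^{-c_0(|x|+t)}\le C(1+t)^{-1}e^{-cx^2/(1+t)}$ that you invoke is false for fixed $t$ as $|x|\to\infty$. It is also unnecessary: the final pointwise comparisons follow from the triangle inequality, with no interaction terms.
\item The sup norm satisfies $\|w_{xx}\|_\infty\le C\min\{\tilde w,(1+t)^{-1}\}$, not $O(\delta\min\{\delta,t^{-1}\})$. This weaker bound still suffices for the $L^2_{t,x}$ bounds in \eqref{profile:integrated-derivatives}.
\item $\|U_x\Theta_x\|_\infty$ contains contact--rarefaction cross terms of size $\delta(1+t)^{-1/2}\min\{\delta,(1+t)^{-1}\}$, not only $\min\{\delta,(1+t)^{-1}\}^2$. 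These are still integrable in time.
\end{itemize}
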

\begin{proof}
The characteristic formulas and $|w_{xx}(y,0)|\le2w_x(y,0)$ give
\begin{align*}
&\|w_{xx}(t)\|_\infty
 \le2\sup_y\frac{w_x(y,0)}{(1+tw_x(y,0))^3}
 \le C\min\{w_r-w_l,(1+t)^{-1}\}.
\end{align*}
The identity $\|w_{xx}\|_2^2\le\|w_{xx}\|_\infty\|w_{xx}\|_1$, the
bound $\|w_{xx}(t)\|_\infty\le C\min\{w_r-w_l,(1+t)^{-1}\}$, and the
$L^1$ bound for $w_{xx}$ in Lemma~\ref{lem:contact-and-profile-estimates} give
\begin{align*}
&\int_0^\infty\|w_{xx}\|_2^2dt
 \le\int_0^\infty\|w_{xx}\|_\infty\|w_{xx}\|_1dt\\
&\qquad\le C\int_0^\infty\min\{w_r-w_l,(1+t)^{-1}\}^2dt
 \le C(w_r-w_l).
\end{align*}
Since $\int w_x\,dx=w_r-w_l$,
\begin{align*}
&\int_0^\infty\!\int w_x^4dxdt
 \le(w_r-w_l)\int_0^\infty\|w_x\|_\infty^3dt
 \le C(w_r-w_l)^3.
\end{align*}
The derivative relations in \eqref{appro-rare}, the $L^2_{t,x}$ bound for
$(w_\pm)_{xx}$, and the $L^4_{t,x}$ bound for $(w_\pm)_x$ yield
\begin{align*}
&\int_0^\infty\!\int\bigl(|\partial_x^2(V_\pm^r,U_\pm^r,\Theta_\pm^r)|^2
                         +|\partial_x(V_\pm^r,U_\pm^r,\Theta_\pm^r)|^4\bigr)dxdt\\
&\qquad\le C\int_0^\infty\!\int\bigl(|(w_\pm)_{xx}|^2+|(w_\pm)_x|^4\bigr)dxdt
 \le C\delta.
\end{align*}
For the viscous contact wave $(V^{cd},U^{cd},\Theta^{cd})$,
\begin{align*}
&\|\Theta_x^{cd}(t)\|_4^4\le C\delta^4(1+t)^{-3/2},
 \qquad\|\Theta_{xx}^{cd}(t)\|_2^2\le C\delta^2(1+t)^{-3/2}.
\end{align*}
The $L^4$ estimate for $\Theta_x^{cd}$, the $L^2$ estimate for
$\Theta_{xx}^{cd}$, and the interaction estimates in
Lemma~\ref{lem:contact-and-profile-estimates} give
\begin{align*}
&\int_0^\infty\!\int(\widetilde R_1^2+\widetilde R_2^2)dxdt\\
&\qquad\le C\int_0^\infty\!\int
                 (U_{xx}^2+\Theta_{xx}^2+V_x^4+U_x^4+\Theta_x^4)dxdt
                 +C\delta^2\int_0^\infty e^{-ct}dt
 \le C\delta^r.
\end{align*}
The rarefaction equations give
\[
U_t+P_x
 =(U^{cd})_t+
 \left(P-\frac{R\Theta_-^r}{V_-^r}-\frac{R\Theta_+^r}{V_+^r}\right)_x.
\]
Hence
\begin{align*}
\|U_t+P_x\|_2
 &\le C\delta(1+t)^{-5/4}+C\delta e^{-ct}.
\end{align*}
Integrating in time, we obtain
\begin{align*}
&\int_0^\infty\|U_t+P_x\|_2dt
 +\left(\int_0^\infty\|U_t+P_x\|_2^2dt\right)^{1/2}\\
 &\le C\delta\int_0^\infty[(1+t)^{-5/4}+e^{-ct}]dt\\
 &\quad+C\delta\left(\int_0^\infty[(1+t)^{-5/2}+e^{-ct}]dt\right)^{1/2}
 \le C\delta.
\end{align*}
By \eqref{appro-rare},
\begin{align*}
|(V_\pm^r)_x|
 &=\frac{(U_\pm^r)_x}{|\lambda_\pm(V_\pm^r,s_\pm)|}
 \le C(U_\pm^r)_x,
 \qquad |(\Theta_\pm^r)_x|\le C|(V_\pm^r)_x|.
\end{align*}
Combining \eqref{appro-rare}, which bounds $(V_\pm^r)_x$ and
$(\Theta_\pm^r)_x$ by $(U_\pm^r)_x$, with
the Gaussian decay of $(V^{cd},U^{cd},\Theta^{cd})$ and the identity
$P_x=R\Theta_x/V-R\Theta V_x/V^2$ gives the pointwise inequalities in
the statement. Also,
\begin{align*}
\left|\left(\frac{\kappa(\Theta)\Theta_x}{V}\right)_x\right|
 &\le C(|\Theta_{xx}|+\Theta_x^2+|V_x\Theta_x|)
 \le C(1+t)^{-1}.
\end{align*}
Finally, using $(U_x/V)_x=U_{xx}/V-U_xV_x/V^2$, we find
\begin{align*}
&\int_0^\infty\!\left[
 \|\Theta_t\|_\infty^2+\|U_x\Theta_x\|_\infty
 +\left\|\left(\frac{U_x}{V}\right)_x\right\|_\infty^2
 +\left\|\frac{U_x\Theta_x}{V}\right\|_\infty^2\right]dt\\
&\quad\le C\int_0^\infty\left[
 \min\{\delta,(1+t)^{-1}\}^2
 +\delta(1+t)^{-1/2}\min\{\delta,(1+t)^{-1}\}
 +\delta^2(1+t)^{-3/2}\right]dt\\
&\quad\le C\delta+C\delta\int_0^\infty(1+t)^{-3/2}dt
 \le C.
\end{align*}
\end{proof}

\section{Proof of Theorem \ref{theorem1}}
\setcounter{equation}{0}
Subtracting \eqref{contact equation} from \eqref{ns}, we obtain
\begin{equation}\label{perturb}
\left\{
\begin{array}{ll}
\di \phi_t-\psi_x=0,\\
\di \psi_t+\left(p-p_+\right)_x=\tilde{\mu}\left(\frac{\t^\alpha u_x}{v}-\frac{\T^\alpha U_x}{V}\right)_x-\widetilde R_1,\\
\di c_{\nu}\zeta_t+pu_x-p_+U_x=\tilde{\kappa}\left(\frac{\t^{\beta}\t_x}{v}-\frac{\Theta^{\beta}\Theta_x}{V}\right)_x
+\tilde{\mu}\left(\frac{\t^\alpha u_x^2}{v}-\frac{\T^\alpha U^2_x}{V}\right)-\widetilde R_2,\\
\di (\phi,\psi,\zeta)(x,0)=(\phi_0,\psi_0,\zeta_0)(x),\quad x\in \mathbb{R}.
\end{array}
\right.
\end{equation}
The local Cauchy problem has a unique positive strong solution on
$[0,T]$. The perturbation $(\phi,\psi,\zeta)$ is continuous in time
with values in $H^1$, and
\begin{equation}\label{local0:class}
\begin{aligned}
&\sup_{0\le t\le T}\|(\phi,\psi,\zeta)(t)\|_{H^1}^2\\
&\quad+\int_0^T\left(
 \|\phi_t\|_{H^1}^2+\|(\psi,\zeta)\|_{H^2}^2
 +\|(\psi_t,\zeta_t)\|_2^2\right)dt\le C(T).
\end{aligned}
\end{equation}
Local linearization and $H^1$ energy estimates give local existence and
\eqref{local0:class}; cf.\ \cite{huang-ma-shi}. The existence time and the
local bound depend on the initial $H^1$ norm and positive lower bounds,
uniformly for $0\le\alpha\le1$, and continuation is possible as long as
$\| (\phi,\psi,\zeta)(t)\|_{H^1}$ remains finite and $v$, $\theta$ retain
positive lower bounds. For the a priori estimates, assume
\begin{equation}\label{space}
\begin{aligned}
&M^{-1}\le v(x,t),\theta(x,t)\le M,\\
&\sup_{0\le t\le T}\|(\phi,\psi,\zeta)(t)\|_{H^1}^2
+\int_0^T\left(\|\phi_x\|_2^2+\|(\psi_x,\zeta_x)\|_{H^1}^2
+\|(\psi_t,\zeta_t)\|_2^2\right)dt\le M.
\end{aligned}
\end{equation}
Here $M$ is fixed after the estimates. The constants $c,C,C_0$ depend
only on the data in Theorem~\ref{theorem1}; $C(M)$ also depends on $M$.
All space-time integrals below are over $\mathbb R\times(0,t)$,
$0\le t\le T$, unless otherwise indicated. To extend the local solution
globally, it suffices to establish the following a priori estimate.

\begin{proposition}[A priori estimates]\label{prop}
Under the assumptions of Theorem~\ref{theorem1}, there exist positive
constants $\varepsilon_0$ and $\delta_0$, depending only on the prescribed
data, such that, if $0\leq\alpha<\varepsilon_0$ and
$0\leq\delta\leq\delta_0$, then
\begin{equation}\label{p-1}
\begin{aligned}
&C_0^{-1}\le v(x,t),\theta(x,t)\le C_0,\\
&\sup_{0\le t\le T}\|(\phi,\psi,\zeta)(t)\|_{H^1}^2
+\int_0^T\left(\|\phi_x\|_2^2+\|(\psi_x,\zeta_x)\|_{H^1}^2
+\|(\psi_t,\zeta_t)\|_2^2\right)dt\\
&\qquad+\int_0^T\!\int(\phi^2+\psi^2+\zeta^2)w^2\,dxdt\le C_0.
\end{aligned}
\end{equation}
The constant $C_0$ is independent of $M,T$ and higher initial norms.
\end{proposition}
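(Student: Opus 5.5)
The plan follows the strategy sketched in the introduction. The order is: basic entropy estimate, then uniform bounds for $v$, then uniform bounds for $\theta$, then $H^1$ closure. Every constant must be shown independent of $M$; this is possible because $\alpha<\varepsilon_0(M)$ and $\delta<\delta_0(M)$ may be taken small in terms of $M$.

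\emph{Step 1: relative-entropy estimate.} Multiply the perturbation system \eqref{perturb} by the usual entropy multipliers $\bigl(-R\Theta(\frac1v-\frac1V),\ \psi,\ \frac{\zeta}{\theta}\bigr)$. This gives
\[
\sup_t\int\Bigl[R\Theta\Phi\bigl(\tfrac vV\bigr)+\tfrac{\psi^2}{2}+c_\nu\Theta\Phi\bigl(\tfrac\theta\Theta\bigr)\Bigr]dx
+\int_0^t\!\int\Bigl(\frac{\mu(\theta)\psi_x^2}{v\theta}+\frac{\kappa(\theta)\zeta_x^2}{v\theta^2}\Bigr)dxds .
\]
The error terms carry $V_x,\Theta_x,U_x$ or $\widetilde R_i$. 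They are bounded by $C\delta\int\!\int(\phi^2+\zeta^2)w^2+C\delta^r$, using Lemma~\ref{lem:remaining-profile-integrals}, \eqref{profile:residual-integral} and Lemma~\ref{decay}.

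The Gaussian-weighted term is closed with Lemma~\ref{hlm}, applied to $h=\phi,\psi,\zeta$. The term $\langle h_t,hg^2\rangle$ is computed from the equations; the dangerous part comes from the hyperbolic coupling $\psi_x$, $(p-p_+)_x$ and is absorbed by combining the $h$'s suitably (this is the content of Lemma~\ref{lemma5}). Since $\alpha$ enters only through $\theta^\alpha$, the estimate holds uniformly in $\alpha\le1$. It bounds \eqref{basic} by $C_0$ plus $C(M)\delta^r$, and the latter is at most $1$ for small $\delta$.

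\emph{Step 2: bounds on $v$.} Set $\mu=\tilde\mu\theta^\alpha$ and rewrite the momentum equation as
\[
\Bigl(\mu\frac{v_x}{v}-\psi\Bigr)_t=p_x+U_t-\alpha\frac{\mu\theta_t}{\theta}\frac{v_x}{v}\cdot\frac{1}{\cdot}
\]
with the precise form obtained via $\mu_t/\mu=\alpha\theta_t/\theta$. Two estimates follow from this.
\begin{itemize}
\item Integrating in time against a cutoff $\varphi$ supported on $[N,N+1]$, following the Jiang/Li--Liang localized representation, gives
$v(x,t)=D(x,t)Y(t)^{-1}\exp(\cdot)$-type formula \eqref{v-repre}. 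Here the extra factor $\exp\bigl(\alpha\int\log\theta\ldots\bigr)$ is controlled because $\alpha\|\log\theta\|_\infty\le\alpha C(M)$ is small.
\item The $\alpha$-remainders $\int_0^t\alpha\frac{\theta_t}{\theta}(\ldots)$ are bounded by $\alpha C(M)$ times $\int\|\theta_t\|_2^2$ from \eqref{space}. The time-decay factor $e^{-c\int_s^t}$ coming from $\int_s^t p\,d\tau\ge c(t-s)$ is kept, using the basic estimate, which bounds $v,\theta$ near $(V,\Theta)$ on most of each unit interval.
\end{itemize}
For $\alpha C(M)\le1$ this yields $C_0^{-1}\le v\le C_0$. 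The same equation, tested against $\frac{v_x}{v}$ (\eqref{phi-x3}), then gives $\sup\|\phi_x\|_2^2+\int\!\int\theta\phi_x^2\le C_0+C\int\!\int\zeta_x^2\ldots$ after the uniform temperature bound.

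\emph{Step 3: bounds on $\theta$ and closure.} For the upper bound, multiply the energy equation by $(\zeta-\Theta)_+$ and the momentum equation by $2\psi(\zeta-\Theta)_+$ (or $(\theta-2\Theta)_+$), and add. This controls $\sup\int(\theta-2\Theta)_+^2$ and $\int\!\int\theta^\beta(\theta-2\Theta)_{+,x}^2$, which with Sobolev interpolation gives $\sup\theta\le C_0$ for every $\beta\ge0$. For the lower bound, test with $(\frac1\theta-\frac2\Theta)_+^p$, i.e.\ a Moser-type iteration in low-temperature regions, giving $\theta\ge C_0^{-1}$. Finally, multiply the momentum equation by $-\psi_{xx}$ and the energy equation by $-\zeta_{xx}$. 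The bounds on $v,\theta$ and the $\phi_x$ estimate close $\sup\|(\psi_x,\zeta_x)\|^2+\int\|(\psi_{xx},\zeta_{xx},\psi_t,\zeta_t)\|^2\le C_0$, without $v_{0xx}$; this is why \eqref{phi-x3} is used in place of an $H^2$ estimate.

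\textbf{Main obstacle.} The main obstacle is Step~2 combined with the temperature estimate. Their constants are mutually coupled through $\int\!\int\zeta_x^2$ and $\|\theta_t\|_2$. Breaking the circularity requires that each appearance of $M$ come multiplied by $\alpha$ or $\delta$, so that $\varepsilon_0(M)$ and $\delta_0(M)$ absorb it.
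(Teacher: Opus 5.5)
Your overall architecture matches the paper: relative entropy, the Gaussian lemma, the localized representation of $v$, and cut-off multipliers for $\theta$. But several of the concrete mechanisms you propose do not yield $M$-independent bounds.

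\textbf{Temperature upper bound (Step 3).} This step fails as written. From $\sup_t\int(\theta-2\Theta)_+^2\,dx$ and $\int_0^T\!\int\theta^\beta|\partial_x(\theta-2\Theta)_+|^2$, the inequality $\|f\|_\infty^2\le2\|f\|_2\|f_x\|_2$ gives only a time-integrated bound. This is exactly \eqref{psx22}, which the paper uses only inside $\int_0^T\cdots dt$; it never gives $\sup_{x,t}\theta\le C_0$. Using $\sup_t\|\zeta_x\|_2^2\le M$ instead gives a bound depending on $M$ with no factor $\alpha$ or $\delta$.

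What is missing is a uniform-in-time gradient bound, and the paper gets it only after the velocity estimates:
\begin{itemize}
\item The $(\zeta-\Theta)_+$ and $2\psi(\zeta-\Theta)_+$ multipliers supply the \emph{unweighted} dissipation $\int_0^T\!\int\psi_x^2$. This works only for $0\le\beta<1$, through $\theta^\beta\psi^2\le\varepsilon\theta+C_\varepsilon|\psi|^{2/(1-\beta)}$; for $\beta\ge1$ one needs the $-(\theta^{-q}-4\Theta^{-q})_+$ argument of Lemma~\ref{beta>1}.
\item Lemma~\ref{high-order} then gives $\sup_t\|\psi_x\|_2$ and $\int_0^T\|\psi_{xx}\|_2^2$.
\item The multiplier $(\theta-2\Theta)_+^{q-1}$ gives \eqref{int-t+-choices}.
\item Only then does $\theta^\beta\zeta_t$ yield \eqref{sup-tx} and hence \eqref{the-up}.
\end{itemize}
So the $\psi$-level $H^1$ closure must come before the temperature bound, not after it.

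\textbf{Temperature lower bound.} Your $(\theta^{-1}-\cdot)_+^p$ iteration only gives $\|(\theta^{-1}-2/c)_+(t)\|_\infty\le C(t-s)$. The bound grows linearly because the source $R^2/(4\mu v)$ in the equation for $\theta^{-1}$ does not decay. Time-uniformity needs the paper's restart argument: \eqref{dt-int} supplies, in every window of length $T_0$, a time at which $\theta\ge c$.

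\textbf{Bounds on $v$ (Step 2).} Step 2 has the circularity you flag but do not resolve. In \eqref{v-repre} the term $\int_s^t\frac{R\theta}{\mu}\frac{B(x,t)Y(t)}{B(x,r)Y(r)}\,dr$ enters with an $O(1)$ coefficient. With only $\theta\le M$ and $\theta\ge M^{-1}$ available, you get $v\le C(M)$ and $v\ge c_LL/M$. Your rule that every $M$ comes with $\alpha$ or $\delta$ does not apply to this term.

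The paper breaks the loop with \eqref{v-theta}. The points $b_k(t)$ of Lemma~\ref{huang-lemma} and the entropy dissipation give $\sup_x\theta(\cdot,t)\le C+C\sup_xv(\cdot,t)\int\theta^{\beta-2}\zeta_x^2/v\,dx$ plus a $\delta$-small term. Since that integral is time-integrable, Gronwall closes $\sup v$. The lower bound uses the window average $\int_{t-L}^t\theta\,ds\ge cL/2$, with $L$ chosen from the dissipation.

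\textbf{The $\alpha$-remainder.} It is not bounded by $\alpha C(M)\int\|\theta_t\|_2^2$. It contains $u\theta_t/(\mu\theta)$ and $R\theta_x/(\mu v)$. It also contains $\theta_xu_x/(\theta v)$, coming from $\mu_xu_x/v$, which your momentum identity omits. It grows like $\alpha C(M)(1+\sqrt{t-s})$, so it must be absorbed against the $-c(t-s)$ decay of $Y$, as in \eqref{b-bound}--\eqref{Y-e}.
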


We establish Proposition~\ref{prop} through the following lemmas.

\begin{lemma}\label{basic-lemma}
For sufficiently small $\alpha,\delta$,
\begin{equation}\label{basic}
\begin{aligned}
&\sup_{0\le s\le t}\int\left\{\frac{\psi^2}{2}
+R\Theta\Phi\left(\frac vV\right)
+c_\nu\Theta\Phi\left(\frac\theta\Theta\right)\right\}(x,s)\,dx\\
&\qquad+\int_0^t\!\int\left(
\frac{\mu\Theta}{v\theta}\psi_x^2
+\frac{\kappa\Theta}{v\theta^2}\zeta_x^2\right)dxds\le C_0.
\end{aligned}
\end{equation}
\end{lemma}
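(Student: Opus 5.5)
The plan is the standard relative-entropy argument for a contact wave, carried out under the a priori bound \eqref{space}. The goal is to control every error term by $\delta$ times quantities that are either integrable in time or absorbable. Here is the basic identity. Multiply $\eqref{perturb}_2$ by $\psi$. Multiply $\eqref{perturb}_1$ by $R\Theta(V^{-1}-v^{-1})$, i.e.\ the derivative of $R\Theta\Phi(v/V)$ in $v$. Multiply the energy perturbation, written as an equation for $\zeta$, by $\zeta/\theta$. Adding the three gives
\[
\Bigl(\tfrac{\psi^2}{2}+R\Theta\Phi(\tfrac vV)+c_\nu\Theta\Phi(\tfrac\theta\Theta)\Bigr)_t
+\frac{\mu\Theta}{v\theta}\psi_x^2+\frac{\kappa\Theta}{v\theta^2}\zeta_x^2+H_x
=Q_1+Q_2+Q_3 .
\]
The three right-hand terms are as follows. $Q_1$ collects the terms with $\Theta_t,U_x,V_x$ multiplying quadratic expressions in $(\phi,\zeta)$, namely $-p_+U_x[\Phi(\tfrac{\theta V}{\Theta v})+\gamma\Phi(\tfrac vV)]$-type expressions. $Q_2$ collects the cross terms $\mu$-differences and $\kappa$-differences times $U_x$, $\Theta_x$. $Q_3$ collects the residual terms $-\widetilde R_1\psi-\widetilde R_2\zeta/\theta$. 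The flux $H$ vanishes at infinity.

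Next I estimate each piece. For $Q_1$, Lemma~\ref{decay} gives $|U_x|+|\Theta_t|\le C\delta(1+t)^{-1}e^{-c_1x^2/(1+t)}$. Using $v,\theta\in[M^{-1},M]$, I get $|Q_1|\le C(M)\delta(\phi^2+\zeta^2)w^2$, which Lemma~\ref{lemma5} bounds by $C(M)\delta$ after time integration. Lemma~\ref{lemma5} applies under \eqref{space} with constant $C(M)$. For $Q_2$, I expand $\mu(\theta)-\mu(\Theta)=\tilde\mu(\theta^\alpha-\Theta^\alpha)$ and the analogous $\kappa$ difference. Terms like $\frac{(\mu(\theta)-\mu(\Theta))U_x\psi_x}{v}$ and $\frac{(\kappa(\theta)-\kappa(\Theta))\Theta_x\zeta_x}{v\theta}$ are handled by Cauchy--Schwarz: one half is absorbed into the dissipation, and the remainder is $C(M)\delta(1+t)^{-1}(\phi^2+\zeta^2)w^2$, again controlled via Lemma~\ref{lemma5}. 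For $Q_3$, Lemma~\ref{lem:contact-and-profile-estimates} gives $\int|\widetilde R_1\psi|+|\widetilde R_2\zeta/\theta|\le C(\|\widetilde R_1\|_1+\|\widetilde R_2\|_1)\|(\psi,\zeta)\|_\infty$. I bound $\|(\psi,\zeta)\|_\infty\le C\|(\psi,\zeta)\|^{1/2}\|(\psi_x,\zeta_x)\|^{1/2}$, use Young's inequality with exponents $4/3$ and $4$, and invoke \eqref{profile:residual-integral} to get $C\delta^{1/3}+\tfrac14$ times the dissipation plus $\sup_s$ of the energy. Here the lower and upper bounds on $\theta$ convert $\zeta^2$ and $\psi^2$ to the energy density.

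After integrating over $\mathbb R\times(0,t)$, the resulting inequality reads
\[
E(t)+\int_0^t D\le E(0)+C(M)(\delta^{1/3}+\delta)+\tfrac14\sup E .
\]
Here $E(0)\le C(N_0,\underline v_0,\underline\theta_0)$, since $\Phi(z)\le C(z-1)^2$ on compact subsets of $(0,\infty)$ and $v_0,\theta_0$ are bounded via the $H^1$ embedding and the initial lower bounds. Choosing $\delta\le\delta_0(M)$ small makes $C(M)\delta^{1/3}\le1$, which yields \eqref{basic} with $C_0$ independent of $M$. The smallness of $\alpha$ is only needed so that $\mu$ stays comparable to $\tilde\mu$ and the $\alpha$-dependent $\mu$-difference terms carry an extra factor; it is not essential at this level.

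The main obstacle is $Q_2$ and $Q_1$. Without smallness of the perturbation, they can only be controlled through the Gaussian-weighted estimate, whose constant depends on $M$. This is why the thresholds must depend on $M$. That is acceptable, because $M$ is fixed later in terms of the data, so $\delta_0=\delta_0(M)$ is ultimately data-dependent, and the final $C_0$ does not involve $M$.
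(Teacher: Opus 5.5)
Your proposal is correct and is essentially the paper's proof: the same three multipliers, Cauchy--Schwarz absorption of the cross terms into a quarter of the dissipation, control of the leftover $(\phi^2+\zeta^2)(|U_x|+\Theta_x^2)\le C\delta(\phi^2+\zeta^2)w^2$ by Lemma~\ref{lemma5} with an $M$-dependent constant, and the choice $C(M)\delta\le1$ so that $C_0$ does not depend on $M$. The only deviation is in the residual terms: the paper bounds them more simply by $\sup_s\|(\psi,\zeta)(s)\|_2\int_0^t\|(\widetilde R_1,\widetilde R_2)\|_2\,ds$ with $\|\widetilde R_1\|_2\le C\delta(1+t)^{-5/4}$, whereas your $L^1$--$L^\infty$ route through \eqref{profile:residual-integral} is the one the paper uses for the composite wave in \eqref{int-F}; both close here.
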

\begin{proof}
Multiplying the mass, momentum and temperature equations by
$-R\Theta(v^{-1}-V^{-1})$, $\psi$ and $\zeta/\theta$, respectively, gives
\begin{equation}\label{ba1}
\left(\frac{\psi^2}{2}+R\Theta\Phi\left(\frac vV\right)
+c_\nu\Theta\Phi\left(\frac\theta\Theta\right)\right)_t
+\frac{\mu\Theta}{v\theta}\psi_x^2
+\frac{\kappa\Theta}{v\theta^2}\zeta_x^2+H_x+Q
=-\widetilde R_1\psi-\widetilde R_2\frac\zeta\theta,
\end{equation}
where, as in the perturbation equations,
\begin{equation}\label{H}
H=(p-p_+)\psi-\left(\frac{\mu u_x}{v}-\frac{\mu_1U_x}{V}\right)\psi
-\frac\zeta\theta\left(\frac{\kappa\theta_x}{v}
-\frac{\kappa_1\Theta_x}{V}\right),
\end{equation}
\begin{align*}
Q={}&p_+\Phi\left(\frac Vv\right)U_x
+\frac{p_+}{\gamma-1}\Phi\left(\frac\Theta\theta\right)U_x
+(p-p_+)\frac\zeta\theta U_x
+\left(\frac\mu v-\frac{\mu_1}V\right)\psi_xU_x\\
&-\frac{2\mu}{v\theta}\psi_x\zeta U_x
-\left(\frac\mu v-\frac{\mu_1}V\right)\frac\zeta\theta U_x^2
-\frac{\kappa\zeta}{v\theta^2}\Theta_x\zeta_x
+\frac\Theta{\theta^2}\left(\frac\kappa v-\frac{\kappa_1}V\right)\Theta_x\zeta_x\\
&-\left(\frac\kappa v-\frac{\kappa_1}V\right)
                        \frac{\zeta\Theta_x^2}{\theta^2}.
\end{align*}
By \eqref{space},
\[
\left|\frac\mu v-\frac{\mu_1}V\right|
+\left|\frac\kappa v-\frac{\kappa_1}V\right|
\le C(M)(|\phi|+|\zeta|).
\]
Hence
\[
|Q|\le\frac{\mu\Theta}{4v\theta}\psi_x^2
+\frac{\kappa\Theta}{4v\theta^2}\zeta_x^2
+C(M)(\phi^2+\zeta^2)(|U_x|+\Theta_x^2).
\]
Using Lemmas~\ref{decay} and \ref{lemma5}, we have
\begin{equation}\label{Q}
\int_0^t\!\int(\phi^2+\zeta^2)(|U_x|+\Theta_x^2)\,dxds
\le C\delta\int_0^t\!\int(\phi^2+\zeta^2)w^2\,dxds
\le C(M)\delta.
\end{equation}
Also,
\begin{align}
\left|\int_0^t\!\int\widetilde R_1\psi\,dxds\right|
&\le\sup_{0\le s\le t}\|\psi(s)\|_2
\int_0^t\|\widetilde R_1(s)\|_2\,ds
\le C(M)\delta\int_0^\infty(1+s)^{-5/4}ds
\le C(M)\delta,\label{R1phi}\\
\left|\int_0^t\!\int\widetilde R_2\frac\zeta\theta\,dxds\right|
&\le M\sup_{0\le s\le t}\|\zeta(s)\|_2
\int_0^t\|\widetilde R_2(s)\|_2\,ds
\le C(M)\delta^2\int_0^\infty(1+s)^{-7/4}ds
\le C(M)\delta^2.\label{R2zeta}
\end{align}
Integration of \eqref{ba1} gives
\begin{equation}\label{ba2}
\begin{aligned}
&\sup_{0\le s\le t}\int\left(\frac{\psi^2}{2}
+R\Theta\Phi\left(\frac vV\right)
+c_\nu\Theta\Phi\left(\frac\theta\Theta\right)\right)(x,s)dx\\
&\qquad+\frac34\int_0^t\!\int
\left(\frac{\mu\Theta}{v\theta}\psi_x^2
+\frac{\kappa\Theta}{v\theta^2}\zeta_x^2\right)dxds
\le C+C(M)\delta.
\end{aligned}
\end{equation}
Taking $C(M)\delta\le1$ proves \eqref{basic}.

Put $\tilde v=v/V$. Since
\[
\left(\frac{\tilde v_x}{\tilde v}\right)_t
 =\left(\frac{u_x}{v}-\frac{U_x}{V}\right)_x,
\]
the momentum equation gives
\begin{equation}\label{st2}
\mu\left(\frac{\tilde v_x}{\tilde v}\right)_t-\psi_t-(p-p_+)_x
 =-\mu_x\left(\frac{u_x}{v}-\frac{U_x}{V}\right)
 +\widetilde R_1-\left(\frac{(\mu-\mu_1)U_x}{V}\right)_x.
\end{equation}
Using
\[
(p-p_+)_x=-p\frac{\tilde v_x}{\tilde v}
             +\frac{R\zeta_x}{v}-\frac{R\zeta\Theta_x}{v\Theta},
\]
we multiply \eqref{st2} by $\tilde v_x/(\mu\tilde v)$ to obtain
\begin{equation}\label{phi-x1}
\begin{aligned}
&\partial_t\left\{\frac12\bigl[\frac{\tilde v_x}{\tilde v}\bigr]^2
 -\frac{\psi\frac{\tilde v_x}{\tilde v}}{\mu}\right\}
 +\partial_x\left\{\frac{\psi}{\mu}
 \left(\frac{u_x}{v}-\frac{U_x}{V}\right)\right\}
 +\frac{p}{\mu}\bigl[\frac{\tilde v_x}{\tilde v}\bigr]^2\\
={}&\left(\frac{\psi}{\mu}\right)_x
 \left(\frac{u_x}{v}-\frac{U_x}{V}\right)
 +\frac{\psi\mu_t}{\mu^2}\frac{\tilde v_x}{\tilde v}
 +\frac{R\zeta_x}{\mu v}\frac{\tilde v_x}{\tilde v}\\
&-\frac{R\zeta\Theta_x}{\mu v\Theta}\frac{\tilde v_x}{\tilde v}
 -\frac{\mu_x}{\mu}
 \left(\frac{u_x}{v}-\frac{U_x}{V}\right)\frac{\tilde v_x}{\tilde v}\\
&+\frac{\frac{\tilde v_x}{\tilde v}}{\mu}
 \left\{\widetilde R_1-
 \left[\frac{(\mu-\mu_1)U_x}{V}\right]_x\right\}.
\end{aligned}
\end{equation}

For $\alpha\log M\le\log2$, we have $\tilde\mu/2\le\mu\le2\tilde\mu$.
In view of \eqref{space} and
\[
\frac{\tilde v_x}{\tilde v}=\frac{\phi_x}{v}-\frac{\phi V_x}{vV},
\]
the identities
\[
\frac{\mu_t}{\mu}=\alpha\frac{\zeta_t+\Theta_t}{\theta},\quad
\frac{\mu_x}{\mu}=\alpha\frac{\zeta_x+\Theta_x}{\theta}
\]
give
\begin{align*}
&\left|\iint\frac{\psi\mu_t}{\mu^2}
                                      \frac{\tilde v_x}{\tilde v}\right|\\
&\quad\le\alpha C(M)\iint|\psi|(|\zeta_t|+|\Theta_t|)
                                      |\frac{\tilde v_x}{\tilde v}|\\
&\quad\le\alpha C(M)\left(
 \sup_{0\le s\le t}\|\psi(s)\|_\infty^2\iint\zeta_t^2
 +\sup_{0\le s\le t}\|\psi(s)\|_2^2\int_0^t\|\Theta_t\|_\infty^2ds
 +\iint\left|\frac{\tilde v_x}{\tilde v}\right|^2\right)
 \le\alpha C(M).
\end{align*}
Similarly, the spatial derivative of $\mu$ satisfies
\begin{align*}
&\left|\iint\frac{\mu_x}{\mu}
 \left(\frac{u_x}{v}-\frac{U_x}{V}\right)\frac{\tilde v_x}{\tilde v}\right|\\
&\quad\le\alpha C(M)\iint(|\zeta_x|+|\Theta_x|)
                 (|\psi_x|+|\phi U_x|)|\frac{\tilde v_x}{\tilde v}|\\
&\quad\le\alpha C(M)\left[
 \sup_{0\le s\le t}\|\zeta_x(s)\|_2
 \left(\int_0^t\|\psi_x\|_\infty^2ds\right)^{1/2}
 \left(\iint\left|\frac{\tilde v_x}{\tilde v}\right|^2\right)^{1/2}
 +C(M)\right]\\
&\quad\le\alpha C(M)\left[
 \sup_{0\le s\le t}\|\zeta_x(s)\|_2
 \left(2\int_0^t\|\psi_x\|_2\|\psi_{xx}\|_2ds\right)^{1/2}
 \left(\iint\left|\frac{\tilde v_x}{\tilde v}\right|^2\right)^{1/2}
 +C(M)\right]
 \le\alpha C(M).
\end{align*}
The other products with $\theta_x$ are estimated by
\begin{align*}
\iint|\psi\zeta_x\psi_x|
&\le\sup_{0\le s\le t}\|\psi(s)\|_\infty
 \left(\iint\zeta_x^2\right)^{1/2}\left(\iint\psi_x^2\right)^{1/2}
 \le C(M),\\
\iint|\psi\Theta_x\psi_x|
&\le\left(\iint\psi^2\Theta_x^2\right)^{1/2}
 \left(\iint\psi_x^2\right)^{1/2}\\
&\le C\delta\left(\iint w^2\psi^2\right)^{1/2}
 \left(\iint\psi_x^2\right)^{1/2}
 \le C(M).
\end{align*}
Moreover,
\begin{align*}
&\left[\frac{(\mu-\mu_1)U_x}{V}\right]_x\\
&\quad=\alpha\mu\frac{\zeta_x}{\theta}\frac{U_x}{V}
 +\alpha\left(\frac{\mu}\theta-
              \frac{\mu_1}\Theta\right)\Theta_x\frac{U_x}{V}
 +(\mu-\mu_1)\left(\frac{U_x}{V}\right)_x.
\end{align*}
The bounds for $U_x$, $(U_x/V)_x$, and $U_x\Theta_x$ in
Lemma~\ref{lem:remaining-profile-integrals}, together with \eqref{space}, give
\begin{align*}
&\left|\iint\frac{\frac{\tilde v_x}{\tilde v}}{\mu}
 \left[\frac{(\mu-\mu_1)U_x}{V}\right]_x\right|\\
&\quad\le\alpha C(M)\iint|\frac{\tilde v_x}{\tilde v}|
 \left\{|U_x\zeta_x|
 +|\zeta|\left[\left|\left(\frac{U_x}{V}\right)_x\right|
                      +|U_x\Theta_x|\right]\right\}\\
&\quad\le\alpha C(M)\left(\iint\left|\frac{\tilde v_x}{\tilde v}\right|^2\right)^{1/2}
 \left[\sup_{0\le s\le t}\|U_x(s)\|_\infty
                       \left(\iint\zeta_x^2\right)^{1/2}\right.\\
&\qquad\left.
 +\sup_{0\le s\le t}\|\zeta(s)\|_\infty
       \left(\iint\left\{\left|\left(\frac{U_x}{V}\right)_x\right|^2
                      +U_x^2\Theta_x^2\right\}\right)^{1/2}\right]
 \le\alpha C(M).
\end{align*}
For the remaining terms in \eqref{phi-x1},
\begin{align*}
&\iint\left(\frac\psi{\mu}\right)_x
                        \left(\frac{u_x}{v}-\frac{U_x}{V}\right)\\
&\quad=\iint\frac{\psi_x^2}{\mu v}
 -\iint\frac{\phi\psi_xU_x}{\mu vV}
 -\alpha\iint\frac{\psi\theta_x}{\mu\theta}
                          \left(\frac{\psi_x}{v}-\frac{\phi U_x}{vV}\right)\\
&\quad\le C\iint\frac{\psi_x^2}{v}
          +C(M)\iint\phi^2U_x^2+\alpha C(M).
\end{align*}
Young's inequality gives
\begin{align*}
&\left|\iint\frac{R\zeta_x}{\mu v}\frac{\tilde v_x}{\tilde v}\right|
 \le\frac18\iint\frac p{\mu}[\frac{\tilde v_x}{\tilde v}]^2
       +C\iint\frac{\zeta_x^2}{v\theta},\\
&\left|\iint\left[-\frac{R\zeta\Theta_x}{\mu v\Theta}
 +\frac{\widetilde R_1}{\mu}\right]\frac{\tilde v_x}{\tilde v}\right|\\
&\quad\le\frac18\iint\frac p{\mu}[\frac{\tilde v_x}{\tilde v}]^2
 +C(M)\iint\left[(\phi^2+\zeta^2)(\Theta_x^2)
                                       +\widetilde R_1^2\right].
\end{align*}
Since
\[
\int\left\{\frac12[\frac{\tilde v_x}{\tilde v}]^2
                 -\frac{\psi\frac{\tilde v_x}{\tilde v}}{\mu}\right\}
 \ge\frac14\|\frac{\tilde v_x}{\tilde v}\|_2^2-C\|\psi\|_2^2,
\]
integration of \eqref{phi-x1} gives
\begin{equation}\label{phi-x3}
\begin{aligned}
&\sup_{0\leq t\leq T}\|\frac{\tilde v_x}{\tilde v}(t)\|_2^2
 +\int_0^T\!\int\frac{\theta}{v}[\frac{\tilde v_x}{\tilde v}]^2\\
&\leq C\left\{\|(\log(v_0/V_0))_x\|_2^2
 +\sup_{0\leq t\leq T}\|\psi(t)\|_2^2
 +\int_0^T\!\int\left(\frac{\psi_x^2}{v}+
 \frac{\zeta_x^2}{v\theta}\right)\right\}\\
&\quad+C(M)\int_0^T\!\int
 \left[(\phi^2+\zeta^2)(U_x^2+\Theta_x^2)
 +\widetilde R_1^2\right]+\alpha C(M).
\end{aligned}
\end{equation}

\end{proof}
\begin{lemma}\label{huang-lemma}
Let $\tilde\theta=\theta/\Theta$ and let $\alpha_1,\alpha_2$ be the two
positive roots of $y-\log y-1=C$, with $C$ fixed by \eqref{basic}.
For $k\in\mathbb Z$,
\begin{equation}\label{root}
\alpha_1\le\int_k^{k+1}\tilde v(x,t)\,dx\le\alpha_2,
\qquad
\alpha_1\le\int_k^{k+1}\tilde\theta(x,t)\,dx\le\alpha_2.
\end{equation}
There are $a_k(t),b_k(t)\in[k,k+1]$ such that
\begin{equation}\label{root2}
\alpha_1\le\tilde v(a_k(t),t),\tilde\theta(b_k(t),t)\le\alpha_2.
\end{equation}
\end{lemma}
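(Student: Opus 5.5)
The plan is to use the basic energy bound \eqref{basic} together with Jensen's inequality for the convex function $\Phi(z)=z-1-\log z$ on each unit interval $[k,k+1]$.

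\textbf{Step 1: a uniform bound on unit intervals.} By \eqref{basic}, and since $\Theta$ is bounded below by $\min\{\theta_-,\theta_+\}>0$,
\[
\int_k^{k+1}\Phi(\tilde v)\,dx\le C
\qquad\text{and}\qquad
\int_k^{k+1}\Phi(\tilde\theta)\,dx\le C
\]
for every $t$ and every $k$. The constant $C$ here is the one produced by \eqref{basic} after dividing by $R\inf\Theta$ and $c_\nu\inf\Theta$. In the statement this constant is simply called $C$, the number fixing the roots $\alpha_1,\alpha_2$.

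\textbf{Step 2: Jensen's inequality.} The interval $[k,k+1]$ has unit length and $\Phi$ is convex, so Jensen gives
\[
\Phi\Bigl(\int_k^{k+1}\tilde v\,dx\Bigr)\le\int_k^{k+1}\Phi(\tilde v)\,dx\le C.
\]
The function $\Phi$ decreases on $(0,1]$, increases on $[1,\infty)$, and tends to $\infty$ at both ends. Hence the sublevel set $\{y>0:\Phi(y)\le C\}$ is exactly $[\alpha_1,\alpha_2]$, where $\alpha_1<1<\alpha_2$ are the two positive roots of $y-\log y-1=C$. This yields the first half of \eqref{root}, and the same argument applied to $\tilde\theta$ yields the second half.

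\textbf{Step 3: the points $a_k(t)$ and $b_k(t)$.} For fixed $t$, the function $\tilde v(\cdot,t)$ is continuous in $x$. This holds because $\phi\in H^1\subset C$ and $V$ is smooth and positive. Its average over $[k,k+1]$ lies in $[\alpha_1,\alpha_2]$, so the integral mean value theorem gives a point $a_k(t)\in[k,k+1]$ with $\tilde v(a_k(t),t)$ equal to that average. Therefore $\tilde v(a_k(t),t)\in[\alpha_1,\alpha_2]$, and the same reasoning produces $b_k(t)$ for $\tilde\theta$. This gives \eqref{root2}.

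There is no real obstacle in this argument. The only point needing care is that $C$ in \eqref{basic} is independent of $M$ (it is $C_0$), so that $\alpha_1$ and $\alpha_2$ depend only on the data. This independence comes from taking $C(M)\delta\le1$ in the proof of Lemma~\ref{basic-lemma}.
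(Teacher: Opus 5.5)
Your proposal is correct and follows the paper's own proof: Jensen's inequality for the convex function $\Phi$ on each unit interval, bounded through the entropy estimate \eqref{basic}, followed by the integral mean value theorem. Your added details (the lower bound $\Theta\ge\min\{\theta_-,\theta_+\}$ used to divide out the weights, and the continuity of $\tilde v,\tilde\theta$ coming from $H^1\subset C$) fill in steps the paper leaves implicit.
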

\begin{proof}
By Jensen's inequality,
\begin{align*}
\Phi\left(\int_k^{k+1}\tilde v\,dx\right)
+\Phi\left(\int_k^{k+1}\tilde\theta\,dx\right)
&\le\int_k^{k+1}\bigl[\Phi(\tilde v)+\Phi(\tilde\theta)\bigr]dx
\le C.
\end{align*}
The mean value theorem gives \eqref{root2}.
\end{proof}

\begin{lemma}\label{jiang-lemma}
For $x\in[k,k+1]$ and $0\le s\le t\le T$,
\begin{equation}\label{v-repre}
v(x,t)=v(x,s)\frac{B(x,t)Y(t)}{B(x,s)Y(s)}
+\int_s^t\frac{R\theta(x,r)}{\mu(\theta(x,r))}
              \frac{B(x,t)Y(t)}{B(x,r)Y(r)}\,dr,
\end{equation}
where
\begin{equation}\label{B}
\begin{aligned}
B(x,t)=v_0(x)\exp\Bigg\{
&\int_x^\infty\left(\frac{u_0(y)}{\mu(\theta_0(y))}
                         -\frac{u(y,t)}{\mu(\theta(y,t))}\right)D(y)\,dy\\
&-\alpha\int_0^t\!\int_x^\infty
\left(\frac{u\theta_t}{\mu\theta}+\frac{R\theta_y}{\mu v}
                        -\frac{\theta_yu_y}{\theta v}\right)D(y)\,dydr
\Bigg\},
\end{aligned}
\end{equation}
\begin{equation}\label{Y}
Y(t)=\exp\left(\int_0^t\!\int_{k+1}^{k+2}\frac\sigma\mu(y,r)\,dydr\right),
\qquad \sigma=\frac{\mu u_x}{v}-\frac{R\theta}{v},
\end{equation}
\begin{equation}\label{beta}
D(x)=\begin{cases}
1,&x\le k+1,\\
k+2-x,&k+1\le x\le k+2,\\
0,&x\ge k+2.
\end{cases}
\end{equation}
\end{lemma}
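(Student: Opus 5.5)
The plan is to derive \eqref{v-repre} from the momentum equation written in conservative form, $u_t=\sigma_x$ with $\sigma=\mu u_x/v-R\theta/v$. The argument follows the Kazhikhov--Jiang localized representation, with the extra terms produced by $\mu=\mu(\theta)$ kept explicitly. The key observation is that $\mu u_x/v=\mu(\log v)_t$, because $v_t=u_x$. Hence
\[
\frac{\sigma}{\mu}=(\log v)_t-\frac{R\theta}{\mu v},
\]
and the task is to express $\sigma/\mu$ at a point through integrals of $u$ and of $\theta$-derivatives.

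\emph{Step 1: the weighted momentum identity.} Divide the momentum equation by $\mu$ and use $\mu_t/\mu=\alpha\theta_t/\theta$ and $\mu_x/\mu=\alpha\theta_x/\theta$:
\[
\left(\frac u\mu\right)_t+\alpha\frac{u\theta_t}{\mu\theta}
=\left(\frac\sigma\mu\right)_y
+\alpha\frac{\theta_y}{\theta}\left(\frac{u_y}{v}-\frac{R\theta}{\mu v}\right).
\]
Next, multiply by the cut-off $D(y)$ from \eqref{beta} and integrate over $y\in(x,\infty)$ for $x\in[k,k+1]$. Since the support of $D$ on $(x,\infty)$ is contained in $[x,k+2]$, every integral runs over a bounded interval. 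Because $D(x)=1$ and $D'=-\mathbf 1_{[k+1,k+2]}$, integration by parts gives
\[
\int_x^\infty\left(\frac\sigma\mu\right)_yD\,dy
=-\frac\sigma\mu(x,t)+\int_{k+1}^{k+2}\frac\sigma\mu\,dy .
\]
Inserting $\sigma/\mu=(\log v)_t-R\theta/(\mu v)$ then yields
\[
(\log v)_t(x,t)=\frac{R\theta}{\mu v}(x,t)+\int_{k+1}^{k+2}\frac\sigma\mu\,dy
-\partial_t\!\int_x^\infty\frac u\mu D\,dy
-\alpha\!\int_x^\infty\left(\frac{u\theta_t}{\mu\theta}+\frac{R\theta_y}{\mu v}-\frac{\theta_yu_y}{\theta v}\right)D\,dy .
\]

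\emph{Step 2: integrate in time.} Integrating from $0$ to $t$ and exponentiating, with $v_0$ absorbed at $t=0$, gives
\[
v(x,t)=B(x,t)\,Y(t)\exp\left(\int_0^t\frac{R\theta}{\mu v}(x,r)\,dr\right),
\]
where $B$ and $Y$ are exactly \eqref{B} and \eqref{Y}. Consequently
\[
\partial_t\left(\frac{v}{BY}\right)=\frac{v}{BY}\cdot\frac{R\theta}{\mu v}=\frac{R\theta}{\mu BY}.
\]
Integrating this from $s$ to $t$ and multiplying by $B(x,t)Y(t)$ gives \eqref{v-repre}.

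\emph{Main obstacle.} The difficulty is not conceptual. It lies in tracking the signs of the three $\alpha$-terms so that they match \eqref{B}, and in justifying the computation at the strong-solution level \eqref{local0:class}. For the justification, I would first carry out the computation for smooth approximate solutions and then pass to the limit. Alternatively, one can argue directly, since $u\in C([0,T];H^1)$, $\theta_t\in L^2$, and $\theta_x,u_x\in L^2(0,T;H^1)$; these make all integrands in $L^1$ over the bounded support of $D$ and make $t\mapsto\int_x^\infty (u/\mu)D\,dy$ absolutely continuous. With that, the identity holds for every $t$ and almost every $x$, and then for all $x$ by continuity.
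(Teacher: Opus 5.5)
Your proposal is correct and follows the paper's proof. You use the same identity $(\sigma/\mu)_x=(u/\mu)_t+\alpha\bigl(\frac{u\theta_t}{\mu\theta}+\frac{R\theta_x}{\mu v}-\frac{\theta_xu_x}{\theta v}\bigr)$, multiply by $D$, integrate over $(x,\infty)$ and then in time, with all signs matching \eqref{B}; the only cosmetic difference is that you integrate $(\log v)_t$ directly to get $v=BY\exp\bigl(\int_0^t\frac{R\theta}{\mu v}\,dr\bigr)$, whereas the paper first records $\frac{B(x,t)Y(t)}{B(x,s)Y(s)}=\exp\bigl(\int_s^t\frac{\sigma}{\mu}\,dr\bigr)$ and then differentiates $v/(BY)$.
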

\begin{proof}
The momentum equation gives
\begin{equation}\label{volume0:divided-momentum}
\left(\frac\sigma\mu\right)_x
=\left(\frac u\mu\right)_t
+\alpha\left(\frac{u\theta_t}{\mu\theta}
+\frac{R\theta_x}{\mu v}-\frac{\theta_xu_x}{\theta v}\right).
\end{equation}
Multiplying by $D$ and integrating from $x$ to $+\infty$, we obtain
\begin{align}
\frac\sigma\mu(x,t)
&=\int_{k+1}^{k+2}\frac\sigma\mu(y,t)\,dy
-\frac d{dt}\int_x^\infty\frac{u(y,t)}{\mu(\theta(y,t))}D(y)\,dy\nonumber\\
&\quad-\alpha\int_x^\infty
 \left(\frac{u\theta_t}{\mu\theta}
+\frac{R\theta_y}{\mu v}-\frac{\theta_yu_y}{\theta v}\right)D(y)\,dy.
\label{volume0:kernel-formula}
\end{align}
Integrating in time and using \eqref{B} and \eqref{Y} gives
\begin{equation}\label{volume0:kernel-ode}
\frac{B(x,t)Y(t)}{B(x,s)Y(s)}
=\exp\left(\int_s^t\frac\sigma\mu(x,r)\,dr\right).
\end{equation}
Since $v_t=u_x$,
\begin{align*}
\frac d{dt}\left(\frac{v(x,t)}{B(x,t)Y(t)}\right)
&=\frac{v_t-v\sigma/\mu}{B(x,t)Y(t)}
=\frac{R\theta(x,t)}{\mu(\theta(x,t))B(x,t)Y(t)}.
\end{align*}
Integration in time proves \eqref{v-repre}.
\end{proof}

\begin{lemma}\label{v-bound}
There are positive constants $\underline v(C_0),\bar v(C_0)$ such that
\begin{equation}\label{v}
\underline v(C_0)\le v(x,t)\le\bar v(C_0),\qquad
x\in\mathbb R,\quad 0\le t\le T.
\end{equation}
\end{lemma}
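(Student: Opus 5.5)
We plan to start from the representation \eqref{v-repre} of Lemma~\ref{jiang-lemma} and derive two-sided bounds on $B(x,t)$ and decay of $Y(t)/Y(s)$. Throughout, $x\in[k,k+1]$ and $k\in\mathbb Z$ is arbitrary, and all constants must be independent of $k$, $M$ and $T$.

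\textbf{Step 1: bounds on $B$.} We first show that $B$ is bounded above and below uniformly. The term $\int_x^\infty (u_0/\mu(\theta_0)-u/\mu(\theta))D\,dy$ is supported on an interval of length at most two. Using $\mu\ge\tilde\mu/2$ and Cauchy--Schwarz with \eqref{basic} (which bounds $\|\psi\|_2$ by $C_0$, while $U$ is bounded), this term is bounded by $C_0$. The $\alpha$-term is
\[
\alpha\int_0^t\!\int_x^{k+2}\Bigl(\frac{u\theta_t}{\mu\theta}+\frac{R\theta_y}{\mu v}-\frac{\theta_yu_y}{\theta v}\Bigr)D\,dydr .
\]
Under \eqref{space} it is at most $\alpha C(M)(1+t)$ if we estimate crudely. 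To avoid the factor $t$, we split $\theta_t=\zeta_t+\Theta_t$ and $\theta_y=\zeta_y+\Theta_y$. The contributions involving $\zeta_t,\zeta_y,\psi_y$ are then bounded by $C(M)\bigl(\int_0^t\int(\zeta_t^2+\zeta_x^2+\psi_x^2)\bigr)\le C(M)$ after using $\|u\|_\infty\le C(M)$. The cross terms $u\Theta_t$, $\Theta_y$ and $\Theta_yu_y$ are local integrals over $[k,k+2]$, so they are controlled by $\int_0^t\|\Theta_t\|_\infty dr$ and similar quantities, each bounded by $C\delta\int_0^t (1+r)^{-1/2}dr$.

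The $\Theta_y$ terms still grow like $\sqrt t$. Our plan is therefore to keep them as $\alpha C(M)\delta(1+t-s)^{1/2}$ in the ratio $B(x,t)/B(x,r)$; this is the ``time-decay factor retained'' mentioned in the introduction. Since the growth is sublinear, it will later be dominated by the linear decay coming from $Y$. In this way we aim for
\[
C^{-1}e^{-\alpha C(M)(t-s)^{1/2}}\le \frac{B(x,t)}{B(x,s)}\le Ce^{\alpha C(M)(t-s)^{1/2}},
\]
with $C$ depending only on $C_0$.

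\textbf{Step 2: decay of $Y$.} Next we write
\[
\log\frac{Y(t)}{Y(s)}=\int_s^t\!\int_{k+1}^{k+2}\Bigl(\frac{u_x}{v}-\frac{R\theta}{\mu v}\Bigr)dydr .
\]
The first part equals $\int_s^t[\log v]_{k+1}^{k+2}\,dr$. We bound it by $\int_s^t\int_{k+1}^{k+2}|\tilde v_x/\tilde v|+|V_x/V|$. By Cauchy--Schwarz and the dissipation in \eqref{phi-x3} (whose right side is $\le C_0+C(M)\delta+\alpha C(M)$ by \eqref{basic}, Lemma~\ref{lemma5} and Lemma~\ref{lem:remaining-profile-integrals}), this is at most $\varepsilon(t-s)+C_\varepsilon(\cdots)$. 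For the second part we use Lemma~\ref{huang-lemma}: on each cell there are points with $\tilde v,\tilde\theta\in[\alpha_1,\alpha_2]$. Combining this with the $L^2_tL^\infty_x$ smallness coming from the $\tilde v_x$ and $\zeta_x$ dissipation, we get $\int_{k+1}^{k+2}\theta/v\,dy\ge c$ except on a time set of finite measure. Hence
\[
\frac{Y(t)}{Y(s)}\le Ce^{-c(t-s)} ,
\]
with $c,C$ depending only on $C_0$, and the same argument gives the matching lower bound $Y(t)/Y(s)\ge C^{-1}e^{-C(t-s)}$.

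\textbf{Step 3: conclusion and main obstacle.} Inserting Steps 1 and 2 into \eqref{v-repre} with $s=0$ gives
\[
v\le C+C\int_0^t\theta(x,r)e^{-c(t-r)/2}dr .
\]
Here we use $\alpha C(M)(t-r)^{1/2}\le \frac c2 (t-r)+\alpha^2C(M)^2/(2c)$, and choosing $\alpha$ small relative to $M$ handles the constant. Since $\theta$ is only known to be bounded by $M$, we instead use $\int_0^t\theta e^{-c(t-r)}dr\le C+C\int_0^t\|\zeta\|_\infty^2 dr$ type splitting together with $\|\theta-\Theta\|_\infty\le C\|\zeta\|_2^{1/2}\|\zeta_x\|_2^{1/2}$. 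This yields a bound on $\max v$ depending on $C_0$, closed by a Gronwall-type argument.

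For the lower bound we drop the first term in \eqref{v-repre} when $t-s\ge1$ and use $\theta\ge$ its cell value near $b_k$ minus an $L^2$-controlled oscillation. When $t\le1$ we use the first term directly, since $v_0\ge\underline v_0$.

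The main obstacle is keeping every constant independent of $M$: the $\alpha C(M)$ and $\delta C(M)$ remainders must be absorbed by first fixing $M$ as a function of $C_0$ and then taking $\alpha,\delta$ small. The delicate piece is the $\alpha\int\Theta_y$ growth in $B$, which we plan to beat by the exponential decay of $Y$.
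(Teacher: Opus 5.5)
Your Step~1 conclusion $|\log(B(x,t)/B(x,s))|\le C+\alpha C(M)(1+\sqrt{t-s})$ matches the paper. One detail is off: the linear terms $u\zeta_t$ and $\zeta_y$ contribute $C(M)\sqrt{t-s}$ by Cauchy--Schwarz in time, not $C(M)$.

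\textbf{Step~2, upper decay of $Y$.} There is a genuine gap here. First, the identity is wrong. Since $v_t=u_x$, you have $u_x/v=(\log v)_t$, not $(\log v)_x$, so $\int_s^t\!\int_{k+1}^{k+2}u_x/v\,dy\,dr=\int_{k+1}^{k+2}\log\frac{v(y,t)}{v(y,s)}\,dy$. Second, you cannot use the dissipation of \eqref{phi-x3} at this stage. Its right-hand side contains $\iint\psi_x^2/v$ and $\iint\zeta_x^2/(v\theta)$, whereas \eqref{basic} only controls $\iint\psi_x^2/(v\theta)$ and $\iint\theta^{\beta-2}\zeta_x^2/v$. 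Passing from one to the other needs pointwise bounds on $\theta$, which come from Lemmas~\ref{lemma-ll-1} and \ref{beta>1}, and those lemmas use \eqref{v}. So any use of $\tilde v_x$, including your exceptional time set, costs $C(M)$ and destroys the $M$-independence. The paper never uses $\tilde v_x$ in this lemma. It writes $\sigma/\mu=\psi_x/v+U_x/v-R\theta/(\mu v)$ and absorbs $\psi_x/v$ by Young into half of $R\theta/(\mu v)$ plus $C\psi_x^2/(v\theta)$, which \eqref{basic} controls. It bounds $\int_s^t|U_x|\,d\tau\le C\delta$ using the monotone self-similar profile. It then gets $\int_{k+1}^{k+2}\theta/v\,dx\ge c$ at every time from Jensen's inequality and \eqref{root}, with no exceptional set. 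With the corrected identity you could alternatively use $\int_{k+1}^{k+2}|\log\tilde v|\,dy\le C$, which follows from the entropy bound.

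\textbf{Lower decay of $Y$ and Step~3.} The lower bound $Y(t)/Y(s)\ge C^{-1}e^{-C(t-s)}$ does not follow ``by the same argument''. It would need an upper bound for $\int_{k+1}^{k+2}\theta/v\,dx$, that is, a lower bound for $v$, which is what you are proving. The paper gets the kernel lower bound differently. For $0\le t-s\le L$ it integrates \eqref{v-repre} over $[k,k+1]$, uses \eqref{root} (cell averages of $v$ bounded below and of $\theta$ above), and applies Gronwall to $t\mapsto Y(s)/Y(t)$. This yields $B(x,t)Y(t)/(B(x,s)Y(s))\ge c_L$.

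Step~3 also does not close as written. Neither $\sup_t\|\zeta\|_2$ nor $\int_0^T\|\zeta_x\|_2^2$ is bounded by $C_0$ at this stage, so $\|\zeta\|_2^{1/2}\|\zeta_x\|_2^{1/2}$ only gives $C(M)$. The missing ingredient is a pointwise estimate from the oscillation of a power of $\tilde\theta$ about the point $b_k(t)$ of Lemma~\ref{huang-lemma}:
\[
\theta(x,t)\le C+C\sup_yv(y,t)\int\frac{\theta^{\beta-2}\zeta_x^2}{v}(y,t)\,dy+C(M)\int\zeta^2\Theta_x^2\,dy .
\]
Its weight is exactly the dissipation in \eqref{basic}. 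Inserting it into \eqref{v-repre} gives $\sup_xv(\cdot,t)\le C+C\int_0^t\bigl(\int\theta^{\beta-2}\zeta_x^2/v\,dy\bigr)\sup_xv(\cdot,s)\,ds$, which Gronwall closes. The reversed-sign version of the same estimate gives the lower bound. That only works over a window of data-dependent length $L$ with $cL/2$ exceeding the total dissipation, not over windows of length $1$.
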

\begin{proof}
Choose $\alpha\log M\le\log2$. Then
$\tilde\mu/2\le\mu\le2\tilde\mu$. By \eqref{basic},
\begin{align*}
&\left|\int_x^\infty\left(\frac{u(y,t)}{\mu(\theta(y,t))}
                      -\frac{u(y,s)}{\mu(\theta(y,s))}\right)D(y)\,dy\right|\\
&\qquad\le C\bigl(\|\psi(t)\|_2+\|\psi(s)\|_2+\|U\|_\infty\bigr)\le C.
\end{align*}
The time integral in \eqref{B} satisfies
\begin{align*}
&\int_s^t\!\int_k^{k+2}|u\theta_t|\,dydr
\le C(M)\sqrt{t-s}\left[\int_s^t
   \bigl(\|\zeta_t\|_2^2+\|\Theta_t\|_\infty^2\bigr)dr\right]^{1/2}
\le C(M)\sqrt{t-s},\\
&\int_s^t\!\int_k^{k+2}|\theta_x|\,dydr
\le C\sqrt{t-s}\left(\int_s^t\|\zeta_x\|_2^2dr\right)^{1/2}
+C\delta\int_s^t(1+r)^{-1/2}dr
\le C(M)\sqrt{t-s}.
\end{align*}
For the product of the spatial derivatives, we have
\begin{align*}
&\int_s^t\!\int_k^{k+2}|\theta_xu_x|\,dydr\\
&\qquad\le C\left(\int_s^t\|\zeta_x\|_2^2dr\right)^{1/2}
       \left[\int_s^t\bigl(\|\psi_x\|_2^2+C\|U_x\|_\infty^2\bigr)dr\right]^{1/2}\\
&\qquad\quad+C\delta\left(\int_s^t\|\psi_x\|_2^2dr\right)^{1/2}
                    \left(\int_s^t(1+r)^{-1}dr\right)^{1/2}
             +C\int_s^t\|\Theta_xU_x\|_\infty dr\\
&\qquad\le C(M)\bigl(1+\sqrt{\log(1+t-s)}\bigr).
\end{align*}
Consequently,
\begin{equation}\label{b-bound}
\left|\log\frac{B(x,t)}{B(x,s)}\right|
\le C+\alpha C(M)(1+\sqrt{t-s}).
\end{equation}
By Lemma~\ref{huang-lemma} and Jensen's inequality,
\begin{equation}\label{int-t/v}
\begin{aligned}
-\int_s^t\!\int_{k+1}^{k+2}\frac\theta v\,dxd\tau
&\le-c\int_s^t\exp\left(\int_{k+1}^{k+2}
                 \log\frac{\tilde\theta}{\tilde v}\,dx\right)d\tau\\
&\le-c\int_s^t\frac{
 \exp\left(\int_{k+1}^{k+2}
       [\tilde\theta-1-\Phi(\tilde\theta)]\,dx\right)}
 {\displaystyle\int_{k+1}^{k+2}\tilde v\,dx}\,d\tau
\le-c(t-s).
\end{aligned}
\end{equation}
Since $\Theta(x,t)=\Theta(x/\sqrt{1+t},0)$ is monotone and
\[
U_x(x,t)=\frac R{p_+}\Theta_t(x,t)
        =-\frac{Rx}{2p_+(1+t)}\Theta_x(x,t),
\]
we have
\begin{align*}
\int_s^t|U_x(x,\tau)|\,d\tau
&=\frac R{p_+}|\Theta(x,t)-\Theta(x,s)|\le C\delta.
\end{align*}
Integration of \eqref{Y} and \eqref{int-t/v} gives
\begin{equation}\label{int-sigma}
\begin{aligned}
\int_s^t\!\int_{k+1}^{k+2}\frac\sigma\mu\,dxd\tau
&=\int_s^t\!\int_{k+1}^{k+2}
 \left(\frac{\psi_x}{v}+\frac{U_x}{v}
                         -\frac{R\theta}{\mu v}\right)dxd\tau\\
&\le C\int_s^t\!\int\frac{\psi_x^2}{v\theta}\,dxd\tau
 -c\int_s^t\!\int_{k+1}^{k+2}\frac\theta v\,dxd\tau
 +M\int_{k+1}^{k+2}\!\int_s^t|U_x|\,d\tau dx\\
&\le C+C(M)\delta-c(t-s).
\end{aligned}
\end{equation}
For $C(M)\delta\le1$ and $\alpha C(M)\le1$,
\begin{align}
\log\frac{B(x,t)Y(t)}{B(x,s)Y(s)}
&\le C+\alpha C(M)(1+\sqrt{t-s})
+C(M)\delta-c(t-s)
\nonumber\\
&\le C+\frac{[\alpha C(M)]^2}{c}-\frac c2(t-s).\nonumber
\end{align}
Combining \eqref{b-bound} with \eqref{int-sigma} yields
\begin{equation}\label{Y-e}
\frac{B(x,t)Y(t)}{B(x,s)Y(s)}
+\frac{Y(t)}{Y(s)}\le Ce^{-c(t-s)}.
\end{equation}
By \eqref{basic}, \eqref{root2}, and the monotonicity of the powers,
\begin{align*}
&\left|\tilde\theta^{1/2}(x,t)-\tilde\theta^{1/2}(b_k(t),t)\right|\\
&\quad\le C\left|\tilde\theta^{(\beta+1)/2}(x,t)
                    -\tilde\theta^{(\beta+1)/2}(b_k(t),t)\right|\\
&\quad\le C\int_k^{k+1}\tilde\theta^{(\beta-1)/2}
       \left|\frac{\zeta_x}{\Theta}-\frac{\zeta\Theta_x}{\Theta^2}\right|dx\\
&\quad\le C\left(\int_k^{k+1}\frac{\theta^{\beta-2}\zeta_x^2}{v}\,dx\right)^{1/2}
             \left(\int_k^{k+1}\theta v\,dx\right)^{1/2}
       +C(M)\left(\int\zeta^2\Theta_x^2\,dx\right)^{1/2}\\
&\quad\le C\left[\int\frac{\theta^{\beta-2}\zeta_x^2}{v}(y,t)\,dy
                         \sup_yv(y,t)\right]^{1/2}
       +C(M)\left(\int\zeta^2\Theta_x^2\,dx\right)^{1/2}.
\end{align*}
Thus
\begin{equation}\label{v-theta}
\begin{aligned}
\theta(x,t)&\le C+C\int\frac{\theta^{\beta-2}\zeta_x^2}{v}(y,t)\,dy\sup_yv(y,t)
                         +C(M)\int\zeta^2\Theta_x^2\,dx,\\
\theta(x,t)&\ge c-C\int\frac{\theta^{\beta-2}\zeta_x^2}{v}(y,t)\,dy\sup_yv(y,t)
                         -C(M)\int\zeta^2\Theta_x^2\,dx.
\end{aligned}
\end{equation}
Since $C(M)\iint\zeta^2\Theta_x^2\le\delta^2C(M)\le1$, we have
\begin{align}
\sup_xv(x,t)
&\le C+C\int_0^t e^{-c(t-s)}\sup_x\theta(x,s)\,ds\nonumber\\
&\le C+C\int_0^t\left(\int\frac{\theta^{\beta-2}\zeta_x^2}{v}(x,s)\,dx\right)
                                          \sup_xv(x,s)\,ds\nonumber\\
&\le C\exp\left(C\int_0^t\!\int\frac{\theta^{\beta-2}\zeta_x^2}{v}\,dxds\right)
\le C.\label{v-upper}
\end{align}
For $0\le t-s\le L$, \eqref{b-bound} gives
\[
c_L\le\frac{B(x,t)}{B(x,s)}\le C_L.
\]
Dividing \eqref{v-repre} by $Y(t)/Y(s)$ and integrating on $[k,k+1]$,
\begin{align*}
\frac{Y(s)}{Y(t)}\int_k^{k+1}v(x,t)\,dx
&\le C_L\int_k^{k+1}v(x,s)\,dx
+C_L\int_s^t\frac{Y(s)}{Y(r)}\int_k^{k+1}\theta(x,r)\,dxdr.
\end{align*}
By \eqref{root} and Gronwall's inequality,
\begin{align*}
\frac{Y(s)}{Y(t)}
&\le C_L+C_L\int_s^t\frac{Y(s)}{Y(r)}\,dr
\le C_Le^{C_L(t-s)}\le C_Le^{C_LL}.
\end{align*}
Consequently,
\begin{equation}\label{volume0:kernel-lower}
\frac{B(x,t)Y(t)}{B(x,s)Y(s)}\ge c_L>0,
\qquad 0\le t-s\le L.
\end{equation}
By \eqref{basic} and \eqref{v-upper}, choose $L\ge1$, depending only
on the data, such that
\[
C\int_0^T\!\int\frac{\theta^{\beta-2}\zeta_x^2}{v}\,dxdt+1\le\frac{cL}{2}.
\]
For $t\ge L$, \eqref{v-repre}, \eqref{v-theta} and
$C(M)\iint\zeta^2\Theta_x^2\le1$ yield
\begin{align}
v(x,t)
&\ge c_L\int_{t-L}^t\theta(x,s)\,ds\nonumber\\
&\ge c_L\left[cL-C\int_{t-L}^t\!\int\frac{\theta^{\beta-2}\zeta_x^2}{v}\,dxds
-C(M)\int_{t-L}^t\!\int\zeta^2\Theta_x^2\,dxds\right]
\ge\frac{c_LcL}{2}.\label{wt-1}
\end{align}
For $0\le t\le L$, we obtain directly from \eqref{v-repre} that
\begin{equation}\label{v-ka}
v(x,t)\ge v_0(x)\frac{B(x,t)Y(t)}{B(x,0)Y(0)}
\ge c_L\underline v_0.
\end{equation}
The upper bound \eqref{v-upper} and the lower bounds \eqref{wt-1} and
\eqref{v-ka} establish \eqref{v}.
\end{proof}

\begin{lemma}\label{lemma-ll-1}
For $0\leq\beta<1$, one has
\begin{equation}\label{new-nergy}
\begin{aligned}
&\sup_{0\leq s\leq T}\int(\zeta^2+\psi^4)(x,s)\,dx
 +\int_0^T\!\int\bigl[(1+\theta+\psi^2)\psi_x^2
              +(\theta^{-1}+\theta^\beta)\zeta_x^2\bigr]dxds\\
&\qquad+\int_0^T\left\|\left(\zeta-\frac12\Theta\right)_+\right\|_\infty^2ds
 \leq C+\delta^r C(M).
\end{aligned}
\end{equation}
\end{lemma}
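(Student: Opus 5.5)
We will follow the Li--Liang type cut-off argument adapted to the contact wave, as announced in the introduction. The $v$-bounds of Lemma~\ref{v-bound} are now at our disposal, so every factor $v^{\pm1}$ is harmless. The dissipation in \eqref{basic} controls $\iint \theta^{\beta-2}\zeta_x^2$ and $\iint\theta^{\alpha-1}\psi_x^2$. The task is therefore to upgrade these to the weights $(\theta^{-1}+\theta^\beta)$ and $(1+\theta+\psi^2)$.

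\emph{Step 1: the $L^4$ velocity estimate.} We multiply the momentum equation in \eqref{perturb} by $\psi^3$ and integrate. The viscous term yields $3\iint \mu\psi^2\psi_x^2/v$, which is bounded below using $\mu\ge\tilde\mu/2$. The pressure term $3\iint(p-p_+)\psi^2\psi_x$ is split by Young's inequality into $\frac14\iint\psi^2\psi_x^2$ plus $C\iint\psi^2(\phi^2+\zeta^2)$. This last piece needs the supremum of the temperature. We will use
\[
\iint\psi^2\zeta^2\le \iint\psi^2\zeta^2\mathbf 1_{\zeta\le\Theta/2}+\iint\psi^2\bigl(\zeta-\tfrac12\Theta\bigr)_+^2 ,
\]
where the second part is at most $\sup_t\|\psi\|_2^2\int_0^T\|(\zeta-\frac12\Theta)_+\|_\infty^2$. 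The first part and the $\phi$ part are handled by $\|\psi\|_\infty^2\le C\|\psi\|_2\|\psi_x\|_2$. For the $\phi$ part we write $\phi^2\le C\Phi(v/V)$ and rely on the $v$-bounds.

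\emph{Step 2: the cut-off temperature estimate.} We multiply the $\zeta$-equation by $(\zeta-\Theta)_+$, and the momentum equation by $2\psi(\zeta-\Theta)_+$ (the more precise choice is $\frac12\Theta$ in the statement), and add. The combination $\frac12\psi^2+c_\nu\zeta$ removes the troublesome term $\mu u_x^2/v$ up to $\mu\psi\psi_x\zeta_x$-type products. This produces
\[
\sup_t\int(\zeta-\Theta)_+^2+\iint\theta^\beta|((\zeta-\Theta)_+)_x|^2 .
\]
The right-hand side contains $\iint p|\psi_x|(\zeta-\Theta)_+$ and $\iint\mu|\psi\psi_x\zeta_x|\mathbf 1_{\zeta>\Theta}$. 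The first is bounded by $\varepsilon\iint\theta\psi_x^2\cdot$(cut-off) plus $C\iint\theta(\zeta-\Theta)_+^2/\theta$. On $\{\zeta>\Theta\}$ we have $\theta\ge 2\Theta$, so the weight $\theta^{\beta-2}$ becomes comparable to the needed weights. The second is absorbed using Step 1 ($\iint\psi^2\psi_x^2$) together with $\theta^\beta\zeta_x^2$.

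Next we use the one-dimensional bound
\[
\|(\zeta-\tfrac12\Theta)_+\|_\infty^2\le C\int_{\{\zeta>\Theta/2\}}\theta^{-\beta}\,dx\cdot\int\theta^\beta\zeta_x^2\,dx .
\]
The measure of $\{\zeta>\Theta/2\}$, i.e. $\theta>\frac32\Theta$, is bounded by $C\int\Theta\Phi(\theta/\Theta)$, which is finite by \eqref{basic}. Since $\beta<1$, we can also interpolate $\theta^{-\beta}$ against $\theta^{1-\beta}$. This gives the $\|(\zeta-\frac12\Theta)_+\|_\infty^2$ term. For $\theta\psi_x^2$ we multiply the basic entropy identity with the weight structure: $\theta\psi_x^2\le C\theta^{\alpha-1}\psi_x^2\cdot\theta^{2-\alpha}$, and on the high-temperature set $\theta^{2-\alpha}\le C(1+\|(\zeta-\frac12\Theta)_+\|_\infty^2)$. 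The low-temperature weight $\theta^{-1}\zeta_x^2$ follows from $\theta^{\beta-2}$ when $\theta\le C$; on the complement we use $\theta^\beta$.

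\emph{Step 3: closing the estimate.} All terms carrying $\Theta_x$, $U_x$, $\widetilde R_i$ are bounded by Lemma~\ref{decay}, Lemma~\ref{lem:remaining-profile-integrals} and \eqref{important}, which gives $\delta^rC(M)$. The remaining terms form a closed system. A Gronwall argument, or absorption with small $\varepsilon$, then yields \eqref{new-nergy}.

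The main obstacle is Step 2's high-temperature coupling. There, $\iint\theta\psi_x^2$ and $\|(\zeta-\frac12\Theta)_+\|_\infty^2$ feed each other. We must check that the restriction $\beta<1$ makes the exponents close, so that the bound is linear in the unknowns rather than superlinear. If they fail to close, we would first try interposing $\theta^{(\beta+1)/2}$ as in \eqref{v-theta}.
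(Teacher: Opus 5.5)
Your overall architecture (the cut-off multipliers $(\zeta-\Theta)_+$ and $2\psi(\zeta-\Theta)_+$, a velocity $L^p$ estimate, and a Gronwall closure) matches the paper's. As written, however, it does not close, for two concrete reasons.

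First, you never treat the term that $2\psi(\zeta-\Theta)_+$ produces from $\psi_t$. Since $2\psi\psi_t(\zeta-\Theta)_+=\partial_t[\psi^2(\zeta-\Theta)_+]-\psi^2\partial_t(\zeta-\Theta)_+$, one gets $\iint_{\Omega_2}\psi^2\zeta_t$ with $\Omega_2=\{\zeta>\Theta\}$. This is not controlled at this level and must be replaced through the temperature equation. Besides $\iint_{\Omega_2}\mu\psi^2\psi_x^2/v$ and $\iint_{\Omega_2}\theta\psi^2|\psi_x|$, this brings in the heat-flux term $c_\nu^{-1}\iint_{\Omega_2}\psi^2(\kappa\theta_x/v-\kappa_1\Theta_x/V)_x$. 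Integrate by parts with $\mathbf 1_{\Omega_2}$ replaced by a nondecreasing cut-off $\varphi_\eta(\zeta-\Theta)$, so that the $\varphi_\eta'$-term has a favourable sign. What remains is $\iint_{\Omega_2}\theta^\beta\psi\psi_x\zeta_x$, and hence you must control $\iint_{\Omega_2}\theta^\beta\psi^2\psi_x^2$. The only way to bound this by available dissipation is $\theta^\beta\psi^2\le\varepsilon\theta+C_\varepsilon|\psi|^{2/(1-\beta)}$, which requires $\iint|\psi|^b\psi_x^2$ with $b=2/(1-\beta)$. For $0<\beta<1$ this is strictly more than the $\iint\psi^2\psi_x^2$ given by your $\psi^3$ multiplier. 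You cannot pass from one to the other without a factor $\|\psi\|_\infty^{b-2}$, which is only bounded by $C(M)$ and destroys the form $C+\delta^rC(M)$. So Step~1 has to use the multiplier $|\psi|^b\psi$; the $L^4$ bound in the statement then follows by interpolating between $L^2$ and $L^{b+2}$. This, not the $L^\infty$ interpolation you mention, is where $\beta<1$ enters.

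Second, Step~1 is not uniform in $T$. After Young's inequality you must bound $\iint\psi^2(\phi^2+\zeta^2\mathbf 1_{\{\zeta\le\Theta/2\}})$. The inequality $\|\psi\|_\infty^2\le C\|\psi\|_2\|\psi_x\|_2$ only yields $C\int_0^T\|\psi_x\|_2\,ds$, whereas only $\int_0^T\|\psi_x\|_2^2\,ds$ is available. Keep $\psi_x$ linear instead:
on $\{\theta\le3\Theta\}$ the pressure term is at most $C\int_0^T\||\psi|^b\psi_x\|_2\,ds$, because $\int\phi^2+\int_{\{\theta\le3\Theta\}}\zeta^2\le C$.
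Then $\|\psi\|_\infty^2\le2\|\psi\|_2\|\psi_x\|_2$ gives $\||\psi|^b\psi_x\|_2\le C_\varepsilon\|\psi_x\|_2^2+\varepsilon\int|\psi|^b\psi_x^2$, with one more interpolation if $b>4$.
Finally use $\iint\psi_x^2\le\varepsilon\iint\theta\psi_x^2+C_\varepsilon$.
For $b>2$, the part on $\{\theta>3\Theta\}$ can no longer be bounded by $\sup_t\|\psi\|_2^2\int_0^T\|(\zeta-\frac12\Theta)_+\|_\infty^2$. It needs a Gronwall coefficient whose time integral is already known, such as $\sup_x(\zeta-\Theta/2)_+^{\beta+2}/\zeta\le C\int\theta^{\beta-2}\zeta_x^2+C(M)\int\zeta^2\Theta_x^2$.

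Two smaller points. $\iint\theta\psi_x^2$ does not follow from multiplying the basic dissipation by $1+\|(\zeta-\frac12\Theta)_+\|_\infty^2$, since a product of two $L^1_t$ functions need not be integrable. It comes instead from the term $\iint\mu\psi_x^2(\zeta-\Theta)_+/v$, which keeps a good sign after the two identities are added. Also, your bound for $\|(\zeta-\frac12\Theta)_+\|_\infty^2$ has an $O(1)$ constant in front of $\int\theta^\beta\zeta_x^2$, so it cannot be absorbed. Use instead $\|(\theta-\frac32\Theta)_+\|_\infty^2\le C\int_{\{\theta>3\Theta/2\}}\theta^{-1}\zeta_x^2+C(M)\int\zeta^2\Theta_x^2$, obtained by integrating $\partial_x(\theta-\frac32\Theta)_+^2$ and using $\int_{\{\theta>3\Theta/2\}}\theta\le C$. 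Combine it with $\theta^{-1}\le\varepsilon\theta^\beta+C_\varepsilon\theta^{\beta-2}$ on that set.
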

\begin{proof}
For $a>1$ and every $\beta\geq0$, let
\begin{equation}\label{omega_a}
\Omega_a(t)=\{x\in\mathbb R:\theta(x,t)>a\Theta(x,t)\}.
\end{equation}
By \eqref{basic} and \eqref{v},
\begin{equation}\label{omega-bound}
\sup_{0\leq t\leq T}\int_{\Omega_a(t)}\theta\,dx
 \leq C(a)\sup_{0\leq t\leq T}\int\Phi(\theta/\Theta)\,dx\leq C(a).
\end{equation}
The same entropy bound gives
\begin{equation}\label{cedu}
\sup_{0\leq t\leq T}\left(
 |\Omega_a(t)|+|\{\theta<a^{-1}\Theta\}(t)|
 +\int_{\{\theta\leq a\Theta\}}\zeta^2dx\right)\leq C(a).
\end{equation}
Multiplying \eqref{perturb}$_3$ by $(\zeta-\Theta)_+$ gives
\begin{align}
&\frac{c_\nu}{2}\int(\zeta-\Theta)_+^2dx
 +\tilde\kappa\int_0^t\!\int_{\Omega_2}\frac{\theta^\beta\zeta_x^2}{v}\,dxds
 =\frac{c_\nu}{2}\int(\zeta_0-\Theta(\cdot,0))_+^2dx\notag\\
&\quad-\int_0^t\!\int\frac{R\zeta+R\Theta}{v}\psi_x(\zeta-\Theta)_+dxds
 -\int_0^t\!\int\frac{R\zeta-p_+\phi}{v}U_x(\zeta-\Theta)_+dxds\notag\\
&\quad+\tilde\kappa\int_0^t\!\int_{\Omega_2}
       \frac{\Theta^\beta\zeta_x\Theta_x}{V}\,dxds
 -\tilde\kappa\int_0^t\!\int_{\Omega_2}
       \frac{(\Theta^\beta v-\theta^\beta V)\Theta_x^2}{vV}\,dxds\notag\\
&\quad+\int_0^t\!\int\frac{\mu }v\psi_x^2(\zeta-\Theta)_+dxds
 +2\int_0^t\!\int\frac{\mu }v\psi_xU_x(\zeta-\Theta)_+dxds\notag\\
&\quad+\int_0^t\!\int
 \left(\frac{\mu }v-\frac{\mu_1 }V\right)U_x^2(\zeta-\Theta)_+dxds
 -\int_0^t\!\int\widetilde R_2(\zeta-\Theta)_+dxds
 -c_\nu\int_0^t\!\int\Theta_t(\zeta-\Theta)_+dxds.
 \label{5.1}
\end{align}
Multiplying \eqref{perturb}$_2$ by $2\psi(\zeta-\Theta)_+$, we obtain
\begin{align}
&\int\psi^2(\zeta-\Theta)_+dx
 +2\int_0^t\!\int\frac{\mu }v\psi_x^2(\zeta-\Theta)_+dxds
 =\int\psi_0^2(\zeta_0-\Theta(\cdot,0))_+dx\notag\\
&\quad+2\int_0^t\!\int\frac{R\zeta-p_+\phi}{v}\psi_x(\zeta-\Theta)_+dxds
 +2\int_0^t\!\int_{\Omega_2}\frac{R\zeta-p_+\phi}{v}\psi\zeta_xdxds\notag\\
&\quad-2\int_0^t\!\int_{\Omega_2}\frac{R\zeta-p_+\phi}{v}\psi\Theta_xdxds
 -2\int_0^t\!\int
 \left(\frac{\mu }v-\frac{\mu_1 }V\right)U_x\psi_x(\zeta-\Theta)_+dxds\notag\\
&\quad-2\int_0^t\!\int_{\Omega_2}\frac{\mu }v\psi\psi_x\zeta_xdxds
 -2\int_0^t\!\int_{\Omega_2}
 \left(\frac{\mu }v-\frac{\mu_1 }V\right)\psi U_x\zeta_xdxds\notag\\
&\quad+2\int_0^t\!\int_{\Omega_2}\frac{\mu }v\psi\psi_x\Theta_xdxds
 +2\int_0^t\!\int_{\Omega_2}
 \left(\frac{\mu }v-\frac{\mu_1 }V\right)\psi U_x\Theta_xdxds\notag\\
&\quad-2\int_0^t\!\int\psi\widetilde R_1(\zeta-\Theta)_+dxds
 +\int_0^t\!\int_{\Omega_2}\psi^2\zeta_tdxds
 -\int_0^t\!\int_{\Omega_2}\psi^2\Theta_tdxds.
 \label{5.2}
\end{align}
Adding \eqref{5.1} and \eqref{5.2}, and substituting \eqref{perturb}$_3$, gives
\begin{align}
&\int\left\{\frac{c_\nu}{2}(\zeta-\Theta)_+^2+\psi^2(\zeta-\Theta)_+\right\}dx
 +\int_0^t\!\int\frac{\mu }v\psi_x^2(\zeta-\Theta)_+dxds
 +\tilde\kappa\int_0^t\!\int_{\Omega_2}\frac{\theta^\beta\zeta_x^2}{v}\,dxds\notag\\
&=\int\left\{\frac{c_\nu}{2}(\zeta_0-\Theta(\cdot,0))_+^2
                    +\psi_0^2(\zeta_0-\Theta(\cdot,0))_+\right\}dx\notag\\
&\quad+\int_0^t\!\int\frac{R\zeta-2p_+\phi-R\Theta}{v}\psi_x(\zeta-\Theta)_+dxds
 -\int_0^t\!\int\frac{R\zeta-p_+\phi}{v}U_x(\zeta-\Theta)_+dxds\notag\\
&\quad+\tilde\kappa\int_0^t\!\int_{\Omega_2}\frac{\Theta^\beta\zeta_x\Theta_x}{V}\,dxds
 -\tilde\kappa\int_0^t\!\int_{\Omega_2}
                \frac{(\Theta^\beta v-\theta^\beta V)\Theta_x^2}{vV}\,dxds\notag\\
&\quad+2\int_0^t\!\int\frac{\mu_1 }V\psi_xU_x(\zeta-\Theta)_+dxds
 +\int_0^t\!\int
 \left(\frac{\mu }v-\frac{\mu_1 }V\right)U_x^2(\zeta-\Theta)_+dxds\notag\\
&\quad+2\int_0^t\!\int_{\Omega_2}\frac{R\zeta-p_+\phi}{v}\psi\zeta_xdxds
 -2\int_0^t\!\int_{\Omega_2}\frac{R\zeta-p_+\phi}{v}\psi\Theta_xdxds\notag\\
&\quad-2\int_0^t\!\int_{\Omega_2}\frac{\mu }v\psi\psi_x\zeta_xdxds
 -2\int_0^t\!\int_{\Omega_2}
 \left(\frac{\mu }v-\frac{\mu_1 }V\right)\psi U_x\zeta_xdxds\notag\\
&\quad+2\int_0^t\!\int_{\Omega_2}\frac{\mu }v\psi\psi_x\Theta_xdxds
 +2\int_0^t\!\int_{\Omega_2}
 \left(\frac{\mu }v-\frac{\mu_1 }V\right)\psi U_x\Theta_xdxds\notag\\
&\quad-2\int_0^t\!\int\psi\widetilde R_1(\zeta-\Theta)_+dxds
 -\int_0^t\!\int\widetilde R_2(\zeta-\Theta)_+dxds
 -c_\nu\int_0^t\!\int\Theta_t(\zeta-\Theta)_+dxds
 -\int_0^t\!\int_{\Omega_2}\psi^2\Theta_tdxds\notag\\
&\quad+\frac1{c_\nu}\int_0^t\!\int_{\Omega_2}\psi^2
 \left\{\frac{\mu }v(\psi_x^2+2\psi_xU_x)
       +\left(\frac{\mu }v-\frac{\mu_1 }V\right)U_x^2\right\}dxds
 -\frac1{c_\nu}\int_0^t\!\int_{\Omega_2}\psi^2\widetilde R_2dxds\notag\\
&\quad-\frac1{c_\nu}\int_0^t\!\int_{\Omega_2}\psi^2
       \left(\frac{R\theta}{v}\psi_x+\frac{R\zeta-p_+\phi}{v}U_x\right)dxds
 +\frac{\tilde\kappa}{c_\nu}\int_0^t\!\int_{\Omega_2}\psi^2
       \left(\frac{\theta^\beta\theta_x}v-\frac{\Theta^\beta\Theta_x}V\right)_xdxds\notag\\
&\equiv\int\left\{\frac{c_\nu}{2}(\zeta_0-\Theta(\cdot,0))_+^2
                    +\psi_0^2(\zeta_0-\Theta(\cdot,0))_+\right\}dx
       +\sum_{i=1}^{20}I_i.
 \label{long}
\end{align}
Here $c\leq\mu \leq C$ follows from $\alpha\log M\leq1$.
By \eqref{basic}, \eqref{v} and \eqref{omega-bound},
\begin{align}
|I_1|&\leq\frac14\int_0^t\!\int\frac{\mu }v\psi_x^2(\zeta-\Theta)_+dxds
 +C\int_0^t\!\int_{\Omega_2}(\zeta^2+\phi^2+1)(\zeta-\Theta)_+dxds\notag\\
&\leq\frac14\int_0^t\!\int\frac{\mu }v\psi_x^2(\zeta-\Theta)_+dxds
 +C\int_0^t\left\|\left(\zeta-\frac12\Theta\right)_+\right\|_\infty^2
                      \int_{\Omega_{3/2}}(\theta+\phi^2)dxds\notag\\
&\leq\frac14\int_0^t\!\int\frac{\mu }v\psi_x^2(\zeta-\Theta)_+dxds
 +C\int_0^t\left\|\left(\zeta-\frac12\Theta\right)_+\right\|_\infty^2ds.
 \label{i1}
\end{align}
The Gaussian decay $|U_x|+\Theta_x^2\le C\delta w^2$ from
Lemma~\ref{decay} and the Gaussian estimate \eqref{important} imply,
for any $\varepsilon>0$,
\begin{align}
&|I_2|+|I_3|+|I_4|\notag\\
&\leq C(M)\int_0^t\!\int(\phi^2+\zeta^2)(|U_x|+\Theta_x^2)dxds
 +\varepsilon\int_0^t\!\int_{\Omega_2}\frac{\theta^\beta\zeta_x^2}v\,dxds\notag\\
&\leq\varepsilon\int_0^t\!\int_{\Omega_2}\frac{\theta^\beta\zeta_x^2}v\,dxds
                  +\delta^r C_\varepsilon(M),\label{i3}\\
&|I_5|+|I_6|+|I_{10}|+|I_{11}|+|I_{12}|\notag\\
&\leq C(M)\left(\int_0^t\!\int(\psi_x^2+\zeta_x^2)dxds\right)^{1/2}
 \left(\int_0^t\!\int(\psi^2+\zeta^2)(U_x^2+\Theta_x^2)dxds\right)^{1/2}\notag\\
&\quad+C(M)\int_0^t\!\int(\phi^2+\psi^2+\zeta^2)(U_x^2+\Theta_x^2)dxds
 \leq\delta^r C(M),\label{i4}\\
&|I_{13}|+|I_{14}|+|I_{15}|+|I_{16}|+|I_{18}|\notag\\
&\leq C(M)\int_0^t(\|\widetilde R_1\|_2+\|\widetilde R_2\|_2)ds
 +C(M)\int_0^t\!\int|\Theta_t|(\psi^2+\zeta^2)dxds
 \leq\delta^r C(M).
 \label{i10}
\end{align}
The remaining pressure and viscosity terms satisfy
\begin{align}
|I_7|+|I_8|
&\leq\varepsilon\int_0^t\!\int_{\Omega_2}\theta^\beta\zeta_x^2dxds
 +C_\varepsilon\int_0^t\!\int_{\Omega_2}(\zeta^2+\phi^2)\psi^2dxds
 +C\int_0^t\!\int_{\Omega_2}\Theta_x^2dxds\notag\\
&\leq\varepsilon\int_0^t\!\int\theta^\beta\zeta_x^2dxds
 +C_\varepsilon\int_0^t\left(
       \left\|\left(\zeta-\frac12\Theta\right)_+\right\|_\infty^2
                           +\|\psi\|_\infty^4\right)ds+\delta^r C(M),\label{i7}\\
|I_9|&\leq\varepsilon\int_0^t\!\int_{\Omega_2}\theta^\beta\zeta_x^2dxds
 +C_\varepsilon\int_0^t\!\int\psi^2\psi_x^2dxds,\label{i9}\\
|I_{17}|+|I_{19}|
&\leq C\int_0^t\!\int\psi^2\psi_x^2dxds
 +C\int_0^t\!\int_{\Omega_2}\theta\psi^2|\psi_x|dxds+\delta^r C(M)\notag\\
&\leq C\int_0^t\!\int\psi^2\psi_x^2dxds
 +C\int_0^t\left\|\left(\zeta-\frac12\Theta\right)_+\right\|_\infty^2ds
 +\delta^r C(M).
 \label{i17}
\end{align}
With the nondecreasing cutoff
\begin{equation}\label{cut-off}
\varphi_\eta(z)=\begin{cases}0,&z\leq0,\\z/\eta,&0<z<\eta,\\1,&z\geq\eta,\end{cases}
\end{equation}
integration by parts gives
\begin{align}
I_{20}
&=\frac{\tilde\kappa}{c_\nu}\lim_{\eta\downarrow0}
 \int_0^t\!\int\varphi_\eta(\zeta-\Theta)\psi^2
             \left(\frac{\theta^\beta(\zeta_x-\Theta_x)}v\right)_xdxds\notag\\
&\quad+\frac{\tilde\kappa}{c_\nu}\int_0^t\!\int_{\Omega_2}\psi^2
             \left(\frac{2\theta^\beta\Theta_x}v
                          -\frac{\Theta^\beta\Theta_x}V\right)_xdxds
 \equiv I_{20}^1+I_{20}^2.\label{i20}
\end{align}
Since $\varphi_\eta'\geq0$,
\begin{align}
I_{20}^1
&=-\frac{2\tilde\kappa}{c_\nu}\lim_{\eta\downarrow0}
 \int_0^t\!\int\varphi_\eta(\zeta-\Theta)
             \frac{\theta^\beta\psi\psi_x(\zeta_x-\Theta_x)}v\,dxds\notag\\
&\quad-\frac{\tilde\kappa}{c_\nu}\lim_{\eta\downarrow0}
 \int_0^t\!\int\varphi_\eta'(\zeta-\Theta)
             \frac{\theta^\beta\psi^2(\zeta_x-\Theta_x)^2}v\,dxds\notag\\
&\leq\varepsilon\int_0^t\!\int_{\Omega_2}\theta^\beta\zeta_x^2dxds
 +C_\varepsilon\int_0^t\!\int_{\Omega_2}\theta^\beta\psi^2\psi_x^2dxds
 +\delta^r C_\varepsilon(M)\notag\\
&\leq\varepsilon\int_0^t\!\int(\theta^\beta\zeta_x^2+\theta\psi_x^2)dxds
 +C_\varepsilon\int_0^t\!\int|\psi|^{2/(1-\beta)}\psi_x^2dxds
 +\delta^r C_\varepsilon(M).
 \label{i201}
\end{align}
The last inequality uses
$\theta^\beta\psi^2\leq\varepsilon\theta+C_\varepsilon|\psi|^{2/(1-\beta)}$
for $0<\beta<1$; for $\beta=0$ the last factor is $\psi^2$.
Moreover,
\begin{align}
I_{20}^2
&=\frac{\tilde\kappa}{c_\nu}\int_0^t\!\int_{\Omega_2}\psi^2
 \left\{\frac{2\beta\theta^{\beta-1}}v\zeta_x\Theta_x
 +\left(\frac{2\beta\theta^{\beta-1}}v
                    -\frac{\beta\Theta^{\beta-1}}V\right)\Theta_x^2
 +\left(\frac{2\theta^\beta}v-\frac{\Theta^\beta}V\right)\Theta_{xx}\right.\notag\\
&\hspace{48mm}\left.
 -\frac{2\theta^\beta}{v^2}\phi_x\Theta_x
 +\left(\frac{\Theta^\beta}{V^2}-\frac{2\theta^\beta}{v^2}\right)V_x\Theta_x
 \right\}dxds\notag\\
&\leq\frac{\tilde\kappa}{8}\int_0^t\!\int_{\Omega_2}
                       \frac{\theta^\beta\zeta_x^2}v\,dxds
 +C\int_0^t\!\int\psi^4(\Theta_x^2+|\Theta_{xx}|+V_x^2)dxds\notag\\
&\quad+C(M)\int_0^t\!\int\zeta^2(\Theta_x^2+|\Theta_{xx}|+V_x^2)dxds
 +C(M)\left(\int_0^t\!\int\theta\phi_x^2dxds\right)^{1/2}
                 \left(\int_0^t\!\int\psi^2\Theta_x^2dxds\right)^{1/2}\notag\\
&\leq\frac{\tilde\kappa}{8}\int_0^t\!\int_{\Omega_2}
                       \frac{\theta^\beta\zeta_x^2}v\,dxds
 +C\int_0^t\|\psi\|_\infty^4
                       \int(\Theta_x^2+|\Theta_{xx}|+V_x^2)dxds
 +\delta^r C(M)\notag\\
&\leq\frac{\tilde\kappa}{8}\int_0^t\!\int
                       \frac{\theta^\beta\zeta_x^2}v\,dxds
 +C\int_0^t\|\psi\|_\infty^4ds+\delta^r C(M).
 \label{i202}
\end{align}
By \eqref{basic} and \eqref{omega-bound},
\begin{align}
\int_0^t\!\int(\theta\psi_x^2+\theta^\beta\zeta_x^2)dxds
&\leq C\int_0^t\!\int_{\Omega_2}
       \left(\frac{\mu }v\psi_x^2(\zeta-\Theta)_+
                       +\frac{\theta^\beta\zeta_x^2}v\right)dxds+C.\label{5.4}
\end{align}
The Sobolev and Young inequalities give
\begin{align*}
\int_0^t\|\psi\|_\infty^4ds
&\leq4\sup_{0\leq s\leq t}\|\psi(s)\|_2^2
                      \int_0^t\|\psi_x\|_2^2ds
 \leq\varepsilon\int_0^t\!\int\theta\psi_x^2dxds+C_\varepsilon.
\end{align*}
Also,
\begin{align*}
\int_0^t\!\int\psi^2\psi_x^2dxds
 &\leq\int_0^t\!\int(1+|\psi|^{2/(1-\beta)})\psi_x^2dxds\\
&\leq\varepsilon\int_0^t\!\int\theta\psi_x^2dxds+C_\varepsilon
       +\int_0^t\!\int|\psi|^{2/(1-\beta)}\psi_x^2dxds.
\end{align*}
Substitution in \eqref{long} yields
\begin{align}
&\int\bigl[(\zeta-\Theta)_+^2+\psi^2(\zeta-\Theta)_+\bigr]dx
 +c\int_0^t\!\int(\theta\psi_x^2+\theta^\beta\zeta_x^2)dxds\notag\\
&\quad\leq C+C\int_0^t\left\|\left(\zeta-\frac12\Theta\right)_+\right\|_\infty^2ds
 +C\int_0^t\!\int|\psi|^{2/(1-\beta)}\psi_x^2dxds+\delta^r C(M).
 \label{5.3}
\end{align}
Set $b=2/(1-\beta)$. Multiplication of
\eqref{perturb}$_2$ by $|\psi|^b\psi$ gives
\begin{align}
&\frac1{b+2}\int|\psi|^{b+2}dx
 +(b+1)\int_0^t\!\int\frac{\mu }v|\psi|^b\psi_x^2dxds
 =\frac1{b+2}\int|\psi_0|^{b+2}dx\notag\\
&\quad+(b+1)\int_0^t\!\int_{\{\theta\leq3\Theta\}}
           \frac{R\zeta-p_+\phi}{v}|\psi|^b\psi_xdxds
 +(b+1)\int_0^t\!\int_{\Omega_3}
           \frac{R\zeta-p_+\phi}{v}|\psi|^b\psi_xdxds\notag\\
&\quad-(b+1)\int_0^t\!\int
 \left(\frac{\mu }v-\frac{\mu_1 }V\right)U_x|\psi|^b\psi_xdxds
 -\int_0^t\!\int\widetilde R_1|\psi|^b\psi\,dxds
 \equiv\frac1{b+2}\int|\psi_0|^{b+2}dx+\sum_{i=1}^4J_i.
 \label{5.5}
\end{align}
By \eqref{basic} and \eqref{cedu},
\begin{align}
|J_1|&\leq C\int_0^t\||\psi|^b\psi_x\|_2
 \left(\int\phi^2dx+\int_{\{\theta\leq3\Theta\}}\zeta^2dx\right)^{1/2}ds
 \leq C\int_0^t\||\psi|^b\psi_x\|_2ds.
 \label{J11}
\end{align}
For $0\leq\beta\leq1/2$, $2\leq b\leq4$ and $b\beta=b-2$. Thus
\begin{align}
\||\psi|^b\psi_x\|_2
&\leq\|\psi\|_\infty^2\||\psi|^{b\beta}\psi_x\|_2\notag\\
&\leq C\|\psi_x\|_2
 \left(\int|\psi|^{2b-4}\psi_x^2dx\right)^{1/2}
 \leq C\|\psi_x\|_2
 \left(\int\psi_x^2dx+\int|\psi|^b\psi_x^2dx\right)^{1/2}\notag\\
&\leq C_\varepsilon\int\psi_x^2dx
                         +\varepsilon\int|\psi|^b\psi_x^2dx,
 \label{J12}
\end{align}
where
\begin{equation}\label{J13}
\|\psi\|_\infty^2
 \leq2\int|\psi\psi_x|dx\leq2\|\psi\|_2\|\psi_x\|_2.
\end{equation}
For $1/2<\beta<1$, $b>4$ and
\begin{align}
\||\psi|^b\psi_x\|_2
&\leq\|\psi\|_\infty^{b/2}
                      \left(\int|\psi|^b\psi_x^2dx\right)^{1/2}\notag\\
&\leq\varepsilon\int|\psi|^b\psi_x^2dx+C_\varepsilon\|\psi\|_\infty^b
 \leq2\varepsilon\int|\psi|^b\psi_x^2dx+C_\varepsilon\int\psi_x^2dx.
 \label{J14}
\end{align}
Indeed,
\begin{align*}
\|\psi\|_\infty^b
&\leq b\int|\psi|^{b-1}|\psi_x|dx
 \leq b\left(\int|\psi|^{b-4}\psi_x^2dx\right)^{1/2}
             \left(\int|\psi|^{b+2}dx\right)^{1/2}\\
&\leq C\|\psi\|_\infty^{b/2}
                      \left(\int|\psi|^{b-4}\psi_x^2dx\right)^{1/2}.
\end{align*}
For any $\eta>0$, it follows that
\begin{align*}
\|\psi\|_\infty^b
&\leq C\int|\psi|^{b-4}\psi_x^2dx
 \leq\eta\int|\psi|^b\psi_x^2dx+C_\eta\int\psi_x^2dx.
\end{align*}
Consequently, for $0\leq\beta<1$,
\begin{equation}
|J_1|\leq C_\varepsilon\int_0^t\!\int\psi_x^2dxds
                    +C\varepsilon\int_0^t\!\int|\psi|^b\psi_x^2dxds.
 \label{J1-C}
\end{equation}
On $\Omega_3$, $\theta\leq C\zeta$ and $2b/(b+2)\leq\beta+1$; hence
\begin{align}
|J_2|
&\leq\varepsilon\int_0^t\!\int|\psi|^b\psi_x^2dxds
                 +C_\varepsilon\int_0^t\!\int_{\Omega_3}\zeta^2|\psi|^bdxds\notag\\
&\leq\varepsilon\int_0^t\!\int|\psi|^b\psi_x^2dxds
 +C_\varepsilon\int_0^t\sup_x
  \frac{(\zeta-\Theta/2)_+^{\beta+2}}{\zeta}
                         \int(\zeta^2+|\psi|^{b+2})dxds.
 \label{J21}
\end{align}
The quotient is understood to be zero when $\zeta\leq\Theta/2$. Direct calculation gives
\begin{align}
\frac{(\zeta-\Theta/2)_+^{\beta+2}}{\zeta}(x,t)
&=\int_{-\infty}^x\left\{
 \frac{(\zeta-\Theta/2)_+^{\beta+1}[(\beta+1)\zeta+\Theta/2]}{\zeta^2}\zeta_y
 -\frac{\beta+2}{2}\frac{(\zeta-\Theta/2)_+^{\beta+1}}\zeta\Theta_y\right\}dy\notag\\
&\leq C\left(\int\theta^{\beta-2}\zeta_x^2dx
                       +C(M)\int\zeta^2\Theta_x^2dx\right)^{1/2}
 \left(\sup_y\frac{(\zeta-\Theta/2)_+^{\beta+2}}\zeta
                                  \int_{\Omega_{3/2}}\zeta\,dy\right)^{1/2}.\notag
\end{align}
Taking the supremum in $x$ and using \eqref{omega-bound}, we obtain
\begin{equation}\label{J22}
\sup_x\frac{(\zeta-\Theta/2)_+^{\beta+2}}\zeta
\leq C\int\theta^{\beta-2}\zeta_x^2dx+C(M)\int\zeta^2\Theta_x^2dx.
\end{equation}
The decay of $U_x$, the Gaussian estimate \eqref{important}, and the
residual estimate for $\widetilde R_1$ yield
\begin{align}
|J_3|+|J_4|
&\leq\varepsilon\int_0^t\!\int|\psi|^b\psi_x^2dxds
 +C_\varepsilon(M)\int_0^t\!\int(\phi^2+\psi^2+\zeta^2)U_x^2dxds\notag\\
&\quad+\sup_s\|\psi(s)\|_\infty^b\sup_s\|\psi(s)\|_2
                              \int_0^t\|\widetilde R_1\|_2ds
 \leq\varepsilon\int_0^t\!\int|\psi|^b\psi_x^2dxds+\delta^r C_\varepsilon(M).
 \label{j3}
\end{align}
Consequently,
\begin{align}
&\int|\psi|^{b+2}dx+c\int_0^t\!\int|\psi|^b\psi_x^2dxds\notag\\
&\quad\leq C+C_\varepsilon\int_0^t\!\int\psi_x^2dxds
 +C\int_0^t\left(\int\theta^{\beta-2}\zeta_x^2dx\right)
                         \left(\int\zeta^2+|\psi|^{b+2}dx\right)ds
 +\delta^r C(M).
 \label{s2}
\end{align}
For every $\beta\geq0$ and $\chi\geq-1$, \eqref{omega-bound} gives
\begin{align}
 (\theta-3\Theta/2)_+^{\chi+3}(x,t)
&=(\chi+3)\int_{-\infty}^x
 (\theta-3\Theta/2)_+^{\chi+2}(\zeta_y-\Theta_y/2)dy\notag\\
&\leq C\left(\int_{\Omega_{3/2}}
          \theta^\chi(\zeta_x^2+\Theta_x^2)dx\right)^{1/2}
 \left(\int_{\Omega_{3/2}}
     (\theta-3\Theta/2)_+^{2\chi+4}\theta^{-\chi}dx\right)^{1/2}\notag\\
&\leq C\left(\int_{\Omega_{3/2}}
          \theta^\chi(\zeta_x^2+\Theta_x^2)dx\right)^{1/2}
             \|(\theta-3\Theta/2)_+\|_\infty^{(\chi+3)/2}
             \left(\int_{\Omega_{3/2}}\theta\,dx\right)^{1/2}.\notag
\end{align}
Using \eqref{omega-bound} once more gives
\begin{equation}\label{psx22}
\|(\theta-3\Theta/2)_+\|_\infty^{\chi+3}
\leq C\int_{\Omega_{3/2}}\theta^\chi\zeta_x^2dx
                         +C(M)\int\zeta^2\Theta_x^2dx.
\end{equation}
Taking $\chi=-1$ and using
$\theta^{-1}\leq\varepsilon\theta^\beta+C_\varepsilon\theta^{\beta-2}$ gives
\begin{equation}\label{cutoff-max-integral}
\int_0^t\left\|\left(\zeta-\frac12\Theta\right)_+\right\|_\infty^2ds
 \leq\varepsilon\int_0^t\!\int\theta^\beta\zeta_x^2dxds
                               +C_\varepsilon+\delta^r C_\varepsilon(M).
\end{equation}
Adding a sufficiently large fixed multiple of \eqref{s2} to \eqref{5.3},
and taking $\varepsilon$ small, yields
\begin{align}
&\int\bigl[(\zeta-\Theta)_+^2+\psi^2(\zeta-\Theta)_++|\psi|^{b+2}\bigr]dx
 +c\int_0^t\!\int\bigl[(\theta+|\psi|^b)\psi_x^2+\theta^\beta\zeta_x^2\bigr]dxds\notag\\
&\quad\leq C+\delta^r C(M)
 +C\int_0^t\left(\int\theta^{\beta-2}\zeta_x^2dx\right)
       \left(1+\int\bigl[(\zeta-\Theta)_+^2+|\psi|^{b+2}\bigr]dx\right)ds.
 \label{5.7}
\end{align}
Gronwall's inequality and \eqref{basic} give
\begin{align*}
&\sup_{0\leq s\leq t}\int(\zeta^2+|\psi|^{b+2})dx
 +\int_0^t\!\int\bigl[(1+\theta+|\psi|^b)\psi_x^2
                                  +\theta^\beta\zeta_x^2\bigr]dxds
  \leq C+\delta^r C(M).
\end{align*}
In particular,
\[
\|\psi\|_4^4
 \leq\|\psi\|_2^{2(b-2)/b}\|\psi\|_{b+2}^{2(b+2)/b}
 \leq C+\delta^rC(M).
\]
Since $\theta^{-1}\leq\theta^{\beta-2}+\theta^\beta$ and
$\psi^2\leq1+|\psi|^b$, \eqref{cutoff-max-integral} completes
the proof of \eqref{new-nergy}.
\end{proof}

\begin{lemma}\label{beta>1}
For $\beta\geq1$, one has
\begin{equation}\label{beta-new}
\int_0^T\!\int(\psi_x^2+\theta^{-1}\zeta_x^2)dxdt
 +\int_0^T\|(\theta-3\Theta/2)_+\|_\infty^2dt
 \leq C+\delta^r C(M).
\end{equation}
\end{lemma}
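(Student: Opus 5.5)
The plan is to split the two quantities in \eqref{beta-new} into high- and low-temperature parts, with the threshold set by a multiple of $\Theta$. On the high-temperature part I would use the dissipation $\int_0^T\!\int\theta^{\beta-2}\zeta_x^2\,dxdt$ from \eqref{basic}, which for $\beta\ge1$ is the \emph{stronger} weight there. The low-temperature part I would treat with a separate convex test function in the temperature equation. The point behind the split is elementary: for $\beta\ge1$ and $\theta\ge c>0$ we have $\theta^{-1}\le C\theta^{\beta-2}$, and for $\theta\le C$ we have $1\le C\theta^{-1}$.

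\emph{Step 1 (maximum of the high-temperature part).} I would apply \eqref{psx22} with $\chi=-1$. On $\Omega_{3/2}$ we have $\theta\ge\frac32\Theta\ge c$, so $\theta^{-1}\le C\theta^{\beta-2}$ there, and therefore
\[
\|(\theta-3\Theta/2)_+\|_\infty^2\le C\int\frac{\theta^{\beta-2}\zeta_x^2}{v}\,dx+C(M)\int\zeta^2\Theta_x^2\,dx .
\]
Integrating in time and using \eqref{basic}, \eqref{v} and $\int_0^T\!\int\zeta^2\Theta_x^2\le C\delta\int_0^T\!\int\zeta^2w^2\le\delta^rC(M)$ (Lemma~\ref{lemma5}) gives the second term of \eqref{beta-new}. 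The same computation also bounds $\int_0^T\!\int_{\{\theta\ge\Theta/2\}}\theta^{-1}\zeta_x^2\,dxdt$ by $C$.

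\emph{Step 2 (velocity dissipation).} From \eqref{basic}, $\int_0^T\!\int\psi_x^2/\theta\le C$. This already controls $\psi_x^2$ on $\{\theta\le2\Theta\}$. On $\Omega_2$ I need $\int_0^T\!\int\theta\psi_x^2$. For this I would repeat the computation \eqref{long} (testing with $(\zeta-\Theta)_+$ and $2\psi(\zeta-\Theta)_+$), keeping its dissipation $\int\!\int\frac{\mu}{v}\psi_x^2(\zeta-\Theta)_+$ and $\int\!\int_{\Omega_2}\theta^\beta\zeta_x^2$. The terms $I_1,I_7,I_{17},I_{19}$ are then closed with Step 1 in place of \eqref{cutoff-max-integral}. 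The term $I_{20}^1$ produces $\int\!\int_{\Omega_2}\theta^\beta\psi^2\psi_x^2$. For $\beta\ge1$ the splitting $\theta^\beta\psi^2\le\varepsilon\theta+C|\psi|^{2/(1-\beta)}$ used in Lemma~\ref{lemma-ll-1} is no longer available, so here I would bound $\psi^2\le\|\psi\|_\infty^2\le C\|\psi\|_2\|\psi_x\|_2$. I would then absorb $\int\!\int\theta^\beta\psi_x^2\|\psi_x\|_2$ by combining the $L^4$-type bound for $\psi$ from testing $\eqref{perturb}_2$ with $\psi^3$ (as in \eqref{5.5} with $b=2$) and Gronwall against $\int\theta^{\beta-2}\zeta_x^2\,dx\in L^1_t$, as in \eqref{5.7}. \emph{I expect this absorption to be the main obstacle:} the weight $\theta^\beta$ in $I_{20}^1$ is stronger than the available $\theta\psi_x^2$. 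If it fails, I would instead weight the momentum test by $\psi(\zeta-\Theta)_+^{\beta}$, so that the produced dissipation $\theta^\beta\psi_x^2$ matches.

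\emph{Step 3 (low temperature, $\theta^{-1}\zeta_x^2$ on $\{\theta<\Theta/2\}$).} Here I would take a $C^1$ convex function $F$ with $F\equiv0$ for $\theta\ge\Theta/2$ and $F''(\theta)=\theta^{-\beta-1}$ below, normalized relative to $\Theta$ via $F(\theta/\Theta)$. Since $\theta_0\ge\underline\theta_0$, the initial value $\int F(\theta_0/\Theta_0)$ is finite, and $F\ge0$. I would multiply the energy equation by $F'\le0$ and integrate over $(0,T)\times\mathbb R$:
\begin{itemize}
\item the heat flux term yields the dissipation $\iint\kappa F''\theta_x^2/v\sim\iint\theta^{-1}\zeta_x^2$ on the low-temperature set, up to $\Theta_x$ errors that are $\delta^rC(M)$ by Lemma~\ref{lemma5};
\item the viscous heating term $\mu u_x^2F'/v$ has a good sign;
\item the pressure work $|F'|\frac{R\theta}{v}|u_x|\le C\theta^{1-\beta}|u_x|$ is handled by Young against the good-sign viscous heating, with the remainder $C\theta^{2-2\beta}|F'|^{-1}\cdots$ bounded on this set by the Step~2 dissipation of $\psi_x$;
\item the $U_x,\Theta_t$ terms are bounded by $\delta^rC(M)$.
\end{itemize}
Combining Steps 1–3 gives \eqref{beta-new}.
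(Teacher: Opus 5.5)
Your Step~1 is correct, and it is a shortcut. On $\Omega_{3/2}$ one has $\theta\ge c$, so $\theta^{-1}\le C\theta^{\beta-2}$ for $\beta\ge1$. Then \eqref{psx22} with $\chi=-1$ and \eqref{basic} give the sup-norm term, and $\theta^{-1}\zeta_x^2$ on $\{\theta\ge\Theta/2\}$, directly; the paper reaches these only after its low-temperature estimate with $q=\beta$.

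\textbf{Step~2 does not close}, for the reason you suspect. $I_{20}^1$ produces $\iint_{\Omega_2}\theta^\beta\psi^2\psi_x^2$, which carries no factor $\delta$, while the only dissipation available is $\iint\theta\psi_x^2$ and $\iint\psi^2\psi_x^2$. Replacing $\psi^2$ by $C\|\psi\|_2\|\psi_x\|_2$ leaves $\int_0^t\|\psi_x\|_2\int\theta^{\beta-1}\cdot\theta\psi_x^2\,dx\,ds$. This needs bounds on $\sup_t\|\psi_x\|_2$ and on $\theta^{\beta-1}$ better than the a priori bounds by $M$; the former is obtained only in Lemma~\ref{high-order}, which uses the present lemma. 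It is also not of Gronwall form $a(t)y(t)$ with $a\in L^1_t$. The fallback weight $(\zeta-\Theta)_+^\beta$ reproduces the mismatch one level up. Substituting the energy equation for $\partial_t(\zeta-\Theta)_+^\beta$ creates $\psi\psi_x(\zeta-\Theta)_+^{\beta-1}\theta^\beta\zeta_x$, which again needs $\theta^{2\beta}\psi_x^2$ or a pointwise bound on $\psi$.

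The missing idea is that you need only $\iint_{\Omega_3}\psi_x^2$, not $\theta$-weighted velocity dissipation. The paper tests \eqref{ener} with the bounded function $(\theta-2\Theta)_+/\theta$. The viscous heating then gives $\iint\frac{\mu}{v}u_x^2\frac{(\theta-2\Theta)_+}{\theta}\ge c\iint_{\Omega_3}u_x^2$. The heat flux contributes only $\Theta\theta^{\beta-2}\theta_x^2$ and $\theta^{\beta-1}\theta_x\Theta_x$, both controlled by \eqref{basic} up to $\delta^rC(M)$. The pressure work gives $\varepsilon\iint\psi_x^2+C_\varepsilon\int_0^t\|(\theta-\frac32\Theta)_+\|_\infty^{\beta+1}ds$, and \eqref{psx22} with $\chi=\beta-2\ge-1$ bounds the last term by $\iint\theta^{\beta-2}\zeta_x^2$. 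This is exactly where $\beta\ge1$ enters, and no $\psi^2\psi_x^2$ term ever appears; see \eqref{psx21}--\eqref{psx24}.

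\textbf{In Step~3 the pressure work is mishandled.} Young's inequality against $\frac{\mu}{v}u_x^2|F'|$ leaves $C\theta^{2-2\beta}|F'|^{-1}$. This contains no $\psi_x$, so no velocity dissipation can control it, and it blows up at the threshold where $F'\to0$. Applying Young directly to $\theta|F'||u_x|$ instead leaves $C\theta^2|F'|$. That carries only one power of the vanishing factor, whose sup norm is merely in $L^2_t$. A temperature-only remainder on a set of bounded measure integrates to $O(T)$ in any case.

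The remedy, as in \eqref{beta-1}--\eqref{theta^beta}, is to keep the vanishing factor squared in the remainder:
\begin{itemize}
\item Write the multiplier as $\theta^{-q}(1-4(\theta/\Theta)^q)_+$.
\item Apply Young against $\varepsilon\theta^{-q}u_x^2$ \emph{without} that factor. Bound $\theta^{-q}u_x^2$ by the viscous heating away from the threshold, and by the basic quantity $\iint\theta^{-1}\psi_x^2$ in the layer near it, as in \eqref{beta-3}.
\item The remainder becomes $\|(1-4(\theta/\Theta)^q)_+\|_\infty^2\int_{\{\theta<4^{-1/q}\Theta\}}\theta^{1-q}dx$. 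The first factor is in $L^1_t$ by \eqref{beta-7}, through $\int\theta^{\beta-2}\zeta_x^2$. The second is controlled by the energy via \eqref{beta-2}, so Gronwall closes.
\end{itemize}
Your $F$ is essentially this test function with $q=\beta$; the paper takes $q\ge\beta+4$ and interpolates down to $q=\beta$. With this repair Step~3 should go through, and it needs nothing from Step~2.
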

\begin{proof}
The temperature equation reads
\begin{equation}\label{ener}
c_\nu\theta_t+\frac{R\theta}{v}u_x
 =\tilde\kappa\left(\frac{\theta^\beta\theta_x}v\right)_x
                         +\frac{\mu }v u_x^2.
\end{equation}
For $q\geq\beta+4$, multiplication by $-(\theta^{-q}-4\Theta^{-q})_+$ gives
\begin{align}
&c_\nu\int\!\int_{\theta}^{4^{-1/q}\Theta}
                 (y^{-q}-4\Theta^{-q})_+dy\,dx
 +q\tilde\kappa\int_0^t\!\int_{\{\theta<4^{-1/q}\Theta\}}
                 \frac{\theta^{\beta-q-1}\theta_x^2}v\,dxds\notag\\
&\quad+\int_0^t\!\int\frac{\mu }v
                       u_x^2(\theta^{-q}-4\Theta^{-q})_+dxds\notag\\
&=c_\nu\int\!\int_{\theta_0}^{4^{-1/q}\Theta(\cdot,0)}
                 (y^{-q}-4\Theta(\cdot,0)^{-q})_+dy\,dx
 +R\int_0^t\!\int\frac{\theta^{1-q}}v u_x
                        (1-4(\theta/\Theta)^q)_+dxds\notag\\
&\quad+4q\tilde\kappa\int_0^t\!\int_{\{\theta<4^{-1/q}\Theta\}}
                    \frac{\theta^\beta\theta_x}v\Theta^{-q-1}\Theta_xdxds
 +4qc_\nu\int_0^t\!\int_{\{\theta<4^{-1/q}\Theta\}}
             (4^{-1/q}\Theta-\theta)\Theta^{-q-1}\Theta_tdxds.
 \label{beta-1}
\end{align}
By \eqref{basic} and \eqref{cedu},
\begin{equation}\label{beta-2}
\int_{\{\theta<4^{-1/q}\Theta\}}\theta^{1-q}dx
 \leq(q-1)\int\!\int_{\theta}^{4^{-1/q}\Theta}
                          (y^{-q}-4\Theta^{-q})_+dy\,dx+C(q).
\end{equation}
The velocity term satisfies
\begin{align}
&\int_0^t\!\int\theta^{-q}u_x^2dxds
 \leq C(q)\int_0^t\!\int\frac{\mu }v
                        u_x^2(\theta^{-q}-4\Theta^{-q})_+dxds\notag\\
&\hspace{32mm}+C(q)\int_0^t\!\int\theta^{-1}\psi_x^2dxds
                   +C(q,M)\int_0^t\!\int U_x^2dxds\notag\\
&\hspace{15mm}\leq C(q)\int_0^t\!\int\frac{\mu }v
                        u_x^2(\theta^{-q}-4\Theta^{-q})_+dxds
                   +C(q)+\delta^r C(q,M).\label{beta-3}
\end{align}
For the terms containing $\Theta_x$ or $\Theta_t$,
Lemma~\ref{decay} gives
\begin{align}
&\int_0^t\!\int_{\{\theta<4^{-1/q}\Theta\}}
 \left[\theta^{\beta-q-1}\Theta_x^2
          +(4^{-1/q}\Theta-\theta)\Theta^{-q-1}|\Theta_t|\right]dxds\notag\\
&\hspace{15mm}\leq C(q,M)\int_0^t\!\int\zeta^2(\Theta_x^2+|\Theta_t|)dxds
 \leq\delta^r C(q,M),\label{beta-4}\\
&\left|\int_0^t\!\int_{\{\theta<4^{-1/q}\Theta\}}
                \frac{\theta^\beta\theta_x}v\Theta^{-q-1}\Theta_xdxds\right|\notag\\
&\hspace{15mm}\leq C(q)\int_0^t\!\int\theta^{\beta-2}\zeta_x^2dxds
      +C(q,M)\int_0^t\!\int_{\{\theta<4^{-1/q}\Theta\}}\Theta_x^2dxds
 \leq C(q)+\delta^r C(q,M).
 \label{beta-6}
\end{align}
Furthermore,
\begin{align}
\|(1-4(\theta/\Theta)^q)_+\|_\infty^2
&\leq16q^2\left(\int_{\{\theta<4^{-1/q}\Theta\}}
 (\theta/\Theta)^{q-1}
          \frac{|\Theta\zeta_x-\zeta\Theta_x|}{\Theta^2}dx\right)^2\notag\\
&\leq C(q)\int\theta^{\beta-2}\zeta_x^2dx
                       +C(q)\int\zeta^2\Theta_x^2dx.\label{beta-7}
\end{align}
Young's inequality yields
\begin{align*}
\left|R\int\frac{\theta^{1-q}}v u_x
                        (1-4(\theta/\Theta)^q)_+dx\right|
&\leq\varepsilon\int\theta^{-q}u_x^2dx
 +C(q,\varepsilon)\|(1-4(\theta/\Theta)^q)_+\|_\infty^2
            \int_{\{\theta<4^{-1/q}\Theta\}}\theta^{1-q}dx.
\end{align*}
Choosing $\varepsilon$ small in \eqref{beta-1}, and applying Gronwall's
inequality, we obtain
\begin{align}
&\sup_{0\leq s\leq t}\int\!\int_{\theta}^{4^{-1/q}\Theta}
                 (y^{-q}-4\Theta^{-q})_+dy\,dx
 +\int_0^t\!\int\bigl(\theta^{\beta-q-1}\zeta_x^2
                                      +\theta^{-q}\psi_x^2\bigr)dxds\notag\\
&\quad\leq C(q)+\delta^r C(q,M).
 \label{theta^beta}
\end{align}
Here the integrals over $\{\theta\geq4^{-1/q}\Theta\}$ are bounded by
\eqref{basic}. For $1\leq q\leq\beta+4$, using
$\theta^{\beta-q-1}\leq\theta^{\beta-2}+\theta^{-5}$ and
$\theta^{-q}\leq\theta^{-1}+\theta^{-\beta-4}$, we obtain
\begin{align*}
\int_0^t\!\int
 (\theta^{\beta-q-1}\zeta_x^2+\theta^{-q}\psi_x^2)dxds
&\leq\int_0^t\!\int
 [(\theta^{\beta-2}+\theta^{-5})\zeta_x^2
             +(\theta^{-1}+\theta^{-\beta-4})\psi_x^2]dxds
 \leq C(q)+\delta^rC(q,M).
\end{align*}
Thus \eqref{theta^beta} gives the derivative bounds for every $q\geq1$.
Taking $q=\beta$ and then $\chi=-1$ in \eqref{psx22}, we obtain
\begin{equation}\label{beta-gradient}
\int_0^t\!\int\theta^{-1}\zeta_x^2dxds
 +\int_0^t\|(\theta-3\Theta/2)_+\|_\infty^2ds
 \leq C+\delta^r C(M).
\end{equation}
Multiplication of \eqref{ener} by $(\theta-2\Theta)_+/\theta$ gives
\begin{align}
&\int_0^t\!\int\frac{\mu }v u_x^2
                               \frac{(\theta-2\Theta)_+}\theta\,dxds\notag\\
&=c_\nu\int\left[\int_{2\Theta(x,t)}^{\theta(x,t)}
                       \frac{(y-2\Theta(x,t))_+}{y}dy
          -\int_{2\Theta(x,0)}^{\theta_0(x)}
                       \frac{(y-2\Theta(x,0))_+}{y}dy\right]dx\notag\\
&\quad+2c_\nu\int_0^t\!\int_{\Omega_2}\log\frac{\theta}{2\Theta}\Theta_tdxds
 +2\tilde\kappa\int_0^t\!\int_{\Omega_2}
       \left(\frac{\Theta\theta^{\beta-2}\theta_x^2}v
                       -\frac{\theta^{\beta-1}\theta_x\Theta_x}v\right)dxds\notag\\
&\quad+R\int_0^t\!\int\frac{(\theta-2\Theta)_+}v(\psi_x+U_x)dxds\notag\\
&\leq C+C(M)\int_0^t\!\int\zeta^2(|\Theta_t|+|U_x|+\Theta_x^2)dxds
 +C\int_0^t\!\int\theta^{\beta-2}\zeta_x^2dxds\notag\\
&\quad+\varepsilon\int_0^t\!\int\psi_x^2dxds
       +C_\varepsilon\int_0^t\!\int_{\Omega_2}(\theta-2\Theta)_+^2dxds\notag\\
&\leq C+\delta^r C_\varepsilon(M)
 +\varepsilon\int_0^t\!\int\psi_x^2dxds
 +C_\varepsilon\int_0^t\|(\theta-3\Theta/2)_+\|_\infty^{\beta+1}ds
                         \sup_s\int_{\Omega_2(s)}\theta\,dx\notag\\
&\leq C+\delta^r C_\varepsilon(M)
 +\varepsilon\int_0^t\!\int\psi_x^2dxds
 +C_\varepsilon\int_0^t\|(\theta-3\Theta/2)_+\|_\infty^{\beta+1}ds.
 \label{psx21}
\end{align}
Taking $\chi=\beta-2\geq-1$ in \eqref{psx22} gives
\begin{align}
\int_0^t\|(\theta-3\Theta/2)_+\|_\infty^{\beta+1}ds
&\leq C\int_0^t\!\int_{\Omega_{3/2}}
                   \theta^{\beta-2}\zeta_x^2dxds
       +C(M)\int_0^t\!\int\zeta^2\Theta_x^2dxds\notag\\
&\leq C+\delta^rC(M).
 \label{psx23}
\end{align}
Finally,
\begin{align}
\int_0^t\!\int\psi_x^2dxds
&\leq C\int_0^t\!\int_{\{\theta\leq3\Theta\}}
                                      \frac{\psi_x^2}\theta\,dxds
 +C\int_0^t\!\int_{\Omega_3}(u_x^2+U_x^2)dxds\notag\\
&\leq C+C\int_0^t\!\int\frac{\mu }v u_x^2
                                   \frac{(\theta-2\Theta)_+}\theta\,dxds
                                   +\delta^r C(M)\notag\\
&\leq C+\delta^r C_\varepsilon(M)
                           +C\varepsilon\int_0^t\!\int\psi_x^2dxds.
 \label{psx24}
\end{align}
Taking $\varepsilon$ small completes the proof.
\end{proof}

\begin{lemma}\label{high-order}
Under the a priori hypothesis \eqref{space}, if
$\alpha C(M)+\delta^rC(M)$ is sufficiently small, then
\begin{equation}\label{high-deri}
\sup_{0\leq t\leq T}\int(\phi_x^2+\psi_x^2)\,dx
+\int_0^T\!\int\bigl\{\zeta_x^2+(1+\theta)\phi_x^2
+\psi_{xx}^2+\psi_t^2\bigr\}\,dxdt\leq C_0.
\end{equation}
\end{lemma}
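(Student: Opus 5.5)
The estimate is obtained in three stages: the $\phi_x$ bound from \eqref{phi-x3}, then the $\psi_x$ bound from the momentum equation, and finally $\psi_{xx}$ and $\psi_t$ from the equation itself. All constants are allowed to depend only on the data once the factors $\alpha C(M)$ and $\delta^r C(M)$ are small.

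\textbf{Stage 1: the $\phi_x$ bound.} Start from \eqref{phi-x3}. Its right-hand side contains three kinds of terms.
\begin{itemize}
\item The quantities $\sup_t\|\psi\|_2^2$ and $\iint(\psi_x^2/v+\zeta_x^2/(v\theta))$. The first is bounded by \eqref{basic}. The second is bounded by \eqref{new-nergy} when $0\le\beta<1$ and by \eqref{beta-new} when $\beta\ge1$; in both cases we also use the $v$-bounds \eqref{v}. This gives $C+\delta^rC(M)$.
\item The Gaussian-weighted terms $(\phi^2+\zeta^2)(U_x^2+\Theta_x^2)$ and $\widetilde R_1^2$. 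By Lemma~\ref{decay}, Lemma~\ref{lemma5} and \eqref{profile:integrated-derivatives}, these contribute $\delta^rC(M)$.
\item The remainder $\alpha C(M)$.
\end{itemize}
The initial term $\|(\log(v_0/V_0))_x\|_2$ is controlled by $N_0$ and $\underline v_0$.

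Since $v$ now has data-dependent bounds \eqref{v}, we have $\phi_x=v\,\tilde v_x/\tilde v+\phi V_x/V$. Hence $\sup_t\|\phi_x\|_2^2$ and $\iint\theta\phi_x^2$ are bounded by $C_0+\alpha C(M)+\delta^rC(M)$. Adding $\iint\phi_x^2$ needs a positive lower bound on $\theta$, or else $\int\phi_x^2\le C\int\theta\phi_x^2$ on $\{\theta\ge\Theta/2\}$. I would use the dissipation $\int p[\tilde v_x/\tilde v]^2/\mu$ directly, since $p=R\theta/v$. The low-temperature region is exactly the delicate point. There I would split at $\theta\ge\Theta/2$ and bound the remaining part using $\sup\|\phi_x\|_2$ together with $|\{\theta<\Theta/2\}|\le C$ from \eqref{cedu}. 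If this fails, I would postpone $\iint\phi_x^2$ until the temperature lower bound is available and record only $\iint(1+\theta)\phi_x^2$ with $\theta$ bounded below later.

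\textbf{Stage 2: the $\psi_x$ bound.} Multiply \eqref{perturb}$_2$ by $-\psi_{xx}$ and integrate. The viscous term produces $\iint\mu\psi_{xx}^2/v$ plus commutators involving $\mu_x\psi_x$, $v_x\psi_x$ and the profile difference $((\mu/v-\mu_1/V)U_x)_x$. The pressure term gives $\iint|(p-p_+)_x\psi_{xx}|$, and $(p-p_+)_x$ involves $\zeta_x$, $\phi_x$, $\Theta_x$ and $V_x$.
\begin{itemize}
\item Terms with $\phi_x$ and $\zeta_x$ are handled by Young's inequality, using Stage 1 and the previous lemmas.
\item The product $\phi_x\psi_x\psi_{xx}$ coming from $v_x$ is bounded by $\|\psi_x\|_\infty\|\phi_x\|_2\|\psi_{xx}\|_2$. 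Then Gagliardo--Nirenberg, $\|\psi_x\|_\infty^2\le C\|\psi_x\|_2\|\psi_{xx}\|_2$, gives a term $\tfrac14\|\psi_{xx}\|_2^2+C\|\phi_x\|_2^4\|\psi_x\|_2^2$, which Gronwall absorbs because $\sup\|\phi_x\|_2$ is bounded and $\int\|\psi_x\|_2^2$ is finite.
\item The $\mu_x=\alpha\mu\theta_x/\theta$ term carries the factor $\alpha$ and is bounded by $\alpha C(M)$, as in Lemma~\ref{basic-lemma}.
\end{itemize}
This yields $\sup\|\psi_x\|_2^2+\iint\psi_{xx}^2\le C+\alpha C(M)+\delta^rC(M)$. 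Since $\psi_t=-(p-p_+)_x+(\mu u_x/v-\mu_1U_x/V)_x-\widetilde R_1$, the bound on $\iint\psi_t^2$ follows directly. Combined with the earlier $\zeta_x$ estimates, this gives \eqref{high-deri} once $\alpha C(M)+\delta^rC(M)\le1$.

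\textbf{Main obstacle.} I expect the main obstacle to be the low-temperature control needed for $\iint\phi_x^2$. The dissipation in \eqref{phi-x3} is weighted by $\theta$, and the lemma is stated before the temperature lower bound is established. So the weight must be removed using only the entropy and cutoff information: \eqref{cedu} together with the sets $\{\theta<4^{-1/q}\Theta\}$ from Lemma~\ref{beta>1}, where $\int\theta^{-q}\psi_x^2$ is controlled.
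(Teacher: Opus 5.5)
Your overall structure matches the paper's: Stage 1 from \eqref{phi-x3}, then the multiplier $-\psi_{xx}$, then $\psi_t$ read off from the equation. Your bound $\sup_t\|\phi_x\|_2^2+\iint\theta\phi_x^2\le C_0+\alpha C(M)+\delta^rC(M)$ is correct. The passage to the unweighted $\iint\phi_x^2$, however, fails as you propose it. A bound on $|\{\theta<\Theta/2\}|$ does not help, because $\phi_x$ is only in $L^2$. You get nothing better than $\int_{\{\theta<\Theta/2\}}\phi_x^2dx\le\|\phi_x\|_2^2$, whose time integral grows like $T$. Your fallback is not available either. The quantity $(1+\theta)\phi_x^2$ already contains $\phi_x^2$, and Lemma~\ref{theta-supbel} is itself derived from \eqref{high-deri}, so deferring to the temperature lower bound is circular. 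The missing ingredient is an $L^\infty$ bound on the cold part that is integrable in time. Since $\partial_x(\Theta-\theta)_+^2=-2(\Theta-\theta)_+\zeta_x$, and $\theta\le\Theta\le C$ wherever $(\Theta-\theta)_+>0$, we have
\[
\|(\Theta-\theta)_+\|_\infty^4\le C\int_{\{\theta<\Theta\}}\zeta^2\,dx\int\theta^{-1}\zeta_x^2\,dx .
\]
Then \eqref{cedu} and the $\theta^{-1}\zeta_x^2$ dissipation in \eqref{new-nergy} or \eqref{beta-new} give $\int_0^T\|(\Theta-\theta)_+\|_\infty^4dt\le C_0+\delta^rC(M)$. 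Combine this with the pointwise inequality $1\le\theta/\Theta+(\Theta-\theta)_+/\Theta$ and with $(\Theta-\theta)_+\le\varepsilon+C_\varepsilon(\Theta-\theta)_+^4$. This yields
\[
\iint\phi_x^2\le C\iint\theta\phi_x^2+\frac12\iint\phi_x^2+C\sup_s\|\phi_x(s)\|_2^2\int_0^t\|(\Theta-\theta)_+\|_\infty^4ds ,
\]
which is how the paper closes this step.

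Stage 2 also has two high-temperature problems you do not address, and the second is a genuinely missing estimate.

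\emph{First}, Young's inequality applied to $R\theta\phi_x\psi_{xx}/v^2$ produces $\iint\theta^2\phi_x^2$, which Stage 1 does not control. Using $\theta\le M$ instead gives $C(M)$ with no small factor, and the bootstrap no longer closes. You need $\theta^2\phi_x^2\le C\theta\phi_x^2+4(\theta-\frac32\Theta)_+^2\phi_x^2$ together with $\int_0^T\|(\theta-\frac32\Theta)_+\|_\infty^2dt\le C_0+\delta^rC(M)$ from \eqref{new-nergy} or \eqref{beta-new}. The same issue arises again in the $\psi_t$ bound.

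\emph{Second}, ``the earlier $\zeta_x$ estimates'' do not contain $\iint\zeta_x^2$ when $1\le\beta<2$. The available weights are $\theta^{\beta-2}$, $\theta^{-1}$, and $\theta^{\beta-q-1}$ with $q\ge1$ from \eqref{theta^beta}. All of these are negative powers, so they degenerate at high temperature. For $0\le\beta<1$, the bound $\theta^{-1}+\theta^\beta\ge1$ in \eqref{new-nergy} suffices, and for $\beta\ge2$ one takes $q=\beta-1$. Yet $\iint\zeta_x^2$ is both an input to your $\psi_{xx}$ estimate, through the term $R\zeta_x/v$, and part of the conclusion \eqref{high-deri}. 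The paper obtains it by multiplying \eqref{ener} by $(\theta-2\Theta)_+\theta^{-\beta/2}$. The resulting dissipation $\iint_{\Omega_2}\theta^{\beta/2}\theta_x^2$ is coercive precisely because $\beta<2$. But the pressure-work term $\theta^{1-\beta/2}(\theta-2\Theta)_+\psi_x$ and the viscous heating leave $C\int_0^t\|\psi_x\|_2^4ds$ on the right-hand side. So $\zeta_x$ and $\psi_x$ must be closed jointly:
\[
\|\psi_x(t)\|_2^2+c\int_0^t\|\psi_{xx}\|_2^2ds\le C_0+C\int_0^t\|\psi_x\|_2^2\,\|\psi_x\|_2^2\,ds ,
\]
followed by Gronwall's inequality using $\int_0^T\|\psi_x\|_2^2dt\le C_0$. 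Without this coupled step, your Stage~2 does not close in that range of $\beta$.
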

\begin{proof}
By \eqref{phi-x3}, Lemmas~\ref{lemma-ll-1} and
\ref{beta>1}, and $\alpha\log M\leq\log2$,
\[
\sup_{0\leq s\leq t}\int\phi_x^2\,dx
+\int_0^t\!\int\theta\phi_x^2\,dxds
\leq C_0+\alpha C(M)+\delta^rC(M).
\]
Here we used $\tilde\mu/2\leq\mu\leq2\tilde\mu$ and
$(\log(v/V))_x=\phi_x/v-\phi V_x/(vV)$. Thus
\begin{align}
&\sup_{0\leq s\leq t}\int\phi_x^2\,dx
+c\int_0^t\!\int(1+\theta)\phi_x^2\,dxds\notag\\
&\quad\leq C_0+\alpha C(M)+\delta^rC(M)
+C\int_0^t\!\int(\Theta-\theta)_+\phi_x^2\,dxds\notag\\
&\quad\leq C_0+\alpha C(M)+\delta^rC(M)
+\frac c2\int_0^t\!\int\phi_x^2\,dxds
+C\int_0^t\|(\Theta-\theta)_+\|_\infty^4
\int\phi_x^2\,dxds.\label{phix-p}
\end{align}
Moreover,
\begin{align*}
\int_0^t\|(\Theta-\theta)_+\|_\infty^4ds
&\leq C\int_0^t\left\{
\int(\Theta-\theta)_+\theta^{-1/2}|\zeta_x|\,dx
+\int(\Theta-\theta)_+|\zeta\Theta_x|\,dx\right\}^2ds\\
&\leq C\sup_s\int_{\theta<\Theta}\zeta^2\,dx
\int_0^t\!\int\theta^{-1}\zeta_x^2\,dxds
+C(M)\int_0^t\!\int\zeta^2\Theta_x^2\,dxds\\
&\leq C_0+\delta^rC(M).
\end{align*}
Gronwall's inequality gives
\begin{equation}\label{phix}
\sup_{0\leq t\leq T}\int\phi_x^2\,dx
+\int_0^T\!\int(1+\theta)\phi_x^2\,dxdt\leq C_0.
\end{equation}

Multiplying \eqref{perturb}$_2$ by $-\psi_{xx}$ and integrating, we obtain
\begin{align}
&\frac12\int\psi_x^2\,dx
+\int_0^t\!\int\frac{\mu }v\psi_{xx}^2\,dxds\notag\\
&=\frac12\int\psi_{0x}^2\,dx
+\int_0^t\!\int\left(\frac{R\zeta_x}{v}
-\frac{R\theta\phi_x}{v^2}
-\frac{R\zeta-p_+\phi}{v^2}V_x\right)\psi_{xx}\,dxds\notag\\
&\quad+\int_0^t\!\int\frac{\mu v_x}{v^2}
\psi_x\psi_{xx}\,dxds
-\int_0^t\!\int\frac{\mu_\theta(\theta)\theta_x}{v}
\psi_x\psi_{xx}\,dxds\notag\\
&\quad+\int_0^t\!\int
\left[\left(\frac{\mu_1 }V-\frac{\mu }v\right)U_x\right]_x
\psi_{xx}\,dxds
+\int_0^t\!\int\widetilde R_1\psi_{xx}\,dxds.
\label{6.1}
\end{align}
Combining \eqref{phix} with the $\zeta_x$- and high-temperature estimates
in Lemmas~\ref{lemma-ll-1} and \ref{beta>1}, we obtain
\begin{align}
&\left|\int_0^t\!\int\left(\frac{R\zeta_x}{v}
-\frac{R\theta\phi_x}{v^2}
-\frac{R\zeta-p_+\phi}{v^2}V_x\right)\psi_{xx}\,dxds\right|\notag\\
&\quad\leq\frac18\int_0^t\!\int\frac{\mu }v\psi_{xx}^2\,dxds
+C\int_0^t\!\int\bigl\{\zeta_x^2+\theta^2\phi_x^2
+(\phi^2+\zeta^2)V_x^2\bigr\}\,dxds\notag\\
&\quad\leq\frac18\int_0^t\!\int\frac{\mu }v\psi_{xx}^2\,dxds
+C\int_0^t\!\int\zeta_x^2\,dxds+C_0+\delta^rC(M)\notag\\
&\qquad+C\sup_{0\leq s\leq t}\|\phi_x(s)\|_2^2
\int_0^t\left\|\left(\theta-\frac32\Theta\right)_+\right\|_\infty^2ds\notag\\
&\quad\leq\frac18\int_0^t\!\int\frac{\mu }v\psi_{xx}^2\,dxds
+C\int_0^t\!\int\zeta_x^2\,dxds+C_0+\delta^rC(M).
\label{psi-xx-1}
\end{align}
The two terms containing derivatives of $\mu/v$ satisfy
\begin{align}
&\left|\int_0^t\!\int\frac{\mu v_x}{v^2}
\psi_x\psi_{xx}\,dxds\right|\notag\\
&\quad\leq\frac1{16}\int_0^t\!\int\frac{\mu }v\psi_{xx}^2\,dxds
+C\int_0^t\!\int(\phi_x^2+V_x^2)\psi_x^2\,dxds\notag\\
&\quad\leq\frac1{16}\int_0^t\!\int\frac{\mu }v\psi_{xx}^2\,dxds
+C\sup_s\|\phi_x(s)\|_2^2
\int_0^t\|\psi_x\|_2\|\psi_{xx}\|_2\,ds+\delta^rC(M)\notag\\
&\quad\leq\frac18\int_0^t\!\int\frac{\mu }v\psi_{xx}^2\,dxds
+C\int_0^t\|\psi_x\|_2^2ds+\delta^rC(M),\label{psi-xx-2}\\
&\left|\int_0^t\!\int\frac{\mu_\theta(\theta)\theta_x}{v}
\psi_x\psi_{xx}\,dxds\right|\notag\\
&\quad\leq\alpha C(M)\sup_s\|\theta_x(s)\|_2
\left(\int_0^t\|\psi_x\|_\infty^2ds\right)^{1/2}
\left(\int_0^t\|\psi_{xx}\|_2^2ds\right)^{1/2}\notag\\
&\quad\leq\alpha C(M).
\label{psi-xx-mu}
\end{align}
For $\partial_x\!\left[\left(\frac{\mu_1}{V}-\frac{\mu}{v}\right)U_x\right]$,
we first write
\begin{align}
&\left[\left(\frac{\mu_1 }V-\frac{\mu }v\right)U_x\right]_x
\notag\\
&\quad=\mu \left\{\frac{\phi U_{xx}}{vV}
-\frac{\phi(\phi+2V)U_xV_x}{v^2V^2}
+\frac{U_x\phi_x}{v^2}\right\}
+\frac{\mu_\theta(\theta)\theta_x\phi U_x}{vV}\notag\\
&\qquad+\bigl(\mu_1 -\mu \bigr)
\left(\frac{U_x}{V}\right)_x
+\bigl(\mu_\theta(\Theta)\Theta_x
-\mu_\theta(\theta)\theta_x\bigr)\frac{U_x}{V}.\notag
\end{align}
Hence
\begin{align}
&\left|\int_0^t\!\int
\left[\left(\frac{\mu_1 }V-\frac{\mu }v\right)U_x\right]_x
\psi_{xx}\,dxds\right|\notag\\
&\quad\leq\frac18\int_0^t\!\int\frac{\mu }v\psi_{xx}^2\,dxds
+C(M)\int_0^t\!\int\bigl\{\phi^2U_{xx}^2
+(\phi^2+\phi^4)U_x^2V_x^2+U_x^2\phi_x^2\bigr\}\,dxds
+\alpha C(M)\notag\\
&\quad\leq\frac18\int_0^t\!\int\frac{\mu }v\psi_{xx}^2\,dxds
+\alpha C(M)+\delta^rC(M).\label{psi-xx-3}
\end{align}
Also,
\begin{align}
&\left|\int_0^t\!\int\widetilde R_1\psi_{xx}\,dxds\right|
\leq\frac18\int_0^t\!\int\frac{\mu }v\psi_{xx}^2\,dxds
+C\int_0^t\|\widetilde R_1\|_2^2ds\notag\\
&\qquad\leq\frac18\int_0^t\!\int\frac{\mu }v\psi_{xx}^2\,dxds
+\delta^rC(M).\label{psi-xx-4}
\end{align}
Substitution into \eqref{6.1} gives
\begin{equation}\label{psi-xx}
\|\psi_x(t)\|_2^2+c\int_0^t\|\psi_{xx}\|_2^2ds
\leq C_0+C\int_0^t\!\int\zeta_x^2\,dxds
+\alpha C(M)+\delta^rC(M).
\end{equation}

For $\beta\geq2$, taking $q=\beta-1$ in \eqref{theta^beta} gives
\[
\int_0^T\!\int\zeta_x^2\,dxdt\leq C_0.
\]
For $0\leq\beta<2$, multiply \eqref{ener} by
$(\theta-2\Theta)_+\theta^{-\beta/2}$. Then
\begin{align}
&c_{\nu}\int\!\int_{2\Theta(x,t)}^{\theta(x,t)}
(y-2\Theta(x,t))_+y^{-\beta/2}\,dy\,dx\notag\\
&\quad+\tilde\kappa\int_0^t\!\int_{\Omega_2}
\frac{\theta^{\beta/2}}v
\left(1-\frac\beta2+\frac{\beta\Theta}{\theta}\right)\theta_x^2\,dxds\notag\\
&=c_{\nu}\int\!\int_{2\Theta(x,0)}^{\theta_0(x)}
(y-2\Theta(x,0))_+y^{-\beta/2}\,dy\,dx\notag\\
&\quad+2\tilde\kappa\int_0^t\!\int_{\Omega_2}
\frac{\theta^{\beta/2}\theta_x\Theta_x}{v}\,dxds
-2c_{\nu}\int_0^t\!\int_{\Omega_2}\Theta_t
\int_{2\Theta}^{\theta}y^{-\beta/2}\,dy\,dxds\notag\\
&\quad-R\int_0^t\!\int\frac{\theta^{1-\beta/2}(\theta-2\Theta)_+}{v}
(\psi_x+U_x)\,dxds\notag\\
&\quad+\int_0^t\!\int\frac{\mu }v
(\theta-2\Theta)_+\theta^{-\beta/2}
(\psi_x^2+2\psi_xU_x+U_x^2)\,dxds.\label{psil2l4}
\end{align}
The contributions containing only $\psi_x$ and $\psi_x^2$ in
\eqref{psil2l4} satisfy
\begin{align*}
\int\theta^{1-\beta/2}(\theta-2\Theta)_+|\psi_x|\,dx
&\leq C\int(\theta-2\Theta)_+\theta^{2-\beta/2}\,dx
+C\int(\theta-2\Theta)_+\theta^{-\beta/2}\psi_x^2\,dx\\
&\leq C\left\|\left(\theta-\frac32\Theta\right)_+\right\|_\infty^2
\int_{\Omega_2}\theta^{1-\beta/2}\,dx
+C\|(\theta-2\Theta)_+\|_\infty\|\psi_x\|_2^2\\
&\leq C\left\|\left(\theta-\frac32\Theta\right)_+\right\|_\infty^2
+C\|\psi_x\|_2^4,
\end{align*}
and
\begin{align*}
\int\frac{\mu }v(\theta-2\Theta)_+\theta^{-\beta/2}\psi_x^2\,dx
&\leq C\left\|\left(\theta-\frac32\Theta\right)_+\right\|_\infty^2
+C\|\psi_x\|_2^4.
\end{align*}
The contributions in \eqref{psil2l4} containing $\Theta_x$, $\Theta_t$,
or $U_x$ are bounded by
\begin{align*}
&\frac{(2-\beta)\tilde\kappa}{4}
\int_0^t\!\int_{\Omega_2}\frac{\theta^{\beta/2}\theta_x^2}{v}\,dxds\\
&\quad+C(M)\int_0^t\!\int
\bigl\{\zeta^2(\Theta_x^2+|\Theta_t|+|U_x|+U_x^2)
+U_x^2\psi_x^2\bigr\}\,dxds\\
&\leq\frac{(2-\beta)\tilde\kappa}{4}
\int_0^t\!\int_{\Omega_2}\frac{\theta^{\beta/2}\theta_x^2}{v}\,dxds
+\delta^rC(M).
\end{align*}
Consequently,
\begin{align}
\int_0^t\!\int\zeta_x^2\,dxds
&\leq C\int_0^t\!\int_{\theta\leq2\Theta}\theta^{\beta-2}\zeta_x^2\,dxds
+C\int_0^t\!\int_{\Omega_2}\theta^{\beta/2}\theta_x^2\,dxds
+C\int_0^t\!\int_{\Omega_2}\Theta_x^2\,dxds\notag\\
&\leq C_0+C\int_0^t\|\psi_x\|_2^4ds+\delta^rC(M).
\label{beta0-2}
\end{align}
Together with \eqref{psi-xx},
\begin{align*}
\|\psi_x(t)\|_2^2+c\int_0^t\|\psi_{xx}\|_2^2ds
&\leq C_0+C\int_0^t\|\psi_x\|_2^2\|\psi_x\|_2^2ds.
\end{align*}
Gronwall's inequality gives
\begin{align*}
\sup_{0\leq s\leq t}\|\psi_x(s)\|_2^2
&\leq C_0\exp\left(C\int_0^t\|\psi_x\|_2^2ds\right)\leq C_0.
\end{align*}
Thus, for every $\beta\geq0$,
\begin{equation}\label{zetax-f}
\sup_{0\leq t\leq T}\|\psi_x(t)\|_2^2
+\int_0^T\!\int(\psi_{xx}^2+\psi_x^2+\zeta_x^2)\,dxdt\leq C_0.
\end{equation}
Solving \eqref{perturb}$_2$ for $\psi_t$ and using \eqref{phix},
\eqref{psi-xx}, and \eqref{beta0-2} yields
\begin{align*}
\int_0^T\!\int\psi_t^2\,dxdt
&\leq C\int_0^T\!\int\left\{
\zeta_x^2+\theta^2\phi_x^2+\psi_{xx}^2
+\phi_x^2\psi_x^2+(\phi^2+\zeta^2)V_x^2
+\widetilde R_1^2\right\}\,dxdt\\
&\quad+\alpha C(M)+\delta^rC(M)\leq C_0.
\end{align*}
Combining the displayed $\psi_t$ estimate with \eqref{phix} and
\eqref{zetax-f} establishes \eqref{high-deri}.
\end{proof}

\begin{lemma}\label{theta-supbel}
Under the assumptions of Lemma~\ref{high-order}, there exists $C_0>0$,
independent of $T$ and $M$, such that
\begin{equation}\label{theta-fina}
C_0^{-1}\leq\theta(x,t)\leq C_0,
\qquad (x,t)\in\mathbb R\times[0,T].
\end{equation}
\end{lemma}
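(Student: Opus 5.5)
We need uniform upper and lower bounds for $\theta$, and we derive both from the $H^1$-level information already obtained in Lemma~\ref{high-order}, the basic energy estimate \eqref{basic}, and the positive bounds \eqref{v} for $v$.

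\textbf{Upper bound.} Since $\Theta$ lies between $\theta_-$ and $\theta_+$, it suffices to bound $\|\zeta(t)\|_\infty$ uniformly. By \eqref{basic} and \eqref{cedu}, $\int_{\{\theta\le 2\Theta\}}\zeta^2dx\le C$. By Lemma~\ref{lemma-ll-1} for $0\le\beta<1$, or by \eqref{beta-new} together with the $(\theta-2\Theta)_+$ estimates for $\beta\ge1$, we get $\sup_t\int\zeta^2dx\le C+\delta^rC(M)$. Lemma~\ref{high-order} gives $\sup_t\|\zeta_x\|_2$ only in the integrated sense, so we will not use $H^1$ in space directly. Instead we use the representation \eqref{psx22} with $\chi=\beta-2$ (or $\chi=-1$): it bounds $\|(\theta-\tfrac32\Theta)_+\|_\infty^{\chi+3}$ by a dissipation quantity that is only time-integrable. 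To obtain a pointwise-in-time bound, we will run a maximum-type energy estimate. Multiply \eqref{ener} by $(\theta-2\Theta)_+^{2m-1}$ (or simply by $(\theta-2\Theta)_+$ after first bounding $\int_0^T\|\psi_x\|_\infty^2dt\le C\int_0^T\|\psi_x\|_2\|\psi_{xx}\|_2\le C_0$ from \eqref{high-deri}). The source term $\frac{\mu}{v}u_x^2$ is then controlled by $\|u_x\|_\infty^2$, which is integrable in time, and the pressure term by $\|u_x\|_\infty\int(\theta-2\Theta)_+\theta$. Gronwall then gives $\sup_t\|(\theta-2\Theta)_+\|_{L^p}\le C_0$ uniformly in $p$. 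Letting $p\to\infty$, or more simply combining an $L^2$ bound with the Sobolev step $\|f\|_\infty^2\le\|f\|_2\|f_x\|_2$ applied to $f=\zeta$, where $\sup_t\|\zeta_x\|_2$ is obtained by a further $-\zeta_{xx}$ energy estimate of \eqref{perturb}$_3$ (again using $\int\|\psi_x\|_\infty^2<\infty$ and \eqref{phix}), yields $\theta\le C_0$. We prefer the second route: with $\theta\le M$ from \eqref{space} and $\alpha C(M),\delta^rC(M)$ small, the $\zeta_{xx}$ estimate closes to $\sup_t\|\zeta_x\|_2^2+\int_0^T\|\zeta_{xx}\|_2^2\le C_0$. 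Together with $\|\zeta\|_2\le C_0$, this gives $\|\zeta\|_\infty\le C_0$.

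\textbf{Lower bound.} Boundedness alone does not prevent $\theta$ from approaching zero. We use the negative-power estimate from the proof of Lemma~\ref{beta>1}. Multiply \eqref{ener} by $-(\theta^{-q}-4\Theta^{-q})_+$ with large $q$, which is valid for every $\beta\ge0$ since \eqref{beta-1} was derived independently of $\beta$. Now that $\theta\le C_0$, $v\le C_0$, and $\int_0^T\|u_x\|_\infty^2<\infty$, the pressure term is bounded by $C\|u_x\|_\infty\int(\theta^{-1}-4^{1/q}\Theta^{-1})_+^{q-1}$, and Gronwall yields $\sup_t\int(\theta^{-1}-C\Theta^{-1})_+^{q-1}dx\le C_0^{q}$. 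The $q$-dependence must be uniform enough to take $q\to\infty$, Moser style. Alternatively, the simpler route is the one we will try first: set $h=1/\theta$, note that $\|h\|_{L^\infty}$ follows from $\|(1/\theta-2/\Theta)_+\|_2$ and $\|(1/\theta)_x\|_2=\|\theta_x/\theta^2\|_2$, and bound the latter by continuity/bootstrap using $\theta\ge M^{-1}$ and the small factors. Since this would leave $M$-dependence, we instead use the structure of the equation. Writing it as $c_\nu(\log\theta)_t=\frac{\kappa}{v\theta}\theta_{xx}+\dots+\frac{\mu u_x^2}{v\theta}-\frac{R}{v}u_x$ and discarding the nonnegative viscous heating, a comparison/maximum principle for $\log\theta$ gives
$\min_x\log\theta(t)\ge\min\log\theta_0-C\int_0^t\|u_x\|_\infty ds$.
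The obstacle is that $\int_0^\infty\|u_x\|_\infty ds$ need not be finite; only its square is integrable.

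\textbf{Main obstacle.} This is the uniform-in-time lower bound. We will combine the short-time maximum-principle bound, valid on any window of length $L$ with constant $e^{-C\sqrt L}$, with the large-time smallness of $\|\zeta(t)\|_\infty$. From $\int_0^\infty(\|\zeta_x\|_2^2+\|\zeta_t\|_2^2)dt<\infty$ we get $\|\zeta_x(t)\|_2\to0$, hence $\|\zeta(t)\|_\infty\le C\|\zeta\|_2^{1/2}\|\zeta_x\|_2^{1/2}\to0$ quantitatively. Then for $t\ge T_1$ we have $\theta\ge\frac12\min\Theta$, and on $[0,T_1]$ the window bound applies, with $T_1$ depending only on $C_0$. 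One point needs care: $T_1$ must not depend on $T$ or $M$. We obtain this by showing $\frac{d}{dt}\|\zeta_x\|_2^2$ is integrable, so that $\|\zeta_x(t)\|_2^2\le\varepsilon$ holds outside a set of measure $C_0/\varepsilon$, and at such times the lower bound is restarted.
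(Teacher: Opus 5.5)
Your lower bound is essentially sound once an $M$-independent upper bound is available. The upper bound, however, is not established, and that is a genuine gap.

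Your preferred route claims that the $-\zeta_{xx}$ energy estimate for \eqref{perturb}$_3$ closes to $C_0$ as soon as $\alpha C(M)$ and $\delta^rC(M)$ are small. It does not. The dissipation it produces is only $\tilde\kappa\int\theta^\beta\zeta_{xx}^2/v$. The terms $\beta\tilde\kappa\int\theta^{\beta-1}\theta_x\zeta_x\zeta_{xx}/v$, $\int R\theta\psi_x\zeta_{xx}/v$ and $\int\mu\psi_x^2\zeta_{xx}/v$ lead, after Young's inequality, to $\int\theta^{\beta-2}\zeta_x^4$, $\int\theta^{2-\beta}\psi_x^2$ and $\int\theta^{-\beta}\psi_x^4$. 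In addition, any Sobolev step for $\zeta_x$ needs the unweighted $\|\zeta_{xx}\|_2$. None of these terms carries a factor $\alpha$ or $\delta$, and for every $\beta$ some of the weights can only be controlled through $M^{-1}\le\theta\le M$. The resulting $C(M)$ cannot be absorbed, so the argument is circular.

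Your first route has the defect you yourself point out for the lower bound. Bounding the work term by $\|u_x\|_\infty\int\theta(\theta-2\Theta)_+^{p-1}$ and applying Gronwall gives a factor $\exp(C\int_0^T\|u_x\|_\infty dt)$, which is not uniform in $T$. The uniformity in $p$ needed to let $p\to\infty$ is only asserted.

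The missing ideas are the paper's two steps.
First, multiply \eqref{ener} by $(\theta-2\Theta)_+^{q-1}$. Absorb the work term $\int\theta(\theta-2\Theta)_+^{q-1}|\psi_x|$, via Young's inequality and \eqref{psx22}, into the weighted dissipation $\int\theta^\beta(\theta-2\Theta)_+^{q-2}\zeta_x^2$. No $\int\|u_x\|_\infty dt$ appears, and the choices $q=\beta+2,\beta+4$ give \eqref{int-t+-choices}.
Second, multiply \eqref{perturb}$_3$ by $\theta^\beta\zeta_t$. Since $(\theta^\beta\zeta_t)_x-(\theta^\beta\zeta_x)_t=\beta\theta^{\beta-1}(\Theta_x\zeta_t-\Theta_t\zeta_x)$, the diffusion term becomes $-\frac{\tilde\kappa}{2}\frac{d}{dt}\int\theta^{2\beta}\zeta_x^2/v$. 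What remains is one term with the factor $u_x$ and terms carrying $\Theta_x$ or $\Theta_t$. Thus the coefficient-derivative term that defeats the $-\zeta_{xx}$ estimate never appears. The remaining errors $\int\theta^{\beta+2}\psi_x^2$ and $\int\theta^\beta\psi_x^4$ are bounded by $\iint(\theta^{-1}+\theta^{2\beta}+\theta^{2\beta+2})\zeta_x^2$, $\sup_t\|\psi_x\|_2$ and $\iint\psi_{xx}^2$, all already controlled. Gronwall then gives $\sup_t\int\theta^{2\beta}\zeta_x^2\le C_0$, and \eqref{the-up} follows.

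Given the upper bound, your restart argument for the lower bound matches the paper's. The paper picks $s$ with $\int\zeta^6(x,s)dx\le C_0/T_0$ and propagates via $c_\nu(\theta^{-1})_t-(\frac{\kappa}{v}(\theta^{-1})_x)_x\le\frac{R^2}{4\mu v}$, completing the square with the viscous heating instead of discarding it. Your $e^{-C\sqrt L}$ window bound also works, because $\int_0^T\|u_x\|_\infty^2dt\le C_0$.

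Drop, however, the claim that $\|\zeta_x(t)\|_2\to0$ yields a uniform $T_1$. Also do not try to show that $\frac{d}{dt}\|\zeta_x\|_2^2$ is integrable: that requires $\iint(\zeta_t^2+\zeta_{xx}^2)$, which is available only after \eqref{theta-fina}. Chebyshev's inequality applied to $\int_0^T\|\zeta_x\|_2^2dt\le C_0$ already gives a good restart time in every window of length $L>C_0/\varepsilon$.
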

\begin{proof}
For $q>\beta+1$ and $q\geq2$, multiplying \eqref{ener} by
$(\theta-2\Theta)_+^{q-1}$ gives
\begin{align}
&\frac{c_{\nu}}q\int(\theta-2\Theta)_+^q\,dx
 +(q-1)\tilde\kappa\int_0^t\!\int_{\Omega_2}
 \frac{\theta^\beta(\theta-2\Theta)_+^{q-2}\zeta_x^2}{v}\,dxds\notag\\
&=\frac{c_{\nu}}q\int(\theta_0-2\Theta(x,0))_+^q\,dx
 +(q-1)\tilde\kappa\int_0^t\!\int_{\Omega_2}
 \frac{\theta^\beta(\theta-2\Theta)_+^{q-2}\Theta_x^2}{v}\,dxds\notag\\
&\quad-2c_{\nu}\int_0^t\!\int\Theta_t(\theta-2\Theta)_+^{q-1}\,dxds
 -R\int_0^t\!\int\frac{\theta(\theta-2\Theta)_+^{q-1}}v
 (\psi_x+U_x)\,dxds\notag\\
&\quad+\int_0^t\!\int\frac{\mu }v
 (\theta-2\Theta)_+^{q-1}
 (\psi_x^2+2\psi_xU_x+U_x^2)\,dxds.
\label{int1}
\end{align}
By \eqref{psx22} and \eqref{zetax-f},
\begin{align*}
\int(\theta-2\Theta)_+^{q-1}\psi_x^2\,dx
&\leq\|(\theta-2\Theta)_+\|_\infty^{q-1}\|\psi_x\|_2^2\\
&\leq\varepsilon_2\left\|\left(\theta-\frac32\Theta\right)_+\right\|_\infty^{\beta+q+1}
 +C(q,\varepsilon_2)\|\psi_x\|_2^{2(\beta+q+1)/(\beta+2)}.
\end{align*}
Consequently,
\begin{align*}
\int\theta(\theta-2\Theta)_+^{q-1}|\psi_x|\,dx
&\leq C(\varepsilon_1)\int(\theta-2\Theta)_+^{q-1}\psi_x^2\,dx
 +\varepsilon_1\int\theta^2(\theta-2\Theta)_+^{q-1}\,dx\\
&\leq C(\varepsilon_1)\int(\theta-2\Theta)_+^{q-1}\psi_x^2\,dx
 +C\varepsilon_1\left\|\left(\theta-\frac32\Theta\right)_+\right\|_\infty^q
 \int_{\Omega_2}\theta\,dx\\
&\leq\bigl(C(\varepsilon_1)\varepsilon_2+C\varepsilon_1\bigr)
 \left\|\left(\theta-\frac32\Theta\right)_+\right\|_\infty^{\beta+q+1}
 +C(q,\varepsilon_1,\varepsilon_2)\|\psi_x\|_2^{2(\beta+q+1)/(\beta+2)}\\
&\quad+C\varepsilon_1\left\|\left(\theta-\frac32\Theta\right)_+\right\|_\infty^2.
\end{align*}
To estimate the first term on the right, we use
\begin{align*}
\left\|\left(\theta-\frac32\Theta\right)_+\right\|_\infty^{\beta+q+1}
&\leq C(q)\int_{\theta>3\Theta/2}\theta^{\beta+q-2}\zeta_x^2\,dx
 +C(q,M)\int\zeta^2\Theta_x^2\,dx\\
&\leq C(q)\int_{\Omega_2}\frac{\theta^\beta(\theta-2\Theta)_+^{q-2}\zeta_x^2}{v}\,dx
 +C(q)\int\theta^{\beta-2}\zeta_x^2\,dx
 +C(q,M)\int\zeta^2\Theta_x^2\,dx.
\end{align*}
The velocity term is bounded by
\begin{align*}
\int_0^T\|\psi_x\|_2^{2(\beta+q+1)/(\beta+2)}dt
&\leq\left(\sup_t\|\psi_x(t)\|_2^2\right)^{(q-1)/(\beta+2)}
 \int_0^T\|\psi_x\|_2^2dt\leq C(q).
\end{align*}
The terms in \eqref{int1} containing $\Theta_x$, $\Theta_t$, or $U_x$
have absolute value at most
\[
C(q,M)\int_0^t\!\int\bigl\{
\zeta^2(\Theta_x^2+|\Theta_t|+|U_x|+U_x^2)
+U_x^2\psi_x^2\bigr\}\,dxds\leq\delta^rC(q,M).
\]
Choosing $\varepsilon_1>0$ and then $\varepsilon_2>0$ sufficiently small yields
\begin{align}
&\sup_{0\leq t\leq T}\int(\theta-2\Theta)_+^q\,dx
 +\int_0^T\!\int\theta^{\beta+q-2}\zeta_x^2\,dxdt
 \leq C(q)+\delta^rC(q,M),
 \qquad q>\beta+1,\quad q\geq2.
\label{int-t+}
\end{align}
Indeed,
\begin{align*}
\int\theta^{\beta+q-2}\zeta_x^2\,dx
&\leq C(q)\int_{\Omega_2}
 \frac{\theta^\beta(\theta-2\Theta)_+^{q-2}\zeta_x^2}{v}\,dx
 +C(q)\int\theta^{\beta-2}\zeta_x^2\,dx.
\end{align*}
The initial integral satisfies
\begin{align*}
\int(\theta_0-2\Theta(x,0))_+^q\,dx
&\leq\|(\theta_0-2\Theta(x,0))_+\|_\infty^{q-2}
 \int(\theta_0-2\Theta(x,0))_+^2\,dx\leq C(q).
\end{align*}
Taking $q=\beta+2$ and $q=\beta+4$ in \eqref{int-t+}, and choosing
$\delta^r[C(\beta+2,M)+C(\beta+4,M)]\leq1$, gives
\begin{equation}\label{int-t+-choices}
\int_0^T\!\int\bigl(\theta^{2\beta}+\theta^{2\beta+2}\bigr)
\zeta_x^2\,dxdt\leq C_0.
\end{equation}

Multiplying \eqref{perturb}$_3$ by $\theta^\beta\zeta_t$, we have
\begin{align}
c_{\nu}\int\theta^\beta\zeta_t^2\,dx
&=\tilde\kappa\int\left(\frac{\theta^\beta\zeta_x}{v}\right)_x
\theta^\beta\zeta_t\,dx
+\tilde\kappa\int\left[\left(\frac{\theta^\beta}{v}
-\frac{\Theta^\beta}{V}\right)\Theta_x\right]_x\theta^\beta\zeta_t\,dx
\notag\\
&\quad+\int\left(\frac{\mu u_x^2}{v}
-\frac{\mu_1 U_x^2}{V}\right)\theta^\beta\zeta_t\,dx
-\int(pu_x-p_+U_x)\theta^\beta\zeta_t\,dx\notag\\
&\quad-\int\widetilde R_2\theta^\beta\zeta_t\,dx
=\sum_{i=1}^{5}K_i.
\label{supthet}
\end{align}
For $K_1$,
\begin{align}
K_1
&=-\tilde\kappa\int\frac{\theta^\beta\zeta_x}{v}
\left\{(\theta^\beta\zeta_x)_t
+\beta\theta^{\beta-1}(\Theta_x\zeta_t-\Theta_t\zeta_x)\right\}\,dx
\notag\\
&=-\frac{\tilde\kappa}{2}\frac d{dt}
\int\frac{\theta^{2\beta}\zeta_x^2}{v}\,dx
-\frac{\tilde\kappa}{2}\int\frac{(\psi_x+U_x)\theta^{2\beta}\zeta_x^2}{v^2}\,dx
\notag\\
&\quad-\beta\tilde\kappa\int\frac{\theta^{2\beta-1}\zeta_x}{v}
(\Theta_x\zeta_t-\Theta_t\zeta_x)\,dx.
\label{k11}
\end{align}
The last two terms satisfy
\begin{align}
&\left|\frac{\tilde\kappa}{2}\int
\frac{(\psi_x+U_x)\theta^{2\beta}\zeta_x^2}{v^2}\,dx\right|
\leq C\|\psi_x\|_\infty\int\frac{\theta^{2\beta}\zeta_x^2}{v}\,dx
+C(M)\|U_x\|_\infty\|\zeta_x\|_2^2,\notag\\
&\beta\tilde\kappa\left|\int\frac{\theta^{2\beta-1}\zeta_x}{v}
(\Theta_x\zeta_t-\Theta_t\zeta_x)\,dx\right|\notag\\
&\qquad\leq\frac{c_{\nu}}{16}\int\theta^\beta\zeta_t^2\,dx
+C(M)\int(\Theta_x^2+|\Theta_t|)\zeta_x^2\,dx.
\label{k12}
\end{align}
For $K_2$,
\begin{align}
&\left[\left(\frac{\theta^\beta}{v}-\frac{\Theta^\beta}{V}\right)\Theta_x\right]_x
\notag\\
&\quad=\frac{\beta\theta^{\beta-1}}v\zeta_x\Theta_x
-\frac{\theta^\beta}{v^2}\phi_x\Theta_x
+\beta\left(\frac{\theta^{\beta-1}}v-\frac{\Theta^{\beta-1}}V\right)\Theta_x^2
\notag\\
&\qquad-\left(\frac{\theta^\beta}{v^2}-\frac{\Theta^\beta}{V^2}\right)V_x\Theta_x
+\left(\frac{\theta^\beta}{v}-\frac{\Theta^\beta}{V}\right)\Theta_{xx},\notag
\end{align}
and therefore
\begin{align}
|K_2|&\leq\frac{c_{\nu}}{16}\int\theta^\beta\zeta_t^2\,dx
+C(M)\int\left\{(\phi_x^2+\zeta_x^2)\Theta_x^2
+(\phi^2+\zeta^2)(\Theta_{xx}^2+\Theta_x^4+V_x^2\Theta_x^2)\right\}\,dx.
\label{k2}
\end{align}
Since $\mu \leq2\tilde\mu$,
\begin{align}
|K_3|&=\left|\int\left\{\frac{\mu }v(\psi_x^2+2\psi_xU_x)
+\left(\frac{\mu }v-\frac{\mu_1 }V\right)U_x^2\right\}
\theta^\beta\zeta_t\,dx\right|\notag\\
&\leq\frac{c_{\nu}}8\int\theta^\beta\zeta_t^2\,dx
+C\int\theta^\beta\psi_x^4\,dx
+C(M)\int\left\{\psi_x^2U_x^2+(\phi^2+\zeta^2)U_x^4\right\}\,dx.
\label{k3}
\end{align}
For $\beta>0$, by \eqref{cedu} and \eqref{zetax-f},
\begin{align*}
\int\theta^\beta\psi_x^4\,dx
&\leq\left[C+\|(\theta^\beta-2\Theta^\beta)_+\|_\infty\right]
                  \|\psi_x\|_\infty^2\|\psi_x\|_2^2\\
&\leq C\left[1+
 \int_{\{\theta>2^{1/\beta}\Theta\}}
 \bigl|\beta\theta^{\beta-1}\zeta_x
       +\beta(\theta^{\beta-1}-2\Theta^{\beta-1})\Theta_x\bigr|\,dx\right]
                  \|\psi_x\|_\infty^2\|\psi_x\|_2^2\\
&\leq C\left[1+
 \left(\int_{\{\theta>2^{1/\beta}\Theta\}}
                     \theta^{2\beta-2}\zeta_x^2\,dx\right)^{1/2}
 +C(M)\left(\int\zeta^2\Theta_x^2\,dx\right)^{1/2}\right]
                  \|\psi_x\|_\infty^2\|\psi_x\|_2^2\\
&\leq C\int(\theta^{-1}+\theta^{2\beta-1})\zeta_x^2\,dx
 +C\|\psi_x\|_2^2\|\psi_{xx}\|_2^2
 +C\|\psi_x\|_2\|\psi_{xx}\|_2
 +C(M)\int\zeta^2\Theta_x^2\,dx\\
&\leq C\int(\theta^{-1}+\theta^{2\beta})\zeta_x^2\,dx
 +C\bigl(\|\psi_x\|_2^2+\|\psi_{xx}\|_2^2\bigr)
 +C(M)\int\zeta^2\Theta_x^2\,dx.
\end{align*}
For $\beta=0$, directly,
\[
\int\psi_x^4\,dx
\leq\|\psi_x\|_\infty^2\|\psi_x\|_2^2
\leq2\|\psi_x\|_2^3\|\psi_{xx}\|_2
\leq C_0\bigl(\|\psi_x\|_2^2+\|\psi_{xx}\|_2^2\bigr).
\]
Similarly,
\begin{align}
|K_4|&\leq\left|\int\frac{R\theta^{\beta+1}}v\psi_x\zeta_t\,dx\right|
+\left|\int(p-p_+)U_x\theta^\beta\zeta_t\,dx\right|\notag\\
&\leq\frac{c_{\nu}}8\int\theta^\beta\zeta_t^2\,dx
+C\int\theta^{\beta+2}\psi_x^2\,dx
+C(M)\int(\phi^2+\zeta^2)U_x^2\,dx,
\label{k4}
\end{align}
where
\begin{align*}
\int\theta^{\beta+2}\psi_x^2\,dx
&\leq\left[C+\|(\theta^{\beta+2}-2\Theta^{\beta+2})_+\|_\infty\right]
                        \|\psi_x\|_2^2\\
&\leq C\left[1+\int_{\{\theta>2^{1/(\beta+2)}\Theta\}}
 \left|\theta^{\beta+1}\zeta_x
 +(\theta^{\beta+1}-2\Theta^{\beta+1})\Theta_x\right|\,dx\right]
                        \|\psi_x\|_2^2\\
&\leq C\left[1+
 \left(\int\theta^{2\beta+2}\zeta_x^2\,dx\right)^{1/2}
 +C(M)\int|\Theta_x|\,dx\right]\|\psi_x\|_2^2\\
&\leq C\left[1+\int\theta^{2\beta+2}\zeta_x^2\,dx\right]
                        \|\psi_x\|_2^2
 +\delta C(M)\|\psi_x\|_2^2.
\end{align*}
Finally,
\begin{equation}\label{k5}
|K_5|\leq\frac{c_{\nu}}{16}\int\theta^\beta\zeta_t^2\,dx
+C(M)\int\widetilde R_2^2\,dx.
\end{equation}
It follows from \eqref{supthet}--\eqref{k5} that
\begin{align}
&\frac d{dt}\int\frac{\theta^{2\beta}\zeta_x^2}{v}\,dx
+c\int\theta^\beta\zeta_t^2\,dx\notag\\
&\quad\leq C\|\psi_x\|_\infty\int\frac{\theta^{2\beta}\zeta_x^2}{v}\,dx
+C\int\bigl(\theta^{\beta+2}\psi_x^2+\theta^\beta\psi_x^4\bigr)\,dx
+C(M)\int|\Theta_t|\zeta_x^2\,dx\notag\\
&\qquad+C(M)\|U_x\|_\infty\|\zeta_x\|_2^2
+C(M)\int\left\{\widetilde R_2^2
+(\phi_x^2+\zeta_x^2)\Theta_x^2+\psi_x^2U_x^2\right.\notag\\
&\hspace{40mm}\left.+(\phi^2+\zeta^2)
(U_x^2+U_x^4+\Theta_{xx}^2+\Theta_x^4+V_x^2\Theta_x^2)\right\}\,dx.
\label{thermal-gradient-differential}
\end{align}
By \eqref{profile:first-supremum} and \eqref{zetax-f},
\begin{align*}
C(M)\int_0^T\!\int(\Theta_x^2+|\Theta_t|)\zeta_x^2\,dxdt
&\leq C(M)\left(\sup_t\|\Theta_x(t)\|_\infty^2
+\sup_t\|\Theta_t(t)\|_\infty\right)
\int_0^T\|\zeta_x(t)\|_2^2dt\\
&\leq C(M)(\delta^2+\delta)\leq\delta^rC(M),
\qquad0\leq\delta\leq1,\quad0<r\leq1.
\end{align*}
Combining \eqref{k3}--\eqref{k4}, \eqref{zetax-f},
\eqref{int-t+-choices}, and \eqref{profile:first-supremum}, we obtain
\begin{align*}
&\int_0^T\!\int\bigl(\theta^\beta\psi_x^4
                         +\theta^{\beta+2}\psi_x^2\bigr)\,dxdt\\
&\quad\leq C\int_0^T\!\int(\theta^{-1}+\theta^{2\beta})\zeta_x^2\,dxdt
 +C\int_0^T(\|\psi_x\|_2^2+\|\psi_{xx}\|_2^2)dt\\
&\qquad+C\sup_t\|\psi_x(t)\|_2^2
           \int_0^T\!\int\theta^{2\beta+2}\zeta_x^2\,dxdt
 +\delta^rC(M)\leq C_0.
\end{align*}
Moreover,
\[
\int_0^T\|\psi_x\|_\infty^2dt
\leq2\int_0^T\|\psi_x\|_2\|\psi_{xx}\|_2dt\leq C_0.
\]
Using
\[
\|\psi_x\|_\infty\int\frac{\theta^{2\beta}\zeta_x^2}{v}\,dx
\leq C\left(\int\frac{\theta^{2\beta}\zeta_x^2}{v}\,dx\right)^2
+C\|\psi_x\|_\infty^2
\]
in \eqref{thermal-gradient-differential}, we obtain by Gronwall's inequality
\begin{align}
&\sup_{0\leq s\leq t}\int\frac{\theta^{2\beta}\zeta_x^2}{v}(x,s)\,dx
+c\int_0^t\!\int\theta^\beta\zeta_t^2\,dxds\notag\\
&\quad\leq\left[C_0+C\int\frac{\theta_0^{2\beta}\zeta_{0x}^2}{v_0}\,dx
+\alpha C(M)+\delta^rC(M)\right]
\exp\left(C\int_0^t\!\int\frac{\theta^{2\beta}\zeta_x^2}{v}\,dxds\right)
\leq C_0.
\label{sup-tx}
\end{align}
For $\theta(x,t)>2\Theta(x,t)$,
\begin{align}
(\theta(x,t)-2\Theta(x,t))_+
&\leq\int_{\Omega_2}(|\zeta_x|+|\Theta_x|)\,dx\notag\\
&\leq\left(\int\theta^{2\beta}\zeta_x^2\,dx\right)^{1/2}
\left(\int_{\Omega_2}\theta^{-2\beta}\,dx\right)^{1/2}
+\int|\Theta_x|\,dx\notag\\
&\leq C\left(\int\theta^{2\beta}\zeta_x^2\,dx\right)^{1/2}
\left(\int_{\Omega_2}\theta\,dx\right)^{1/2}+C\delta
\leq C_0.
\label{the-up}
\end{align}
Hence
\begin{equation}\label{theta-up}
\theta(x,t)\leq C_0.
\end{equation}

By \eqref{basic}, \eqref{high-deri} and \eqref{theta-up},
\begin{align*}
\int\zeta^6\,dx
&\leq C\left(\int\zeta^2\,dx\right)^2\int\zeta_x^2\,dx
 \leq C_0\int\zeta_x^2\,dx.
\end{align*}
Integrating in time gives
\begin{equation}
\int_0^T\!\int\zeta^6\,dxdt\leq C_0.
\label{dt-int}
\end{equation}
Sobolev's inequality and \eqref{sup-tx} give
\begin{align*}
\|\theta^{\beta+1}-\Theta^{\beta+1}\|_\infty^2
&\leq C\left(\int|\theta^{\beta+1}-\Theta^{\beta+1}|^6\,dx\right)^{1/4}
 \left(\int|(\theta^{\beta+1}-\Theta^{\beta+1})_x|^2\,dx\right)^{1/4}\\
&\leq C\left(\int\zeta^6\,dx\right)^{1/4}
 \left[\int\theta^{2\beta}\zeta_x^2\,dx
      +\int\Theta_x^2|\theta^\beta-\Theta^\beta|^2\,dx\right]^{1/4}\\
&\leq C_0\left(\int\zeta^6\,dx\right)^{1/4}.
\end{align*}
Choose $T_0\geq1$ so large that
\[
C_0(C_0/T_0)^{1/4}
\leq\frac14\min\{\theta_-,\theta_+\}^{2\beta+2}.
\]
For every $T_0\leq t\leq T$, some $s\in[t-T_0,t]$ satisfies
\[
\int\zeta^6(x,s)\,dx
\leq\frac1{T_0}\int_{t-T_0}^t\!\int\zeta^6\,dxdr
 \leq\frac{C_0}{T_0}.
\]
For $s\in[t-T_0,t]$ satisfying
$\int\zeta^6(x,s)\,dx\leq C_0/T_0$, the Sobolev estimate for
$\theta^{\beta+1}-\Theta^{\beta+1}$ and the choice of $T_0$ yield
\begin{align*}
\theta^{\beta+1}(x,s)
&\geq\Theta^{\beta+1}(x,s)
 -\|\theta^{\beta+1}(s)-\Theta^{\beta+1}(s)\|_\infty
  \geq\frac12\min\{\theta_-,\theta_+\}^{\beta+1}.
\end{align*}
Thus $\inf_x\theta(x,s)\geq2^{-1/(\beta+1)}
\min\{\theta_-,\theta_+\}\geq c>0$.
By \eqref{ener},
\begin{align*}
c_{\nu}(\theta^{-1})_t
 -\left(\frac{\kappa }v(\theta^{-1})_x\right)_x
&=-\frac{2\kappa \theta_x^2}{v\theta^3}
 +\frac{Ru_x}{v\theta}-\frac{\mu u_x^2}{v\theta^2}\\
&\leq\frac{R^2}{4\mu v}\leq C_0.
\end{align*}
By \eqref{basic},
\[
\int_{\theta<c/2}dx
\leq C_0\int\Phi(\theta/\Theta)\,dx\leq C_0.
\]
For $p>2$, multiplication by $(\theta^{-1}-2/c)_+^p$ gives
\begin{align*}
&\frac{c_{\nu}}{p+1}\frac d{dt}
 \int(\theta^{-1}-2/c)_+^{p+1}\,dx
 +p\int\frac{\kappa }v
 (\theta^{-1}-2/c)_+^{p-1}|(\theta^{-1})_x|^2\,dx\\
&\quad\leq C_0\int(\theta^{-1}-2/c)_+^p\,dx
  \leq C_0C_0^{1/(p+1)}\|(\theta^{-1}-2/c)_+\|_{p+1}^p.
\end{align*}
Since $\|(\theta^{-1}(s)-2/c)_+\|_{p+1}=0$, integration yields
\[
\|(\theta^{-1}(t)-2/c)_+\|_{p+1}
\leq C_0C_0^{1/(p+1)}(t-s).
\]
Letting $p\to\infty$, we obtain
\[
\|(\theta^{-1}(t)-2/c)_+\|_\infty
\leq\liminf_{p\to\infty}\|(\theta^{-1}(t)-2/c)_+\|_{p+1}
\leq C_0(t-s)\leq C_0T_0.
\]
The $L^\infty$ bound for $(\theta^{-1}-2/c)_+$ yields
\begin{equation}\label{t0-infty}
\theta(x,t)\geq(2/c+C_0T_0)^{-1},
\qquad T_0\leq t\leq T.
\end{equation}
For $0\leq t\leq\min\{T,T_0\}$, multiply the differential inequality for
$\theta^{-1}$ by $[\theta^{-1}-2/\min\{\underline\theta_0,c\}]_+^p$ and
repeat the calculation for $(\theta^{-1}-2/c)_+^p$ with $2/c$ replaced by
$2/\min\{\underline\theta_0,c\}$.
Since its value at $t=0$ is zero, letting $p\to\infty$ yields
\begin{equation}\label{0-t0}
\theta(x,t)\geq
\left(\frac2{\min\{\underline\theta_0,c\}}+C_0T_0\right)^{-1},
\qquad0\leq t\leq\min\{T,T_0\}.
\end{equation}
Combining \eqref{theta-up}, \eqref{t0-infty}, and \eqref{0-t0}
establishes \eqref{theta-fina}.
\end{proof}

\begin{lemma}\label{txxtt}
Under the assumptions of Lemma~\ref{high-order},
\begin{equation}\label{zetag}
\sup_{0\leq t\leq T}\int\zeta_x^2\,dx
+\int_0^T\!\int(\zeta_t^2+\zeta_{xx}^2)\,dxdt\leq C_0.
\end{equation}
\end{lemma}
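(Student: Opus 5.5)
The argument upgrades what is already in hand, using the two-sided temperature bound \eqref{theta-fina}. From \eqref{sup-tx} we already have $\sup_t\int\theta^{2\beta}\zeta_x^2/v\,dx\le C_0$ and $\int_0^T\!\int\theta^\beta\zeta_t^2\le C_0$. Since \eqref{v} and \eqref{theta-fina} give $C_0^{-1}\le v,\theta\le C_0$, the weights $\theta^{2\beta}/v$ and $\theta^\beta$ are bounded above and below by constants independent of $M$ and $T$. Hence
\[
\sup_{0\le t\le T}\|\zeta_x(t)\|_2^2+\int_0^T\|\zeta_t\|_2^2dt\le C_0 .
\]
One check is needed here: the constants in \eqref{sup-tx} carried $\alpha C(M)+\delta^rC(M)$, and these are absorbed into $C_0$ under the smallness hypothesis of Lemma~\ref{high-order}. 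So this part is immediate.

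For $\zeta_{xx}$, I solve the temperature equation \eqref{perturb}$_3$ for the highest-order term. Expanding
\[
\tilde\kappa\left(\frac{\theta^\beta\theta_x}{v}\right)_x=\frac{\tilde\kappa\theta^\beta}{v}\zeta_{xx}+\frac{\tilde\kappa\beta\theta^{\beta-1}}{v}\theta_x^2-\frac{\tilde\kappa\theta^\beta}{v^2}v_x\theta_x+\frac{\tilde\kappa\theta^\beta}{v}\Theta_{xx},
\]
the coefficient $\tilde\kappa\theta^\beta/v$ is bounded below by a constant. This gives pointwise
\[
|\zeta_{xx}|\le C\Bigl(|\zeta_t|+|\Theta_t|+|\psi_x|+|U_x|+\zeta_x^2+\Theta_x^2+|\phi_x|(|\zeta_x|+|\Theta_x|)+|V_x\theta_x|+|\Theta_{xx}|+\psi_x^2+U_x^2+|\widetilde R_2|\Bigr),
\]
and I then square and integrate over $\mathbb R\times(0,T)$. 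The linear terms are controlled by the bounds just obtained, by \eqref{zetax-f}, and by Lemma~\ref{lem:remaining-profile-integrals} (profile terms like $\Theta_{xx}^2$, $U_x^2$, $\widetilde R_2^2$ are integrable). The term $\psi_x^4$ is handled by $\int_0^T\|\psi_x\|_4^4\le\sup_t\|\psi_x\|_2^2\int_0^T\|\psi_x\|_\infty^2\le C_0$, using \eqref{zetax-f}.

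The main obstacle is the quadratic terms $\zeta_x^4$ and $\phi_x^2\zeta_x^2$, because $\phi_x$ is only in $L^\infty_tL^2_x$. For these I use Gagliardo--Nirenberg:
\[
\int\phi_x^2\zeta_x^2\le\|\phi_x\|_2^2\|\zeta_x\|_\infty^2\le C_0\|\zeta_x\|_2\|\zeta_{xx}\|_2\le\varepsilon\|\zeta_{xx}\|_2^2+C_\varepsilon\|\zeta_x\|_2^2,
\]
and similarly
\[
\int\zeta_x^4\le\|\zeta_x\|_2^3\|\zeta_{xx}\|_2\le\varepsilon\|\zeta_{xx}\|_2^2+C_\varepsilon\|\zeta_x\|_2^2 .
\]
Both use the uniform bounds on $\|\phi_x\|_2$ from \eqref{phix} and on $\|\zeta_x\|_2$ from the first step. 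Choosing $\varepsilon$ small absorbs these into the left side, and $\int_0^T\|\zeta_x\|_2^2\le C_0$ by \eqref{zetax-f}. The cross terms $\phi_x\Theta_x$ and $V_x\theta_x$ are bounded by $\delta$ times $\int_0^T\!\int(\phi_x^2+\zeta_x^2)$, which is already controlled.

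The resulting bound would be circular if $\int_0^T\|\zeta_{xx}\|_2^2$ were not known to be finite. This is not a problem, since the local class \eqref{local0:class} gives $\zeta\in L^2(0,T;H^2)$. Collecting the estimates yields \eqref{zetag} with $C_0$ independent of $M$ and $T$.
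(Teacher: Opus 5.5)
Your proposal is correct and is essentially the paper's argument. The first step, reading off $\sup_t\|\zeta_x\|_2^2+\int_0^T\|\zeta_t\|_2^2\,dt\le C_0$ from \eqref{sup-tx} and \eqref{theta-fina}, is identical. The second step likewise solves the temperature equation for its second-order term, squares it, and absorbs the nonlinear products by one-dimensional interpolation.

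The only difference is in that second step. The paper first estimates the flux derivative $(\theta^\beta\zeta_x)_x$, so only $\theta^\beta\zeta_xv_x/v$ has to be absorbed, and then recovers $\zeta_{xx}$ using $\int_0^T\|\zeta_x\|_\infty^2\,dt\le C_0$. You instead expand fully and absorb $\zeta_x^4$ and $\phi_x^2\zeta_x^2$ at once, which is an inessential variation.
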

\begin{proof}
By \eqref{theta-fina} and \eqref{sup-tx},
\begin{equation}\label{t-linfl2}
\sup_{0\leq t\leq T}\int\zeta_x^2\,dx
+\int_0^T\!\int\zeta_t^2\,dxdt\leq C_0.
\end{equation}
Equation \eqref{perturb}$_3$ gives
\begin{align*}
\tilde\kappa(\theta^\beta\zeta_x)_x
&=c_{\nu}v\zeta_t+\tilde\kappa\frac{\theta^\beta\zeta_xv_x}{v}
-\tilde\kappa v\left[\left(\frac{\theta^\beta}{v}
-\frac{\Theta^\beta}{V}\right)\Theta_x\right]_x\\
&\quad+vp\psi_x+v(p-p_+)U_x
-\mu u_x^2+\frac{v\mu_1 }V U_x^2+v\widetilde R_2.
\end{align*}
By \eqref{high-deri}, \eqref{theta-fina} and \eqref{t-linfl2},
\begin{align}
\int_0^T\|(\theta^\beta\zeta_x)_x\|_2^2dt
&\leq C_0+C\int_0^T\!\int\theta^{2\beta}\zeta_x^2v_x^2\,dxdt\notag\\
&\leq C_0+C\sup_t\|v_x(t)\|_2^2
\int_0^T\|\theta^\beta\zeta_x\|_\infty^2dt\notag\\
&\leq C_0+C\int_0^T\|\theta^\beta\zeta_x\|_2
\|(\theta^\beta\zeta_x)_x\|_2dt\notag\\
&\leq\frac12\int_0^T\|(\theta^\beta\zeta_x)_x\|_2^2dt
+C\int_0^T\!\int\theta^{2\beta}\zeta_x^2\,dxdt+C_0.
\label{L3.911}
\end{align}
Absorbing the first term on the right-hand side of \eqref{L3.911} into
its left-hand side yields
\begin{equation}\label{L3.92}
\int_0^T\|(\theta^\beta\zeta_x)_x\|_2^2dt
+\int_0^T\|\zeta_x\|_\infty^2dt\leq C_0.
\end{equation}
Finally,
\begin{align*}
\int_0^T\!\int\zeta_{xx}^2\,dxdt
&=\int_0^T\!\int\left\{\theta^{-\beta}(\theta^\beta\zeta_x)_x
-\beta\theta^{-1}\theta_x\zeta_x\right\}^2\,dxdt\\
&\leq C_0+C\int_0^T\!\int\theta_x^2\zeta_x^2\,dxdt\\
&\leq C_0+C\sup_t\|\zeta_x(t)\|_2^2\int_0^T\|\zeta_x\|_\infty^2dt
+C\int_0^T\|\Theta_x\|_\infty^2\|\zeta_x\|_2^2dt\leq C_0.
\end{align*}
The displayed $\zeta_{xx}$ estimate together with \eqref{t-linfl2}
establishes \eqref{zetag}.
\end{proof}

\begin{proof}[Proof of Lemma~\ref{lemma5}]
For $T<\infty$,
\[
\int_0^T\!\int(\phi^2+\psi^2+\zeta^2)w^2\,dxdt
\le\sup_{0\le t\le T}\|(\phi,\psi,\zeta)(t)\|_2^2
                  \int_0^T\frac{dt}{1+t}
\le C_0\log(1+T).
\]
For $g$ in \eqref{w}, $g_x=w$, $g_t=2g_{xx}/c_1$ and
\[
\partial_x\left[\frac{g(\sqrt2x,t)}{\sqrt{2(1+t)}}\right]=w^2.
\]
Moreover,
\[
\left\|\frac{g(\sqrt2\,\cdot,t)}{\sqrt{2(1+t)}}\right\|_\infty
\le C(1+t)^{-1/2},\qquad
\left\|\partial_t\left[\frac{g(\sqrt2\,\cdot,t)}{\sqrt{2(1+t)}}\right]\right\|_\infty
\le C(1+t)^{-3/2}.
\]
Since
\[
R\log(v/V)+c_\nu\log(\theta/\Theta)
 =(R+c_\nu)\log(v/V)+c_\nu\log(p/p_+),
\]
we have
\begin{align*}
\phi
 &=V\left[\exp\left(
 \frac{R\log(v/V)+c_\nu\log(\theta/\Theta)-c_\nu\log(p/p_+)}{R+c_\nu}
                          \right)-1\right],\\
\zeta&=\Theta\left[\frac pP\left(1+\frac\phi V\right)-1\right].
\end{align*}
Thus
\[
|\phi|+|\zeta|
 \le C\left(|R\log(v/V)+c_\nu\log(\theta/\Theta)|+|p-p_+|\right).
\]
Differentiation gives
\begin{align*}
\partial_x[R\log(v/V)+c_\nu\log(\theta/\Theta)]
 &=R\left(\frac{\phi_x}v-\frac{\phi V_x}{vV}\right)
       +c_\nu\left(\frac{\zeta_x}\theta-\frac{\zeta\Theta_x}{\theta\Theta}\right).
\end{align*}
Subtracting the entropy equation satisfied by $(V,U,\Theta)$ from the
entropy equation for $(v,u,\theta)$ gives
\begin{equation}\label{relative-entropy-mode}
\begin{aligned}
\partial_t\left[R\log(v/V)+c_{\nu}\log(\theta/\Theta)\right]
={}&\partial_x\left[
\frac{\kappa\theta_x}{v\theta}
-\frac{\kappa_1\Theta_x}{V\Theta}\right]\\
&+\frac{\kappa\theta_x^2}{v\theta^2}
-\frac{\kappa_1\Theta_x^2}{V\Theta^2}
+\frac{\mu u_x^2}{v\theta}
-\frac{\mu_1U_x^2}{V\Theta}
-\frac{\widetilde R_2}{\Theta}.
\end{aligned}
\end{equation}
By the upper and lower bounds of $v$ and $\theta$,
\begin{align*}
\frac{\kappa\theta_x}{v\theta}
 -\frac{\kappa_1\Theta_x}{V\Theta}
&=\frac{\kappa}{v\theta}\zeta_x
 +\left(\frac{\kappa}{v\theta}
                  -\frac{\kappa_1}{V\Theta}\right)\Theta_x.
\end{align*}
Hence
\begin{align*}
\iint\left|\frac{\kappa\theta_x}{v\theta}
 -\frac{\kappa_1\Theta_x}{V\Theta}\right|^2
&\le C\iint\zeta_x^2+C\iint(\phi^2+\zeta^2)\Theta_x^2\\
&\le C+C\delta^2\iint g_x^2(\phi^2+\zeta^2).
\end{align*}
The identities for $\partial_x\log(v/V)$ and
$\partial_x\log(\theta/\Theta)$, together with the positive upper and
lower bounds for $v$, $\theta$, $V$, and $\Theta$, yield
\begin{align*}
\iint\left|\partial_x[R\log(v/V)+c_\nu\log(\theta/\Theta)]\right|^2
&\le C\iint(\phi_x^2+\zeta_x^2)
       +C\iint(\phi^2+\zeta^2)(V_x^2+\Theta_x^2)\\
&\le C+C\delta^2\iint g_x^2(\phi^2+\zeta^2).
\end{align*}
Multiplying \eqref{relative-entropy-mode} by
$[R\log(v/V)+c_\nu\log(\theta/\Theta)]g^2$ and integrating gives
\begin{align*}
&\int_0^T\left\langle
 \partial_t[R\log(v/V)+c_\nu\log(\theta/\Theta)],
 [R\log(v/V)+c_\nu\log(\theta/\Theta)]g^2\right\rangle dt\\
&=-\iint\left[\frac{\kappa\theta_x}{v\theta}
                    -\frac{\kappa_1\Theta_x}{V\Theta}\right]
       g^2\partial_x[R\log(v/V)+c_\nu\log(\theta/\Theta)]\\
&\quad-2\iint\left[\frac{\kappa\theta_x}{v\theta}
                    -\frac{\kappa_1\Theta_x}{V\Theta}\right]
       gg_x[R\log(v/V)+c_\nu\log(\theta/\Theta)]\\
&\quad+\iint\left\{\frac{\kappa}{v\theta^2}
                                    (\zeta_x^2+2\zeta_x\Theta_x)
 +\left[\frac{\kappa}{v\theta^2}
                    -\frac{\kappa_1}{V\Theta^2}\right]\Theta_x^2\right\}
             [R\log(v/V)+c_\nu\log(\theta/\Theta)]g^2\\
&\quad+\iint\left\{\frac{\mu}{v\theta}
                                    (\psi_x^2+2\psi_xU_x)
 +\left[\frac{\mu}{v\theta}
                    -\frac{\mu_1}{V\Theta}\right]U_x^2\right\}
             [R\log(v/V)+c_\nu\log(\theta/\Theta)]g^2\\
&\quad-\iint\frac{\widetilde R_2}{\Theta}
             [R\log(v/V)+c_\nu\log(\theta/\Theta)]g^2\\
&\le\eta\iint[R\log(v/V)+c_\nu\log(\theta/\Theta)]^2g_x^2
       +C_\eta\iint(\phi_x^2+\psi_x^2+\zeta_x^2)\\
&\quad+C_\eta\iint(\phi^2+\zeta^2)(V_x^2+U_x^2+\Theta_x^2)\\
&\quad+C\left(\int_0^T\|\widetilde R_2\|_1^{4/3}dt\right)^{3/4}
       \left(\int_0^T\|R\log(v/V)+c_\nu\log(\theta/\Theta)\|_\infty^4dt\right)^{1/4}.
\end{align*}
Furthermore,
\begin{align*}
&\int_0^T\|R\log(v/V)+c_\nu\log(\theta/\Theta)\|_\infty^4dt\\
&\quad\le C\sup_t\|R\log(v/V)+c_\nu\log(\theta/\Theta)\|_2^2
 \iint\left|\partial_x[R\log(v/V)+c_\nu\log(\theta/\Theta)]\right|^2\\
&\quad\le C+C\delta^2\iint g_x^2(\phi^2+\zeta^2).
\end{align*}
Taking $\eta$ small in \eqref{scalar-gaussian} yields
\begin{equation}\label{entropy-gaussian}
\int_0^T\!\int
[R\log(v/V)+c_{\nu}\log(\theta/\Theta)]^2g_x^2
\leq C+C\delta^2\int_0^T\!\int
g_x^2(\phi^2+\psi^2+\zeta^2).
\end{equation}

The mass and temperature equations give
\begin{equation}\label{pressure-difference}
\begin{aligned}
(p-p_+)_t+\frac{\gamma p}{v}\psi_x
={}&\partial_x\left\{\frac{\gamma-1}{v}
\left[\frac{\kappa\theta_x}{v}
-\frac{\kappa_1\Theta_x}{V}\right]\right\}\\
&+\frac{\gamma-1}{v^2}v_x
\left[\frac{\kappa\theta_x}{v}
-\frac{\kappa_1\Theta_x}{V}\right]
 +(\gamma-1)(v^{-1}-V^{-1})
 \left(\frac{\kappa_1\Theta_x}{V}\right)_x\\
&+(\gamma-1)\left(\frac{\mu u_x^2}{v^2}
 -\frac{\mu_1U_x^2}{V^2}\right)
 -\gamma\left(\frac p v-\frac {p_+} V\right)U_x
 -\frac{\gamma-1}{V}\widetilde R_2.
\end{aligned}
\end{equation}
The momentum equation gives
\begin{equation}\label{momentum-difference}
\psi_t+(p-p_+)_x
=\partial_x\left[\frac{\mu u_x}{v}
-\frac{\mu_1U_x}{V}\right]-\widetilde R_1.
\end{equation}
Multiply \eqref{pressure-difference} by $\psi\frac{g(\sqrt2x,t)}{\sqrt{2(1+t)}}$ and
\eqref{momentum-difference} by $(p-p_+)\frac{g(\sqrt2x,t)}{\sqrt{2(1+t)}}$. Then
\begin{align*}
&\frac12\iint\left[(p-p_+)^2+\frac{\gamma p}{v}\psi^2\right]g_x^2\\
&=\left[\int(p-p_+)\psi\frac{g(\sqrt2x,t)}{\sqrt{2(1+t)}}\right]_0^T
 -\iint(p-p_+)\psi\partial_t\left[\frac{g(\sqrt2x,t)}{\sqrt{2(1+t)}}\right]
 -\frac12\iint\left(\frac{\gamma p}{v}\right)_x\psi^2\frac{g(\sqrt2x,t)}{\sqrt{2(1+t)}}\\
&\quad+\iint\frac{\gamma-1}{v}
 \left[\frac{\kappa\theta_x}v
                       -\frac{\kappa_1\Theta_x}V\right]
                                  (\psi_x\frac{g(\sqrt2x,t)}{\sqrt{2(1+t)}}+\psi g_x^2)\\
&\quad+\iint\left[\frac{\mu u_x}v
                       -\frac{\mu_1U_x}V\right]
                         [(p-p_+)_x\frac{g(\sqrt2x,t)}{\sqrt{2(1+t)}}+(p-p_+)g_x^2]\\
&\quad-(\gamma-1)\iint\frac{v_x}{v^2}
 \left[\frac{\kappa\theta_x}v
                       -\frac{\kappa_1\Theta_x}V\right]\psi\frac{g(\sqrt2x,t)}{\sqrt{2(1+t)}}\\
&\quad-(\gamma-1)\iint\left(\frac1v-\frac1V\right)
                   \left(\frac{\kappa_1\Theta_x}V\right)_x\psi\frac{g(\sqrt2x,t)}{\sqrt{2(1+t)}}\\
&\quad-(\gamma-1)\iint\left[\frac{\mu}{v^2}
                               (\psi_x^2+2\psi_xU_x)
       +\left(\frac{\mu}{v^2}-\frac{\mu_1}{V^2}\right)U_x^2\right]
                                                      \psi\frac{g(\sqrt2x,t)}{\sqrt{2(1+t)}}\\
&\quad+\gamma\iint\left(\frac p v-\frac {p_+} V\right)U_x\psi\frac{g(\sqrt2x,t)}{\sqrt{2(1+t)}}
       +(\gamma-1)\iint\frac{\widetilde R_2}V\psi\frac{g(\sqrt2x,t)}{\sqrt{2(1+t)}}
       +\iint\widetilde R_1(p-p_+)\frac{g(\sqrt2x,t)}{\sqrt{2(1+t)}}.
\end{align*}
By the hypotheses of Lemma~\ref{lemma5},
\begin{align*}
&\iint\left\{\left|\frac{\kappa\theta_x}v
                    -\frac{\kappa_1\Theta_x}V\right|^2
       +\left|\frac{\mu u_x}v
                    -\frac{\mu_1U_x}V\right|^2+|(p-p_+)_x|^2\right\}\\
&\quad\le C\iint(\phi_x^2+\psi_x^2+\zeta_x^2)
       +C\iint(\phi^2+\zeta^2)(V_x^2+U_x^2+\Theta_x^2)\\
&\quad\le C+C\delta^2\iint g_x^2(\phi^2+\zeta^2).
\end{align*}
Therefore,
\begin{align*}
&\left|\iint\frac{\gamma-1}{v}
 \left[\frac{\kappa\theta_x}v
                    -\frac{\kappa_1\Theta_x}V\right]
                                  (\psi_x\frac{g(\sqrt2x,t)}{\sqrt{2(1+t)}}+\psi g_x^2)\right|\\
&\quad+\left|\iint\left[\frac{\mu u_x}v
                    -\frac{\mu_1U_x}V\right]
                         [(p-p_+)_x\frac{g(\sqrt2x,t)}{\sqrt{2(1+t)}}+(p-p_+)g_x^2]\right|\\
&\quad\le\eta\iint[(p-p_+)^2+\psi^2]g_x^2
                 +C_\eta+C_\eta\delta^2\iint g_x^2(\phi^2+\zeta^2).
\end{align*}
For the term containing $v_x$,
\begin{align*}
&\left|\iint\frac{v_x}{v^2}
 \left[\frac{\kappa\theta_x}v
                    -\frac{\kappa_1\Theta_x}V\right]\psi\frac{g(\sqrt2x,t)}{\sqrt{2(1+t)}}\right|\\
&\quad\le C\iint\left|\frac{\kappa\theta_x}v
                    -\frac{\kappa_1\Theta_x}V\right|^2
        +C\sup_t\|\psi\|_\infty^2\iint\phi_x^2
        +C\iint V_x^2\psi^2\frac{g(\sqrt2x,t)^2}{2(1+t)}\\
&\quad\le C+C\delta^2\iint g_x^2(\phi^2+\psi^2+\zeta^2).
\end{align*}
Also,
\begin{align*}
&\left|\left[\int(p-p_+)\psi\frac{g(\sqrt2x,t)}{\sqrt{2(1+t)}}\right]_0^T\right|
                   +\left|\iint(p-p_+)\psi\partial_t\left[\frac{g(\sqrt2x,t)}{\sqrt{2(1+t)}}\right]\right|\\
&\quad\le C\sup_t\|(\phi,\psi,\zeta)\|_2^2
          \left(1+\int_0^T(1+t)^{-3/2}dt\right)\le C.
\end{align*}
The time-decay estimates for $U_x$ and
$\left(\kappa_1\Theta_x/V\right)_x$ from Lemma~\ref{decay} and
Lemma~\ref{lem:remaining-profile-integrals} give
\begin{align*}
&\iint\left|\left(\frac1v-\frac1V\right)
        \left(\frac{\kappa_1\Theta_x}V\right)_x\psi\frac{g(\sqrt2x,t)}{\sqrt{2(1+t)}}\right|
 +\iint\left|\left(\frac p v-\frac {p_+} V\right)U_x\psi\frac{g(\sqrt2x,t)}{\sqrt{2(1+t)}}\right|\\
&\quad\le C\sup_t\|(\phi,\psi,\zeta)\|_2^2
 \int_0^T\|\frac{g(\sqrt2x,t)}{\sqrt{2(1+t)}}\|_\infty
   \left[\left\|\left(\frac{\kappa_1\Theta_x}V\right)_x\right\|_\infty
                            +\|U_x\|_\infty\right]dt\\
&\quad\le C\int_0^\infty(1+t)^{-3/2}dt\le C.
\end{align*}
For the viscous terms,
\begin{align*}
&\iint\left|\left[\frac{\mu}{v^2}
                      (\psi_x^2+2\psi_xU_x)
       +\left(\frac{\mu}{v^2}-\frac{\mu_1}{V^2}\right)U_x^2\right]
                                                          \psi\frac{g(\sqrt2x,t)}{\sqrt{2(1+t)}}\right|\\
&\quad\le C\sup_t\|\psi\|_\infty\iint\psi_x^2
       +C\iint\psi^2U_x^2
       +C\sup_t\|(\phi,\psi,\zeta)\|_2^2\int_0^T\|U_x\|_\infty^2dt\\
&\quad\le C+C\delta^2\iint g_x^2\psi^2.
\end{align*}
By \eqref{profile:residual-integral},
\begin{align*}
&\iint(|\widetilde R_2\psi|+|\widetilde R_1(p-p_+)|)\frac{g(\sqrt2x,t)}{\sqrt{2(1+t)}}
 \le C\sum_{j=1}^2\left(\int_0^T\|\widetilde R_j\|_1^{4/3}dt\right)^{3/4}
                   \left(\int_0^\infty(1+t)^{-2}dt\right)^{1/4}\le C.
\end{align*}
Finally, since
\[
\left|\left(\frac{\gamma p}{v}\right)_x\right|
\le C(|\phi_x|+|\zeta_x|)
       +C\delta(1+t)^{-1/2}e^{-c x^2/(1+t)},
\]
we obtain
\begin{align*}
&\iint\left|\left(\frac{\gamma p}{v}\right)_x\right|\psi^2\frac{g(\sqrt2x,t)}{\sqrt{2(1+t)}}\\
&\quad\le C\int_0^T\|\frac{g(\sqrt2x,t)}{\sqrt{2(1+t)}}\|_\infty
                     (\|\phi_x\|_2+\|\zeta_x\|_2)\|\psi\|_4^2dt
       +C\delta\iint\psi^2g_x^2+C\\
&\le C\sup_t\|\psi\|_2^{3/2}
       \left[\int_0^T(\|\phi_x\|_2^2+\|\zeta_x\|_2^2)dt\right]^{1/2}
       \left(\int_0^T\|\psi_x\|_2^2dt\right)^{1/4}
       \left(\int_0^\infty(1+t)^{-2}dt\right)^{1/4}\\
&\quad+C\delta\iint\psi^2g_x^2+C
 \le C+C\delta\iint\psi^2g_x^2.
\end{align*}
Taking $\eta$ small in the multiplier identity gives
\[
\iint g_x^2[(p-p_+)^2+\psi^2]
 \le C+C(\delta+\delta^2)\iint g_x^2(\phi^2+\psi^2+\zeta^2).
\]
Since $C^{-1}\le v,\theta\le C$,
\[
c(\phi^2+\zeta^2)
 \le[R\log(v/V)+c_\nu\log(\theta/\Theta)]^2+(p-p_+)^2
 \le C(\phi^2+\zeta^2).
\]
Combining \eqref{entropy-gaussian} with the multiplier estimate for
$g_x^2[(p-p_+)^2+\psi^2]$ and the displayed equivalence between
$(\phi,\zeta)$ and the entropy--pressure variables yields
\[
\iint g_x^2(\phi^2+\psi^2+\zeta^2)
 \le C+C(\delta+\delta^2)\iint g_x^2(\phi^2+\psi^2+\zeta^2).
\]
Choose $\delta$ with $C(\delta+\delta^2)\le1/2$. Then
\[
\frac12\iint g_x^2(\phi^2+\psi^2+\zeta^2)\le C.
\]
\end{proof}

\begin{proof}[Proof of Proposition~\ref{prop}]
Lemmas~\ref{basic-lemma}--\ref{txxtt} and \ref{lemma5} give
$C^{-1}\le v(x,t),\theta(x,t)\le C$ and
\begin{align*}
&\sup_{0\le t\le T}\|(\phi,\psi,\zeta)(t)\|_{H^1}^2
+\int_0^T\left[\|\phi_x\|_2^2+\|(\psi_x,\zeta_x)\|_{H^1}^2
+\|(\psi_t,\zeta_t)\|_2^2\right]dt
+\int_0^T\!\int(\phi^2+\psi^2+\zeta^2)w^2\,dxdt\le C,
\end{align*}
provided $C(M)(\alpha+\delta^r)$ is sufficiently small.
Enlarge $C$ to include the initial bounds, and choose
\[
M\ge4C,\qquad 0<\varepsilon_0,\delta_0\le1,\qquad
\varepsilon_0\log M\le\log2,\qquad
C(M)(\varepsilon_0+\delta_0^r)\le c.
\]
Then
\[
\frac1M<\frac1C\le v,\theta\le C<\frac M2,
\]
and the left-hand side of \eqref{space}, apart from
its pointwise bounds, is at most $M/2$. Thus \eqref{p-1} improves
the a priori hypothesis \eqref{space}.
\end{proof}

The continuation criterion following \eqref{local0:class} and the
improvement of \eqref{space} in \eqref{p-1} extend the local solution to
all $t\geq0$. Letting $T\to\infty$ in \eqref{p-1} gives
\eqref{main-estimate}. Set
\[
f(t)=\|\phi_x(t)\|_2^2+\|\psi_x(t)\|_2^2+\|\zeta_x(t)\|_2^2.
\]
The perturbation equations yield
\[
f'(t)=2\int\phi_x\psi_{xx}-2\int\psi_{xx}\psi_t
      -2\int\zeta_{xx}\zeta_t.
\]
The formula for $f'(t)$ and \eqref{main-estimate} give
\begin{align*}
\int_0^\infty|f'(t)|dt
&\le\int_0^\infty
 \left(\|\phi_x\|_2^2+2\|\psi_{xx}\|_2^2+\|\psi_t\|_2^2
                         +\|\zeta_{xx}\|_2^2+\|\zeta_t\|_2^2\right)dt
\le C.
\end{align*}
The inequalities \eqref{main-estimate} and
$\int_0^\infty|f'(t)|dt\le C$ imply
$f,f'\in L^1(0,\infty)$, and hence
\[
\lim_{t\to\infty}f(t)=0.
\]
The Sobolev inequality and the uniform $L^2$ bound in
\eqref{main-estimate} therefore yield
\begin{align*}
&\|(\phi,\psi,\zeta)(t)\|_\infty^2
 \le C\|(\phi,\psi,\zeta)(t)\|_2 f(t)^{1/2}
 \longrightarrow0.
\end{align*}
This completes the proof of Theorem~\ref{theorem1}.

\section{Proof of Theorem~\ref{theorem2}}
\setcounter{equation}{0}

For the composite wave \eqref{ansatz}, the perturbation equations are
\begin{equation}\label{perturb2}
\left\{
\begin{aligned}
&\phi_t-\psi_x=0,\\
&\psi_t+(p-P)_x
 =\left(\frac{\mu u_x}{v}
                   -\frac{\mu_1 U_x}{V}\right)_x+F,\\
&c_\nu\zeta_t+pu_x-PU_x
 =\left(\frac{\kappa \theta_x}{v}
                   -\frac{\kappa_1 \Theta_x}{V}\right)_x
   +\frac{\mu u_x^2}{v}
                   -\frac{\mu_1 U_x^2}{V}+G,\\
&(\phi,\psi,\zeta)(x,0)=(\phi_0,\psi_0,\zeta_0)(x),\qquad
 (\phi,\psi,\zeta)(\pm\infty,t)=0.
\end{aligned}
\right.
\end{equation}
Here $P_\pm=R\Theta_\pm^r/V_\pm^r$ and
$p^m=R\theta_-^m/v_-^m=R\theta_+^m/v_+^m$. Moreover,
\begin{align*}
F&=(P_-+P_+-P)_x
       +\left(\frac{\mu_1 U_x}{V}\right)_x-U_t^{cd}
       =-\widetilde R_1,
\end{align*}
and
\begin{align*}
G&=(p^m-P)U_x^{cd}+(P_--P)(U_-^r)_x
                         +(P_+-P)(U_+^r)_x\\
 &\quad+\frac{\mu_1 U_x^2}{V}
 +\tilde\kappa\left(\frac{\Theta^\beta\Theta_x}{V}
             -\frac{(\Theta^{cd})^\beta\Theta_x^{cd}}{V^{cd}}\right)_x
 =-\widetilde R_2.
\end{align*}
\begin{lemma}\label{basic2}
Under \eqref{space}, for sufficiently small $\alpha,\delta$,
\begin{equation}\label{basic2-estimate}
\begin{aligned}
&\sup_{0\le t\le T}\int
 \left[\frac{\psi^2}{2}+R\Theta\Phi\left(\frac vV\right)
                 +c_\nu\Theta\Phi\left(\frac\theta\Theta\right)\right]dx\\
&\quad+\int_0^T\!\int
 \left(\frac{\mu \Theta}{v\theta}\psi_x^2
       +\frac{\kappa \Theta}{v\theta^2}\zeta_x^2\right)dxdt\\
&\quad+\int_0^T\!\int
 P\left[\Phi\left(\frac{\theta V}{\Theta v}\right)
                  +\gamma\Phi\left(\frac vV\right)\right]
          \bigl((U_-^r)_x+(U_+^r)_x\bigr)\,dxdt
 \le C.
\end{aligned}
\end{equation}
The constant $C$ depends only on the fixed data in
Theorem~\ref{theorem2}.
\end{lemma}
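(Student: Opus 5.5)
We follow the proof of Lemma~\ref{basic-lemma}, but for the composite profile we keep the rarefaction part of the quadratic term $Q$ with its sign. We multiply \eqref{perturb2}$_1$ by $-R\Theta(v^{-1}-V^{-1})$, \eqref{perturb2}$_2$ by $\psi$, and \eqref{perturb2}$_3$ by $\zeta/\theta$, and add. This yields an identity of the form \eqref{ba1}:
\[
\Bigl(\tfrac{\psi^2}{2}+R\Theta\Phi(\tfrac vV)+c_\nu\Theta\Phi(\tfrac\theta\Theta)\Bigr)_t
+\tfrac{\mu\Theta}{v\theta}\psi_x^2+\tfrac{\kappa\Theta}{v\theta^2}\zeta_x^2+H_x+Q=F\psi+G\tfrac\zeta\theta .
\]
Here $H$ is as in \eqref{H} with $p_+$ replaced by $P$. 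In $Q$, the terms linear in $U_x$ and $\Theta_t$ (coming from the pressure and the time derivative of the weight $\Theta$) combine, since $V_t=U_x$ and each rarefaction satisfies the Euler system (so $c_\nu(\Theta^r_\pm)_t=-P_\pm(U^r_\pm)_x$).

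For the rarefaction parts $(U^r_\pm)_x$, the standard computation of Huang--Li--Matsumura \cite{Huang-Li-matsumura} rewrites this combination exactly as
\[
P\Bigl[\Phi\bigl(\tfrac{\theta V}{\Theta v}\bigr)+\gamma\Phi\bigl(\tfrac vV\bigr)\Bigr]\bigl((U^r_-)_x+(U^r_+)_x\bigr).
\]
The resulting error is supported where the waves interact, or involves $P-P_\pm$, and is absorbed into $G$-type residuals. By Lemma~\ref{rare-pro}(1) this term is nonnegative, which is the third term of \eqref{basic2-estimate}. The contact part $U^{cd}_x$, together with $(p^m-P)U^{cd}_x$, is treated exactly as in Lemma~\ref{basic-lemma}: it is bounded by $C(M)(\phi^2+\zeta^2)(|U^{cd}_x|+|\Theta^{cd}_x|^2)$ and then controlled through Lemma~\ref{lemma5}, with \eqref{Q} giving $C(M)\delta$.

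The remaining parts of $Q$ are the viscous and heat cross terms. They involve $\bigl(\tfrac\mu v-\tfrac{\mu_1}{V}\bigr)\psi_xU_x$, $\tfrac{\kappa\zeta}{v\theta^2}\Theta_x\zeta_x$, and similar products. We bound them by $\tfrac14$ of the dissipation plus $C(M)(\phi^2+\zeta^2)(|U_x|^2+\Theta_x^2+|U_x|\,|\Theta_x|)$. The pointwise bounds in Lemma~\ref{lem:remaining-profile-integrals} split $|V_x|+|\Theta_x|+|U_x|$ into $C((U^r_-)_x+(U^r_+)_x)$ plus a Gaussian contact term.

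\begin{itemize}
\item For the rarefaction piece, $\|(U^r_\pm)_x\|_\infty\le C\delta$ by \eqref{profile:first-supremum}. Quadratic factors such as $(\phi^2+\zeta^2)(U^r_\pm)_x^2$ are then at most $C\delta(\phi^2+\zeta^2)(U^r_\pm)_x$.
\item Near the state $(V,\Theta)$ we have $\phi^2+\zeta^2\le C(M)[\Phi(\tfrac{\theta V}{\Theta v})+\gamma\Phi(\tfrac vV)]$. So for $C(M)\delta$ small this piece is absorbed into the new dissipation.
\item For the Gaussian piece we use Lemma~\ref{lemma5} as before. In the composite case its proof needs the extra rarefaction terms, which are likewise absorbed by the rarefaction dissipation. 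We therefore prove the Gaussian bound simultaneously, or use it with constant $C(M)$ and then small $\delta$.
\end{itemize}

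The source terms are handled as in \eqref{R1phi}--\eqref{R2zeta}, since $F=-\widetilde R_1$ and $G=-\widetilde R_2$. Pairing them with $\psi$ and $\zeta/\theta$ gives
\[
\iint|F\psi|+|G\zeta/\theta| \le C(M)\Bigl(\int_0^T\|(\widetilde R_1,\widetilde R_2)\|_1^{4/3}\Bigr)^{3/4}\Bigl(\int_0^T\|(\psi,\zeta)\|_\infty^4\Bigr)^{1/4}.
\]
The last factor is at most $C(M)$ by Gagliardo--Nirenberg together with \eqref{space}. By \eqref{profile:residual-integral} the whole bound is $C(M)\delta^{1/4}$; alternatively one can use the $L^2_{t,x}$ bound \eqref{profile:integrated-derivatives}. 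Integrating in $x$ and $t$ then gives the left side of \eqref{basic2-estimate} bounded by $C+C(M)\delta^{r}$, and choosing $C(M)\delta^r\le1$ finishes the proof. The coefficient bounds $\tilde\mu/2\le\mu\le2\tilde\mu$ come from $\alpha\log M\le\log2$, as in Lemma~\ref{basic-lemma}.

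The main obstacle is the algebraic step: exhibiting the exact $\Phi$-structure in the rarefaction term, and checking that every interaction remainder is either supported in the exponentially small regions of Lemma~\ref{rare-pro}(3) or has a $\delta$-small coefficient times the new dissipation. We would first write $p-P=P(\tfrac{\theta V}{\Theta v}-1)$ and expand $Q$ to second order in $(\tfrac vV,\tfrac\theta\Theta)$. We would then verify that the $(U^r)_x$ coefficient matches $P[\Phi(\tfrac{\theta V}{\Theta v})+\gamma\Phi(\tfrac vV)]$ using $c_\nu=R/(\gamma-1)$ and the isentropy of each rarefaction.
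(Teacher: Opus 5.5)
Your proposal is correct and follows the paper's proof. It uses the same multipliers and the exact identity that turns the $U_x$ and $\Theta_t$ terms into $P\bigl[\Phi\bigl(\frac{\theta V}{\Theta v}\bigr)+\gamma\Phi\bigl(\frac vV\bigr)\bigr]\bigl((U_-^r)_x+(U_+^r)_x\bigr)$ plus a contact part and exponentially small interaction remainders. It absorbs the rarefaction parts of the cross terms into this dissipation for $C(M)\delta$ small, and it proves a composite Gaussian estimate (the paper's \eqref{7.19}) with this dissipation on its right-hand side, which is then absorbed.

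Your handling of $F\psi+G\zeta/\theta$ by H\"older in time, with \eqref{profile:residual-integral} and the $L^4_tL^\infty_x$ bound from \eqref{space}, is a valid substitute for the paper's $L^1$-decay-plus-Gronwall argument. Drop the suggested alternative via the $L^2_{t,x}$ bound in \eqref{profile:integrated-derivatives}, though: under \eqref{space}, $\psi$ and $\zeta$ are not bounded in $L^2_{t,x}$ uniformly in $T$.
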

\begin{proof}
Multiplying \eqref{perturb2}$_2$ by $\psi$ gives
\begin{equation}\label{4.17}
\begin{aligned}
&\left(\frac{\psi^2}{2}\right)_t
 +\left[(p-P)\psi
       -\left(\frac{\mu u_x}{v}
                    -\frac{\mu_1 U_x}{V}\right)\psi\right]_x
 -\frac{R\zeta}{v}\psi_x\\
&\quad-R\Theta\left(\frac1v-\frac1V\right)\phi_t
 +\frac{\mu }v\psi_x^2
 +\left(\frac{\mu }v-\frac{\mu_1 }V\right)U_x\psi_x
 =F\psi.
\end{aligned}
\end{equation}
Multiplying \eqref{perturb2}$_3$ by $\zeta/\theta$ yields
\begin{equation}\label{4.18}
\begin{aligned}
&c_\nu\frac{\zeta\zeta_t}{\theta}
 -\left[\left(\frac{\kappa \theta_x}{v}
                    -\frac{\kappa_1 \Theta_x}{V}\right)
                     \frac\zeta\theta\right]_x
 +\frac{R\zeta}{v}\psi_x+(p-P)U_x\frac\zeta\theta\\
&\quad+\frac{\kappa \Theta}{v\theta^2}\zeta_x^2
 -\frac{\kappa \zeta\Theta_x\zeta_x}{v\theta^2}
 +\left(\frac{\kappa }v-\frac{\kappa_1 }V\right)
                    \frac{\Theta\Theta_x\zeta_x}{\theta^2}\\
&\quad-\left(\frac{\kappa }v-\frac{\kappa_1 }V\right)
                    \frac{\zeta\Theta_x^2}{\theta^2}
 -\frac{\mu \zeta}{v\theta}\psi_x^2
 -\frac{2\mu U_x\zeta\psi_x}{v\theta}\\
&\quad-\left(\frac{\mu }v-\frac{\mu_1 }V\right)
                    \frac{\zeta U_x^2}{\theta}
 =G\frac\zeta\theta.
\end{aligned}
\end{equation}
Since $V_t=U_x$ and $R\Theta_t^{cd}=p^mU_x^{cd}$,
\begin{align}
-R\Theta\left(\frac1v-\frac1V\right)\phi_t
 &=\left[R\Theta\Phi\left(\frac vV\right)\right]_t
   -R\Theta_t\Phi\left(\frac vV\right)
   +\frac{P\phi^2}{vV}U_x.\label{4.19}
\end{align}
Similarly,
\begin{align*}
\left[\Theta\Phi\left(\frac\theta\Theta\right)\right]_t
 &=\frac{\zeta\zeta_t}{\theta}
                         -\Theta_t\Phi\left(\frac\Theta\theta\right).
\end{align*}
Writing $\Theta_t=\Theta_t^{cd}+(\Theta_-^r)_t+(\Theta_+^r)_t$, the
contact identity $R\Theta_t^{cd}=p^mU_x^{cd}$ and the rarefaction
equations yield
\begin{align*}
-R\Theta_t
 &=(\gamma-1)P\bigl((U_-^r)_x+(U_+^r)_x\bigr)-p^mU_x^{cd}
 \\
 &\quad+(\gamma-1)(P_--P)(U_-^r)_x
             +(\gamma-1)(P_+-P)(U_+^r)_x.
\end{align*}
Consequently,
\begin{align}
&-R\Theta_t\Phi\left(\frac vV\right)
 +\frac{P\phi^2}{vV}U_x
 +c_\nu\Theta_t\Phi\left(\frac\Theta\theta\right)
 +(p-P)U_x\frac\zeta\theta\nonumber\\
&\qquad=Q_1\bigl((U_-^r)_x+(U_+^r)_x\bigr)+Q_2,\label{composite-Q}\\
Q_1
 &=P\left[(\gamma-1)\Phi\left(\frac vV\right)
       +\frac{\phi^2}{vV}-\Phi\left(\frac\Theta\theta\right)
       +\left(\frac{\theta V}{\Theta v}-1\right)
                         \left(1-\frac\Theta\theta\right)\right]
 \nonumber\\
 &=P\left[(\gamma-1)\Phi\left(\frac vV\right)
      +\frac vV+\frac{\theta V}{\Theta v}-2
                                      -\log\frac\theta\Theta\right]
 \nonumber\\
 &=P\left[\Phi\left(\frac{\theta V}{\Theta v}\right)
                       +\gamma\Phi\left(\frac vV\right)\right]\ge0,
 \label{composite-Q1}\\
Q_2
 &=U_x^{cd}\left[\frac{P\phi^2}{vV}
       -p^m\Phi\left(\frac vV\right)
       +\frac{p^m}{\gamma-1}\Phi\left(\frac\Theta\theta\right)
       +(p-P)\frac\zeta\theta\right]\nonumber\\
 &\quad+(\gamma-1)(P_--P)(U_-^r)_x
        \left[\Phi\left(\frac vV\right)
                -\frac1{\gamma-1}\Phi\left(\frac\Theta\theta\right)\right]
 \nonumber\\
 &\quad+(\gamma-1)(P_+-P)(U_+^r)_x
        \left[\Phi\left(\frac vV\right)
                -\frac1{\gamma-1}\Phi\left(\frac\Theta\theta\right)\right].
 \label{q2}
\end{align}
Adding \eqref{4.17} and \eqref{4.18}, and using
\eqref{4.19}--\eqref{q2}, we obtain
\begin{align}
&\frac d{dt}\int
 \left[\frac{\psi^2}{2}+R\Theta\Phi\left(\frac vV\right)
                  +c_\nu\Theta\Phi\left(\frac\theta\Theta\right)\right]dx
 \nonumber\\
&\quad+\int\left[\frac{\mu \Theta}{v\theta}\psi_x^2
                +\frac{\kappa \Theta}{v\theta^2}\zeta_x^2\right]dx
 +\int Q_1\bigl((U_-^r)_x+(U_+^r)_x\bigr)dx\nonumber\\
&=-\int Q_2\,dx
 +\int\left[-\left(\frac{\mu }v-\frac{\mu_1 }V\right)
                      +\frac{2\mu \zeta}{v\theta}\right]U_x\psi_x\,dx
 \nonumber\\
&\quad+\int\left(\frac{\mu }v-\frac{\mu_1 }V\right)
                                 \frac{\zeta U_x^2}{\theta}\,dx
 \nonumber\\
&\quad+\int\left[\frac{\kappa \zeta}{v\theta^2}
       -\left(\frac{\kappa }v-\frac{\kappa_1 }V\right)
                               \frac\Theta{\theta^2}\right]\Theta_x\zeta_x\,dx
 \nonumber\\
&\quad+\int\left(\frac{\kappa }v-\frac{\kappa_1 }V\right)
                              \frac{\zeta\Theta_x^2}{\theta^2}\,dx
 +\int\left(F\psi+G\frac\zeta\theta\right)dx.
 \label{2-basic}
\end{align}
The exponential interaction estimates in
$\Omega_c\cup\Omega_-\cup\Omega_+$ yield
\begin{align*}
&|(P_--P)(U_-^r)_x|\\
&\quad\le C\bigl(|\Theta^{cd}-\theta_-^m|
                 +|V^{cd}-v_-^m|
                 +|\Theta_+^r-\theta_+^m|
                 +|V_+^r-v_+^m|\bigr)|(U_-^r)_x|\\
&\quad\le C\delta^2e^{-c(|x|+t)},\\
&|(P_+-P)(U_+^r)_x|\le C\delta^2e^{-c(|x|+t)}.
\end{align*}
Inserting the bounds for $(P_--P)(U_-^r)_x$ and
$(P_+-P)(U_+^r)_x$ into \eqref{q2} and using
\eqref{space}, we obtain
\begin{align*}
&|Q_2|\le C(M)\delta
 \left[w^2
                              +e^{-c(|x|+t)}\right](\phi^2+\zeta^2).
\end{align*}
Since $c(M)(\phi^2+\zeta^2)\le Q_1\le C(M)(\phi^2+\zeta^2)$,
\begin{align*}
&(\phi^2+\zeta^2)(\Theta_x^2+U_x^2)\\
&\quad\le C\delta^2w^2(\phi^2+\zeta^2)
       +C(M)\delta Q_1\bigl((U_-^r)_x+(U_+^r)_x\bigr).
\end{align*}
Young's inequality, the derivative bounds in
Lemma~\ref{lem:remaining-profile-integrals}, and \eqref{composite-Q1} yield
\begin{align}
&\left|\left[-\left(\frac{\mu }v-\frac{\mu_1 }V\right)
             +\frac{2\mu \zeta}{v\theta}\right]U_x\psi_x\right|
 +\left|\left(\frac{\mu }v-\frac{\mu_1 }V\right)
                                    \frac{\zeta U_x^2}{\theta}\right|
 \nonumber\\
&\quad+\left|\left[\frac{\kappa \zeta}{v\theta^2}
       -\left(\frac{\kappa }v-\frac{\kappa_1 }V\right)
                      \frac\Theta{\theta^2}\right]\Theta_x\zeta_x\right|
 +\left|\left(\frac{\kappa }v-\frac{\kappa_1 }V\right)
                              \frac{\zeta\Theta_x^2}{\theta^2}\right|
 \nonumber\\
&\le \frac{\mu \Theta}{4v\theta}\psi_x^2
       +\frac{\kappa \Theta}{4v\theta^2}\zeta_x^2
       +C(M)(\phi^2+\zeta^2)(U_x^2+\Theta_x^2)\nonumber\\
&\le \frac{\mu \Theta}{4v\theta}\psi_x^2
       +\frac{\kappa \Theta}{4v\theta^2}\zeta_x^2
       +C(M)\delta^2(\phi^2+\zeta^2)w^2
       +C(M)\delta Q_1\bigl((U_-^r)_x+(U_+^r)_x\bigr).
 \label{tilde-q}
\end{align}

For the heat-conductivity term in $G$,
\begin{align}
G_3
 &=\tilde\kappa\left(\frac{\Theta^\beta\Theta_x}{V}
                -\frac{(\Theta^{cd})^\beta\Theta_x^{cd}}{V^{cd}}\right)_x
 \nonumber\\
 &=\tilde\kappa\left[
    \frac{(\Theta_-^r)^\beta(\Theta_-^r)_x
                       +(\Theta_+^r)^\beta(\Theta_+^r)_x}{V}\right]_x
 \nonumber\\
 &\quad+\tilde\kappa\left[
 (\Theta^{cd})^\beta\Theta_x^{cd}\left(\frac1V-\frac1{V^{cd}}\right)
 +\frac{\Theta^\beta-(\Theta^{cd})^\beta}{V}\Theta_x^{cd}\right.
 \nonumber\\
 &\hspace{36mm}\left.
 +\frac{\Theta^\beta-(\Theta_-^r)^\beta}{V}(\Theta_-^r)_x
 +\frac{\Theta^\beta-(\Theta_+^r)^\beta}{V}(\Theta_+^r)_x\right]_x
 =G_3^1+G_3^2.\label{G3}
\end{align}
The rarefaction derivative estimates and the exponential interaction
estimates yield
\begin{align}
|G_3^1|
 &\le C\sum_\pm\left[|(\Theta_\pm^r)_{xx}|
                        +|(\Theta_\pm^r)_x|^2\right]
   +C\sum_\pm|(\Theta_\pm^r)_x|
                   \left[|V_x^{cd}|+\sum_\pm|(V_\pm^r)_x|\right],
 \label{G31}\\
\|G_3^1(t)\|_1
 &\le C\min\{\delta,(1+t)^{-1}\}+C\delta^2e^{-ct},
 \label{g31c}\\
|G_3^2|
 &\le C\bigl(|\Theta_{xx}^{cd}|+|\Theta_x^{cd}||V_x^{cd}|
                            +|\Theta_x^{cd}|^2\bigr)
     \sum_\pm\bigl(|V_\pm^r-v_\pm^m|+|\Theta_\pm^r-\theta_\pm^m|\bigr)
 \nonumber\\
 &\quad+C|\Theta_x^{cd}|
          \sum_\pm\bigl(|(V_\pm^r)_x|+|(\Theta_\pm^r)_x|\bigr)
 \nonumber\\
 &\quad+C\sum_\pm\left[|(\Theta_\pm^r)_{xx}|
                    +|(\Theta_\pm^r)_x|
                       \bigl(|V_x|+|(\Theta_\pm^r)_x|\bigr)\right]
                      |\Theta-\Theta_\pm^r|
 \nonumber\\
 &\quad+C\sum_\pm|(\Theta_\pm^r)_x|
                       |\Theta_x-(\Theta_\pm^r)_x|
 \le C\delta^2e^{-c(|x|+t)}.\label{g32}
\end{align}
For the other terms, we use
\[
\left|\left(\frac{\mu_1 U_x}{V}\right)_x\right|
 \le C|U_{xx}|+C|U_x|\bigl(|V_x|+\alpha|\Theta_x|\bigr).
\]
Combining \eqref{G31}--\eqref{g32} with the derivative and interaction
estimates for the remaining components of $F$ and $G$, we obtain
\begin{align*}
&\|F(t)\|_1+\|G(t)\|_1\\
&\quad\le C\delta(1+t)^{-1}
          +C\min\{\delta,(1+t)^{-1}\}+C\delta^2e^{-ct}\\
&\quad\le C\delta^{1/8}(1+t)^{-7/8},
\end{align*}
where we used $\min\{\delta,(1+t)^{-1}\}\le\delta^{1/8}(1+t)^{-7/8}$.
For $\delta=0$, $F=G=0$. Sobolev's inequality gives
\begin{align}
&\left|\int_0^t\!\int\left(F\psi+G\frac\zeta\theta\right)dxds\right|
 \le C(M)\int_0^t(\|F\|_1+\|G\|_1)\|(\psi,\zeta)\|_\infty ds
 \nonumber\\
&\quad\le C(M)\delta^{1/8}\int_0^t(1+s)^{-7/8}
                \|(\psi,\zeta)\|_2^{1/2}
                \|(\psi_x,\zeta_x)\|_2^{1/2}ds\nonumber\\
&\quad\le\frac14\int_0^t\!\int
       \left(\frac{\mu \Theta}{v\theta}\psi_x^2
             +\frac{\kappa \Theta}{v\theta^2}\zeta_x^2\right)dxds
 \nonumber\\
&\qquad+C(M)\delta^{1/6}\int_0^t(1+s)^{-7/6}
       \left[1+\int\left(\psi^2+\Phi\left(\frac\theta\Theta\right)\right)dx\right]ds.
 \label{int-F}
\end{align}
Integrating \eqref{2-basic}, using \eqref{tilde-q} and \eqref{int-F},
and taking $C(M)\delta\le1/4$, we obtain
\begin{align*}
&\int\left[\frac{\psi^2}{2}+R\Theta\Phi\left(\frac vV\right)
                      +c_\nu\Theta\Phi\left(\frac\theta\Theta\right)\right](x,t)dx\\
&\quad+\frac12\int_0^t\!\int
       \left(\frac{\mu \Theta}{v\theta}\psi_x^2
             +\frac{\kappa \Theta}{v\theta^2}\zeta_x^2\right)dxds
       +\frac34\int_0^t\!\int
                   Q_1\bigl((U_-^r)_x+(U_+^r)_x\bigr)dxds\\
&\le C+C(M)\delta\int_0^t\!\int(\phi^2+\zeta^2)w^2\,dxds\\
&\quad+C(M)\int_0^t\left[\delta e^{-cs}
                                  +\delta^{1/6}(1+s)^{-7/6}\right]\\
&\qquad\quad\times\left[1+\int
   \left(\frac{\psi^2}{2}+R\Theta\Phi\left(\frac vV\right)
                   +c_\nu\Theta\Phi\left(\frac\theta\Theta\right)\right)dx\right]ds.
\end{align*}
For $C(M)\delta^{1/6}\le1$, Gronwall's inequality yields
\begin{equation}\label{7.18}
\begin{aligned}
&\sup_{0\le s\le t}\int
 \left[\frac{\psi^2}{2}+R\Theta\Phi\left(\frac vV\right)
                   +c_\nu\Theta\Phi\left(\frac\theta\Theta\right)\right](x,s)dx\\
&\quad+\int_0^t\!\int
       \left(\frac{\mu \Theta}{v\theta}\psi_x^2
             +\frac{\kappa \Theta}{v\theta^2}\zeta_x^2\right)dxds\\
&\quad+\int_0^t\!\int Q_1\bigl((U_-^r)_x+(U_+^r)_x\bigr)dxds\\
&\le C+C(M)\delta^{1/6}
       +C(M)\delta\int_0^t\!\int(\phi^2+\zeta^2)w^2\,dxds.
\end{aligned}
\end{equation}
For the composite wave, \eqref{relative-entropy-mode} and
\eqref{pressure-difference} read
\begin{align*}
&\partial_t\left[R\log\frac vV+c_\nu\log\frac\theta\Theta\right]\\
&\quad=\left[\frac{\kappa \theta_x}{v\theta}
                  -\frac{\kappa_1 \Theta_x}{V\Theta}\right]_x
       +\frac{\kappa \theta_x^2}{v\theta^2}
                  -\frac{\kappa_1 \Theta_x^2}{V\Theta^2}
       +\frac{\mu u_x^2}{v\theta}
                  -\frac{\mu_1 U_x^2}{V\Theta}+\frac G\Theta.
\end{align*}
For the pressure difference,
\begin{align*}
&(p-P)_t+\frac{\gamma p}{v}\psi_x\\
&\quad=\left\{\frac{\gamma-1}{v}
       \left[\frac{\kappa \theta_x}v
                       -\frac{\kappa_1 \Theta_x}V\right]\right\}_x
       +\frac{\gamma-1}{v^2}v_x
       \left[\frac{\kappa \theta_x}v
                       -\frac{\kappa_1 \Theta_x}V\right]\\
&\qquad+(\gamma-1)\left(\frac1v-\frac1V\right)
                    \left(\frac{\kappa_1 \Theta_x}V\right)_x
       +(\gamma-1)\left[\frac{\mu u_x^2}{v^2}
                       -\frac{\mu_1 U_x^2}{V^2}\right]
       -\gamma\left(\frac pv-\frac PV\right)U_x+\frac{\gamma-1}{V}G.
\end{align*}
The pointwise derivative decomposition in
Lemma~\ref{lem:remaining-profile-integrals}, together with the coercivity
in \eqref{composite-Q1}, yields
\begin{align*}
&\int_0^t\!\int(\phi^2+\zeta^2)(V_x^2+\Theta_x^2+U_x^2)\,dxds\\
&\quad\le C(M)\delta^2\int_0^t\!\int(\phi^2+\zeta^2)w^2\,dxds
       +C(M)\delta\int_0^t\!\int
                      Q_1\bigl((U_-^r)_x+(U_+^r)_x\bigr)dxds.
\end{align*}
For the velocity perturbation,
\begin{align*}
&\int_0^t\!\int\psi^2(V_x^2+\Theta_x^2+U_x^2)\,dxds\\
&\quad\le C\delta^2\int_0^t\!\int\psi^2w^2\,dxds
       +C\sup_{0\le s\le t}\|\psi(s)\|_2^2
                         \int_0^t\sum_\pm\|(U_\pm^r)_x\|_\infty^2ds\\
&\quad\le C\delta^2\int_0^t\!\int\psi^2w^2\,dxds+C(M)\delta.
\end{align*}
Consequently,
\begin{align*}
&\int_0^t\!\int
 \left[\left|\frac{\kappa \theta_x}v
                    -\frac{\kappa_1 \Theta_x}V\right|^2
       +\left|\frac{\mu u_x}v
                    -\frac{\mu_1 U_x}V\right|^2+|(p-P)_x|^2\right]dxds\\
&\quad\le C(M)\int_0^t\|(\phi_x,\psi_x,\zeta_x)\|_2^2ds
       +C(M)\delta^2\int_0^t\!\int(\phi^2+\zeta^2)w^2\,dxds\\
&\qquad+C(M)\delta\int_0^t\!\int
                      Q_1\bigl((U_-^r)_x+(U_+^r)_x\bigr)dxds.
\end{align*}
In the pressure and momentum calculation of Lemma~\ref{lemma5},
\begin{align*}
&\int_0^t\!\int\left|\left(\frac{\gamma p}{v}\right)_x\right|
                  \psi^2\frac{g(\sqrt2x,s)}{\sqrt{2(1+s)}}\,dxds\\
&\quad\le C(M)\int_0^t(1+s)^{-1/2}
           (\|\phi_x\|_2+\|\zeta_x\|_2)\|\psi\|_4^2ds
       +C(M)\delta\int_0^t\!\int\psi^2w^2\,dxds\\
&\qquad+C(M)\sup_{0\le s\le t}\|\psi(s)\|_2^2
            \int_0^t(1+s)^{-1/2}\min\{\delta,(1+s)^{-1}\}\,ds\\
&\quad\le C(M)+C(M)\delta\int_0^t\!\int\psi^2w^2\,dxds.
\end{align*}
The terms containing $\left(\kappa_1\Theta_x/V\right)_x$ and $U_x$ are
controlled by the time-integrable derivative bounds in
Lemma~\ref{lem:remaining-profile-integrals}:
\begin{align*}
&\int_0^t\!\int
 \left[\left|\left(\frac1v-\frac1V\right)
                    \left(\frac{\kappa_1 \Theta_x}V\right)_x\psi\right|
      +\left|\left(\frac pv-\frac PV\right)U_x\psi\right|\right]
                     \frac{g(\sqrt2x,s)}{\sqrt{2(1+s)}}\,dxds\\
&\quad\le C(M)\sup_{0\le s\le t}\|(\phi,\psi,\zeta)(s)\|_2^2
 \int_0^t(1+s)^{-1/2}
       \left[\left\|\left(\frac{\kappa_1 \Theta_x}V\right)_x\right\|_\infty
                         +\|U_x\|_\infty\right]ds\\
&\quad\le C(M)\int_0^\infty(1+s)^{-3/2}ds\le C(M).
\end{align*}
The estimate
$\|F(s)\|_1+\|G(s)\|_1\le C\delta^{1/8}(1+s)^{-7/8}$ gives
\begin{align*}
&\int_0^t\!\int(|G\psi|+|F(p-P)|)
                         \frac{g(\sqrt2x,s)}{\sqrt{2(1+s)}}\,dxds\\
&\quad\le C(M)\delta^{1/8}\int_0^\infty(1+s)^{-11/8}ds
 \le C(M)\delta^{1/8}.
\end{align*}
The one-dimensional Sobolev inequality applied to
$R\log(v/V)+c_\nu\log(\theta/\Theta)$ gives
\begin{align*}
&\int_0^t\left\|R\log\frac vV+c_\nu\log\frac\theta\Theta\right\|_\infty^4ds\\
&\quad\le C(M)\int_0^t
       \left\|\partial_x\left[R\log\frac vV
                                +c_\nu\log\frac\theta\Theta\right]\right\|_2^2ds\\
&\quad\le C(M)\left[1+\delta^2\int_0^t\!\int(\phi^2+\zeta^2)w^2\,dxds
           +\delta\int_0^t\!\int
                      Q_1\bigl((U_-^r)_x+(U_+^r)_x\bigr)dxds\right].
\end{align*}
H\"older's inequality, the $L^1$ decay of $G$, and the
$L^4(0,t;L^\infty)$ bound for
$R\log(v/V)+c_\nu\log(\theta/\Theta)$ give
\begin{align*}
&\left|\int_0^t\!\int\frac G\Theta
              \left[R\log\frac vV+c_\nu\log\frac\theta\Theta\right]g^2\,dxds\right|\\
&\quad\le C\left[\int_0^t\|G\|_1^{4/3}ds\right]^{3/4}
           \left[\int_0^t\left\|R\log\frac vV
                        +c_\nu\log\frac\theta\Theta\right\|_\infty^4ds\right]^{1/4}\\
&\quad\le C(M)+C(M)\delta^2\int_0^t\!\int(\phi^2+\zeta^2)w^2\,dxds
           +C(M)\delta\int_0^t\!\int
                      Q_1\bigl((U_-^r)_x+(U_+^r)_x\bigr)dxds.
\end{align*}
Applying the Gaussian multipliers of Lemma~\ref{lemma5} to the
logarithmic-entropy and pressure equations, and using the estimates for
$\frac{\kappa\theta_x}{v}-\frac{\kappa_1\Theta_x}{V}$,
$\frac{\mu u_x}{v}-\frac{\mu_1U_x}{V}$, $(p-P)_x$,
$\left(\frac{\kappa_1\Theta_x}{V}\right)_x$, $U_x$, $F$, and $G$, yields
\begin{align*}
&\int_0^t\!\int
 \left[\left(R\log\frac vV+c_\nu\log\frac\theta\Theta\right)^2
                         +(p-P)^2+\psi^2\right]w^2\,dxds\\
&\quad\le C(M)+C(M)(\delta+\delta^2)
                     \int_0^t\!\int(\phi^2+\psi^2+\zeta^2)w^2\,dxds\\
&\qquad+C(M)\delta\int_0^t\!\int
                      Q_1\bigl((U_-^r)_x+(U_+^r)_x\bigr)dxds.
\end{align*}
The uniform bounds for $v$, $\theta$, $V$, and $\Theta$ imply
\begin{align*}
&c(M)(\phi^2+\zeta^2)
 \le\left(R\log\frac vV+c_\nu\log\frac\theta\Theta\right)^2+(p-P)^2
  \le C(M)(\phi^2+\zeta^2),
\end{align*}
taking $C(M)(\delta+\delta^2)\le1/2$ gives
\begin{equation}\label{7.19}
\begin{aligned}
&\int_0^t\!\int(\phi^2+\psi^2+\zeta^2)w^2\,dxds\\
&\quad\le C(M)+C(M)\int_0^t\|(\phi_x,\psi_x,\zeta_x)\|_2^2ds
          +C(M)\int_0^t\!\int
                       Q_1\bigl((U_-^r)_x+(U_+^r)_x\bigr)dxds\\
&\quad\le C(M)+C(M)\int_0^t\!\int
                       Q_1\bigl((U_-^r)_x+(U_+^r)_x\bigr)dxds.
\end{aligned}
\end{equation}
Substituting \eqref{7.19} into \eqref{7.18} and absorbing the resulting
$Q_1$ term after reducing $\delta$ gives
\begin{align*}
&\sup_{0\le s\le t}\int
 \left[\frac{\psi^2}{2}+R\Theta\Phi\left(\frac vV\right)
                   +c_\nu\Theta\Phi\left(\frac\theta\Theta\right)\right](x,s)dx\\
&\quad+\int_0^t\!\int
       \left(\frac{\mu \Theta}{v\theta}\psi_x^2
             +\frac{\kappa \Theta}{v\theta^2}\zeta_x^2\right)dxds
       +\frac12\int_0^t\!\int
                   Q_1\bigl((U_-^r)_x+(U_+^r)_x\bigr)dxds\\
&\quad\le C+C(M)(\delta^{1/6}+\delta)\le C+1.
\end{align*}
Substituting the Gaussian estimate \eqref{7.19} into the relative-entropy
inequality \eqref{7.18} and absorbing the resulting $Q_1$ term establish
\eqref{basic2-estimate}.
\end{proof}

The additional pressure term in the volume estimate is
\begin{align}
(p-P)_x
 &=-p\frac{\tilde v_x}{\tilde v}+\frac{R\zeta_x}{v}
      -\frac{R\zeta\Theta_x}{v\Theta}
      +\left(\frac pP-1\right)P_x.\nonumber
\end{align}
Hence
\begin{align}
&\left|\int_0^T\!\int
 \frac{(p/P-1)P_x}{\mu }
                   \frac{\tilde v_x}{\tilde v}\,dxdt\right|
 \nonumber\\
&\quad\le\frac18\int_0^T\!\int\frac p{\mu }
                  \left|\frac{\tilde v_x}{\tilde v}\right|^2dxdt
       +C(M)\int_0^T\!\int P_x^2(\phi^2+\zeta^2)\,dxdt
 \nonumber\\
&\quad\le\frac18\int_0^T\!\int\frac p{\mu }
                  \left|\frac{\tilde v_x}{\tilde v}\right|^2dxdt
       +C(M)\delta^2\int_0^T\!\int
         w^2(\phi^2+\zeta^2)\,dxdt
 \nonumber\\
&\qquad+C(M)\delta\int_0^T\!\int
             \bigl((U_-^r)_x+(U_+^r)_x\bigr)(\phi^2+\zeta^2)\,dxdt
 \nonumber\\
&\quad\le\frac18\int_0^T\!\int\frac p{\mu }
                  \left|\frac{\tilde v_x}{\tilde v}\right|^2dxdt
          +C(M)\delta.\label{composite-pressure-estimate}
\end{align}
The derivative estimates for $(V_\pm^r,U_\pm^r,\Theta_\pm^r)$ give
\begin{align}
&\int_0^\infty\sum_\pm
 \left(\|(V_\pm^r)_x\|_\infty^2+\|(U_\pm^r)_x\|_\infty^2
       +\|(\Theta_\pm^r)_x\|_\infty^2
       +\|(\Theta_\pm^r)_t\|_\infty^2
       +\|(\Theta_\pm^r)_{xx}\|_\infty^2\right)dt\nonumber\\
&\quad\le C\int_0^\infty\min\{\delta,(1+t)^{-1}\}^2dt
 \le C\delta.\label{composite-profile-time}
\end{align}
Since $|w_\pm|\ge c>0$,
\begin{align*}
&\int_s^t(w_\pm)_x(x,r)\,dr
 =-\int_s^t\frac{(w_\pm)_t(x,r)}{w_\pm(x,r)}\,dr
 =\log\frac{|w_\pm(x,s)|}{|w_\pm(x,t)|}\le C\delta.
\end{align*}
The relations $(U_\pm^r)_x\le C(w_\pm)_x$ and
$\int_s^t(w_\pm)_x\,dr\le C\delta$ yield
\begin{align*}
&\int_s^t\bigl((U_-^r)_x+(U_+^r)_x\bigr)(x,r)\,dr
 \le C\int_s^t\bigl((w_-)_x+(w_+)_x\bigr)(x,r)\,dr
 \le C\delta.
\end{align*}
Combining $\int_s^t((U_-^r)_x+(U_+^r)_x)(x,r)\,dr\le C\delta$ with
$\int_s^t|U_x^{cd}(x,r)|\,dr\le C\delta$ and $v^{-1}\le M$, we obtain
\begin{align*}
&\left|\int_s^t\!\int_{k+1}^{k+2}\frac{U_x}{v}\,dxdr\right|
 \le M\int_{k+1}^{k+2}\int_s^t
          \left(|U_x^{cd}|+(U_-^r)_x+(U_+^r)_x\right)drdx
 \le C(M)\delta.
\end{align*}
Cauchy--Schwarz in time and \eqref{composite-profile-time} give
\begin{align*}
&\int_s^t\!\int_k^{k+2}
 \left(|u|\sum_\pm|(\Theta_\pm^r)_t|
                    +\sum_\pm|(\Theta_\pm^r)_x|\right)dxdr\\
&\quad\le C(M)\sqrt{t-s}
 \left[\int_s^t\sum_\pm
       \left(\|(\Theta_\pm^r)_t\|_\infty^2
                       +\|(\Theta_\pm^r)_x\|_\infty^2\right)dr\right]^{1/2}
 \le C(M)\delta^{1/2}\sqrt{t-s}.
\end{align*}
The terms containing $\psi_x$ and $\zeta_x$ satisfy
\begin{align*}
&\int_s^t\!\int_k^{k+2}
 \left(|\psi_x|\sum_\pm|(\Theta_\pm^r)_x|
                  +|\zeta_x|\sum_\pm|(U_\pm^r)_x|\right)dxdr\\
&\quad\le C\left[\int_s^t\!\int(\psi_x^2+\zeta_x^2)dxdr\right]^{1/2}
 \left[\int_s^t\sum_\pm
     \left(\|(\Theta_\pm^r)_x\|_\infty^2
                           +\|(U_\pm^r)_x\|_\infty^2\right)dr\right]^{1/2}
 \le C(M)\delta^{1/2}.
\end{align*}
Expanding $\Theta_xU_x$ into its contact and rarefaction contributions
and using their decay estimates gives
\begin{align*}
&\int_0^\infty\|\Theta_xU_x\|_\infty dt\\
&\quad\le C\int_0^\infty
 \left[\delta^2(1+t)^{-3/2}
       +\delta(1+t)^{-1/2}\min\{\delta,(1+t)^{-1}\}
       +\min\{\delta,(1+t)^{-1}\}^2\right]dt
 \le C\delta.
\end{align*}
Inserting the estimates for $U_x/v$, $(\Theta_\pm^r)_t$,
$(\Theta_\pm^r)_x$, $\psi_x$, $\zeta_x$, and $\Theta_xU_x$ into
\eqref{B} yields
\[
\left|\log\frac{B(x,t)}{B(x,s)}\right|
 \le C+\alpha C(M)(1+\sqrt{t-s}).
\]
Since
\begin{align*}
&\int_0^T\!\int\zeta^2\Theta_x^2\,dxdt
 \le C\delta^2\int_0^T\!\int\zeta^2w^2\,dxdt
       +C\delta\int_0^T\!\int
          \bigl((U_-^r)_x+(U_+^r)_x\bigr)\zeta^2\,dxdt
 \le C(M)\delta,
\end{align*}
The estimates for $B$, $U_x/v$, and $\zeta^2\Theta_x^2$ verify the
hypotheses of \eqref{Y-e}--\eqref{v-ka}; hence
\eqref{Y-e}--\eqref{v-ka} hold for \eqref{ansatz}.

The relative entropy in \eqref{basic2-estimate} controls the cutoff sets:
\[
\sup_{0\le t\le T}
 \left(|\Omega_2(t)|+|\{\theta<4^{-1/q}\Theta\}(t)|\right)\le C(q).
\]
Sobolev's inequality gives
\begin{align*}
&\int_0^T\|(\phi,\psi,\zeta)(t)\|_\infty^4dt
 \le C\sup_{0\le t\le T}\|(\phi,\psi,\zeta)(t)\|_2^2
               \int_0^T\|(\phi_x,\psi_x,\zeta_x)(t)\|_2^2dt
 \le C(M).
\end{align*}
Therefore,
\begin{align*}
&\int_0^T\!\int_{\Omega_2}\psi^2|\Theta_t|\,dxdt\\
&\quad\le C\delta\int_0^T\!\int\psi^2w^2\,dxdt\\
&\qquad+C\sup_{0\le t\le T}|\Omega_2(t)|
          \left[\int_0^T\|\psi\|_\infty^4dt\right]^{1/2}
          \left[\int_0^T\sum_\pm\|(\Theta_\pm^r)_t\|_\infty^2dt\right]^{1/2}
 \le C(M)(\delta+\delta^{1/2}).
\end{align*}
On $\Omega_2\cup\{\theta<4^{-1/q}\Theta\}$, splitting
$|U_x|+|\Theta_t|+|\Theta_{xx}|$ into contact and rarefaction
contributions gives
\begin{align*}
&\int_0^T\!\int_{\Omega_2\cup\{\theta<4^{-1/q}\Theta\}}
 (\phi^2+\psi^2+\zeta^2)(|U_x|+|\Theta_t|+|\Theta_{xx}|)\,dxdt\\
&\quad\le C\delta\int_0^T\!\int
                         (\phi^2+\psi^2+\zeta^2)w^2\,dxdt\\
&\qquad+C(q)\left[\int_0^T\|(\phi,\psi,\zeta)\|_\infty^4dt\right]^{1/2}\\*
&\qquad\quad\times\left[\int_0^T\sum_\pm
   \left(\|(U_\pm^r)_x\|_\infty^2
        +\|(\Theta_\pm^r)_t\|_\infty^2
        +\|(\Theta_\pm^r)_{xx}\|_\infty^2\right)dt\right]^{1/2}
 \le C(q,M)\delta^{1/2}.
\end{align*}
Replacing the single-contact coefficient estimates in \eqref{i3},
\eqref{i10}, \eqref{i202}, \eqref{beta-4}, and \eqref{psx21} by the
composite estimates for $\psi^2|\Theta_t|$ on $\Omega_2$ and for
$(\phi^2+\psi^2+\zeta^2)(|U_x|+|\Theta_t|+|\Theta_{xx}|)$ on
$\Omega_2\cup\{\theta<4^{-1/q}\Theta\}$ preserves
\eqref{i3}, \eqref{i10}, \eqref{i202}, \eqref{beta-4}, and \eqref{psx21}
on their respective integration domains.
For the residual terms in \eqref{long} and \eqref{5.5},
\begin{align*}
&\int_0^T\left[\|\psi(\zeta-\Theta)_+\|_\infty^4
       +\|(\zeta-\Theta)_+\|_\infty^4
       +\|\psi^2\mathbf1_{\Omega_2}\|_\infty^4
       +\||\psi|^b\psi\|_\infty^4\right]dt\\
&\quad\le C\left(1+\sup_{0\le t\le T}\|\psi(t)\|_\infty^{4(b+1)}\right)
       \int_0^T(\|\psi\|_\infty^4+\|\zeta\|_\infty^4)dt
       \le C(M).
\end{align*}
H\"older's inequality then yields
\begin{align*}
&\int_0^T\!\int
 \left[|F\psi|(\zeta-\Theta)_++|G|(\zeta-\Theta)_+
       +|G|\psi^2\mathbf1_{\Omega_2}+|F||\psi|^{b+1}\right]dxdt\\
&\quad\le C(M)\left[\int_0^T
                      (\|F\|_1+\|G\|_1)^{4/3}dt\right]^{3/4}\\
&\quad\le C(M)\left[\delta^{1/6}
                        \int_0^T(1+t)^{-7/6}dt\right]^{3/4}
 \le C(M)\delta^{1/8}.
\end{align*}
On $\{\theta<4^{-1/q}\Theta\}$, \eqref{beta-3} becomes
\begin{align*}
&\int_0^T\!\int_{\{\theta<4^{-1/q}\Theta\}}
                                  \theta^{-q}u_x^2\,dxdt\\
&\quad\le C(q)\int_0^T\!\int\frac{\mu }v
                         u_x^2(\theta^{-q}-4\Theta^{-q})_+dxdt
       +C(q)\int_0^T\!\int\theta^{-1}\psi_x^2\,dxdt\\
&\qquad+C(q,M)\int_0^T\!\int_{\{\theta<4^{-1/q}\Theta\}}U_x^2\,dxdt\\
&\quad\le C(q)\int_0^T\!\int\frac{\mu }v
                         u_x^2(\theta^{-q}-4\Theta^{-q})_+dxdt
                        +C(q)+C(q,M)\delta^r.
\end{align*}
Young's inequality gives
\begin{align*}
&\left|R\int_0^T\!\int\frac{\theta^{1-q}}v u_x
                              (1-4(\theta/\Theta)^q)_+dxdt\right|\\
&\quad\le\varepsilon\int_0^T\!\int_{\{\theta<4^{-1/q}\Theta\}}
                                  \theta^{-q}u_x^2\,dxdt\\
&\qquad+C(q,\varepsilon)\int_0^T
           \|(1-4(\theta/\Theta)^q)_+\|_\infty^2
           \int_{\{\theta<4^{-1/q}\Theta\}}\theta^{1-q}\,dxdt.
\end{align*}
Using $\psi_x=u_x-U_x$, we also have
\begin{align*}
&\int_0^T\!\int\theta^{-q}\psi_x^2\,dxdt
 \le 2\int_0^T\!\int_{\{\theta<4^{-1/q}\Theta\}}
                                      \theta^{-q}u_x^2\,dxdt
       +C(q)\int_0^T\!\int\theta^{-1}\psi_x^2\,dxdt
       +C(q,M)\delta^r.
\end{align*}
Thus \eqref{new-nergy} and \eqref{beta-new} hold with the composite state
\eqref{ansatz}.

Apply the arguments of Lemmas~\ref{huang-lemma}--\ref{txxtt} with
\eqref{basic2-estimate} in place of \eqref{basic},
\eqref{composite-pressure-estimate}, and the contact--rarefaction
derivative estimates to obtain
$0<c\le v(x,t),\theta(x,t)\le C$ and
\begin{align}
&\sup_{0\le t\le T}\|(\phi,\psi,\zeta)(t)\|_{H^1}^2
 +\int_0^T\left[\|\phi_x\|_2^2
      +\|(\psi_x,\zeta_x)\|_{H^1}^2
      +\|(\psi_t,\zeta_t)\|_2^2\right]dt\le C.\label{composite-H1}
\end{align}
The terms involving rarefaction derivatives are controlled by the
coercivity of $Q_1$ and \eqref{composite-profile-time}:
\begin{align*}
&\int_0^T\!\int(\phi^2+\zeta^2)
                 \sum_\pm|(U_\pm^r)_x|^2\,dxdt
 \le C\delta\int_0^T\!\int
                 Q_1\bigl((U_-^r)_x+(U_+^r)_x\bigr)dxdt,\\
&\int_0^T\!\int\psi^2\sum_\pm|(U_\pm^r)_x|^2\,dxdt
 \le\sup_{0\le t\le T}\|\psi(t)\|_2^2
       \int_0^T\sum_\pm\|(U_\pm^r)_x\|_\infty^2dt\le C\delta.
\end{align*}
For the residual terms,
\begin{align*}
&\int_0^T\!\int(|F\psi_{xx}|+|G\zeta_t|)\,dxdt\\
&\quad\le\frac18\int_0^T\!\int
       \left(\frac{\mu }v\psi_{xx}^2+c_\nu\zeta_t^2\right)dxdt
       +C\int_0^T\!\int(F^2+G^2)\,dxdt\\
&\quad\le\frac18\int_0^T\!\int
       \left(\frac{\mu }v\psi_{xx}^2+c_\nu\zeta_t^2\right)dxdt
       +C\delta^r.
\end{align*}
For the Gaussian-weighted calculation, decomposing $V_x$, $\Theta_x$,
and $U_x$ into their contact and rarefaction parts gives
\begin{align}
&\int_0^T\!\int(\phi^2+\zeta^2)(V_x^2+\Theta_x^2+U_x^2)\,dxdt
 \nonumber\\
&\quad\le C\delta^2\int_0^T\!\int
        w^2(\phi^2+\zeta^2)\,dxdt
        +C\delta\int_0^T\!\int
                   Q_1\bigl((U_-^r)_x+(U_+^r)_x\bigr)dxdt,
 \nonumber\\
&\int_0^T\!\int\psi^2(V_x^2+\Theta_x^2+U_x^2)\,dxdt
 \nonumber\\
&\quad\le C\delta^2\int_0^T\!\int
                w^2\psi^2\,dxdt+C\delta.
 \label{composite-gaussian-errors}
\end{align}
Insert \eqref{composite-H1} and \eqref{composite-gaussian-errors} into
the Gaussian multiplier inequality used to derive \eqref{7.19} to obtain
\begin{equation}\label{composite-gaussian-final}
\int_0^T\!\int w^2
                    (\phi^2+\psi^2+\zeta^2)\,dxdt\le C.
\end{equation}
Choose $M$ larger than four times the constants in
\eqref{basic2-estimate}, \eqref{composite-H1} and
\eqref{composite-gaussian-final}, and then
choose $0<\varepsilon_0,\delta_0\le1$ so that
$\varepsilon_0\log M\le\log2$ and
$C(M)(\varepsilon_0+\delta_0^r)\le c$.
Since $c(\phi^2+\zeta^2)\le Q_1\le C(\phi^2+\zeta^2)$,
\begin{align*}
&\int_0^T\!\int
           \bigl((U_-^r)_x+(U_+^r)_x\bigr)(\phi^2+\zeta^2)\,dxdt
 \le C\int_0^T\!\int Q_1\bigl((U_-^r)_x+(U_+^r)_x\bigr)dxdt
 \le C.
\end{align*}
The continuation criterion following \eqref{local0:class}, together with
\eqref{basic2-estimate}, \eqref{composite-H1}, and
\eqref{composite-gaussian-final}, extends the local solution to all
$t\geq0$ and yields \eqref{main-estimate}. Letting $T\to\infty$ in
\eqref{basic2-estimate} gives \eqref{main-rarefaction-dissipation}.
For $f(t)=\|\phi_x(t)\|_2^2+\|\psi_x(t)\|_2^2+\|\zeta_x(t)\|_2^2$,
the derivative identity used in the proof of Theorem~\ref{theorem1} and
\eqref{composite-H1} give $f,f'\in L^1(0,\infty)$; the Sobolev inequality
therefore yields \eqref{behavior-1}.
The bounds for $F$, $G$, the Gaussian terms, and the higher-order terms
depend on the wave amplitudes only through
$\delta=\delta^{r_1}+\delta^{cd}+\delta^{r_3}$; hence the argument also
covers the case in which any component strength vanishes.

\end{document}